\newcolumntype{M}[1]{>{\centering\arraybackslash}m{#1}}
\newcolumntype{L}[1]{>{\raggedright\let\newline\\\arraybackslash\hspace{0pt}}m{#1}}
\newcolumntype{C}[1]{>{\centering\let\newline\\\arraybackslash\hspace{0pt}}m{#1}}
\newcolumntype{R}[1]{>{\raggedleft\let\newline\\\arraybackslash\hspace{0pt}}m{#1}}
\newcommand\abs[1]{\left\lvert#1\right\rvert}
\newcommand\floor[1]{\left\lfloor#1\right\rfloor}
\DeclareMathOperator\forb{forb}
\DeclareMathOperator\Avoid{Avoid}
\newtheorem{thm}{Theorem}[section]
\newtheorem{lemma}[thm]{Lemma}
\newtheorem{prop}[thm]{Proposition}
\newtheorem{conj}[thm]{Conjecture}
\theoremstyle{definition}
\newtheorem{definition}[thm]{Definition}
\newtheorem{question}[thm]{Question}
\begin{document}
\title{An intermediate case of exponential multivalued forbidden matrix configurations}
\author{Wallace Peaslee\thanks{Supported by a UK Engineering and Physical Sciences Research Council Doctoral Training Partnership grant.}  \footnotemark[4]\\University of Cambridge\\Department of Applied Mathematics and Theoretical Physics\\Cambridge, United Kingdom
\and Attila Sali\thanks{Research was partially supported by the National Research, Development and Innovation Office (NKFIH) grants K--116769 and SNN-135643. This work was also supported by the BME-Artificial Intelligence FIKP grant of EMMI (BME FIKP-MI/SC) and by the Ministry of Innovation and Technology and the National Research, Development and Innovation Office within the Artificial Intelligence National Laboratory of Hungary.}  \footnotemark[4] \\HUN-REN Alfr\'ed R\'enyi Institute of Mathematics and\\Department of Computer Science, BUTE\\ Budapest, Hungary
\and Jun Yan\thanks{Supported by the Warwick Mathematics Institute Centre for Doctoral Training, and by funding from the UK EPSRC (Grant number: EP/W523793/1).}  \thanks{Research conducted under the auspices of the Budapest Semesters in Mathematics program.}\\University of Warwick\\ Mathematics Institute\\ Coventry, United Kingdom}
\maketitle
\begin{abstract}
The \textit{forbidden number} $\forb(m,F)$, which denotes the maximum number of distinct columns in an $m$-rowed $(0,1)$-matrix with no submatrix that is a row and column permutation of $F$, has been widely studied in extremal set theory. Recently, this function was extended to $r$-matrices, whose entries lie in $\{0,1,\cdots,r-1\}$. $\forb(m,r,F)$ is the maximum number of distinct columns in an $r$-matrix with no submatrix that is a row and column permutation of $F$. While $\forb(m,F)$ is polynomial in $m$, $\forb(m,r,F)$ is exponential for $r\geq 3$. Recently, $\forb(m,r,F)$ was studied for some small $(0,1)$-matrices $F$, and exact values were determined in some cases. In this paper we study $\forb(m,r,M)$ for $M=\begin{bmatrix}0&1\\0&1\\1&0\end{bmatrix}$, which is the smallest matrix for which this forbidden number is unknown. Interestingly, it turns out that this problem is closely linked with the following optimisation problem. For each triangle in the complete graph $K_m$, pick one of its edges. Let $m_e$ denote the number of times edge $e$ is picked. For each $\alpha\in\mathbb{R}$, what is $H(m,\alpha)=\max\sum_{e\in E(K_m)}\alpha^{m_e}$? We establish a relationship between $\forb(m,r,M)$ and $H(m,(r-1)/(r-2))$, find upper and lower bounds for $H(m,\alpha)$, and use them to significantly improve known bounds for $\forb(m,r,M)$. 
\end{abstract}

\section{Introduction}
For every matrix $A$, let $\abs{A}$ denote the number of columns in $A$. A matrix $A$ is called \textit{simple} if all of its $\abs{A}$ columns are pairwise distinct. We say a matrix $A$ contains a \textit{configuration} of a matrix $F$, and denote it by $F \prec A$, if some submatrix of $A$ is a row and column permutation of $F$. If $A$ does not contain a configuration of $F$, we say that $A$ \textit{avoids} $F$ and denote this as $F \not \prec A$.

An $r$-matrix is a matrix whose entries are contained in the set $\{0,1,\ldots,r-1\}$. Define $\Avoid(m,r,F)$ to be the set of all $m$-rowed simple $r$-matrices that avoid $F$. The forbidden configuration problem studies the following extremal function defined for a collection $\mathcal{F}$ of matrices. 
\begin{equation*}
    \forb(m,r,\mathcal{F}) = \max \left \{ \abs{A}\colon A\in\Avoid(m,r,F)\text{ for every }F\in\mathcal{F}\right \}
\end{equation*}
We say $A\in\Avoid(m,r,\mathcal{F})$ is \textit{extremal} if $|A|=\forb(m,r,\mathcal{F})$. When $r=2$ we usually write $\forb(m,{\mathcal F})$ in place of $\forb(m,2,{\mathcal F})$. We also use $\forb(m,r,F)$ instead of the more cumbersome $\forb(m,r,\{F\})$.

The basic result in the theory of forbidden configurations is Sauer's Theorem (also proved by Perles and Shelah \cite{Shelah}, and Vapnik and Chervonenkis \cite{VC}). 
\begin{thm}[\cite{Sauer}]\label{Sauer's Thm} For every positive integer $k$, let $K_k$ denote the $k\times 2^k$ matrix containing every possible $(0,1)$-column exactly once. Then for every positive integer $m$, we have
\[
\forb(m,K_k) = \binom{m}{k-1} + \binom{m}{k-2} + \cdots + \binom{m}{1} + \binom{m}{0}.
\]
\end{thm}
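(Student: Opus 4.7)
The plan is to prove the formula by induction on $m$ (and on $k$) combined with an explicit matching construction for the lower bound.

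For the lower bound, I would take $A$ to be the matrix whose columns are the characteristic vectors of all subsets $S\subseteq\{1,\ldots,m\}$ with $|S|\leq k-1$. This matrix is simple, has exactly $\sum_{i=0}^{k-1}\binom{m}{i}$ columns, and avoids $K_k$: a configuration of $K_k$ on some $k$ rows would in particular exhibit the all-ones column on those $k$ rows, forcing some column of $A$ to have support of size at least $k$, a contradiction.

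For the upper bound, I would induct on $m$, with base cases $k=1$ (since any two distinct columns differ in some row, so $\forb(m,K_1)=1=\binom{m}{0}$) and $m=k-1$ (every simple matrix has at most $2^m=\sum_{i=0}^{m}\binom{m}{i}$ columns and trivially avoids $K_k$ since it does not have enough columns). For the inductive step, take $A\in\Avoid(m,K_k)$ simple and split its columns according to the entry in the last row. Let $U$ be the simple $(m-1)$-rowed matrix whose columns are all distinct \emph{column-prefixes} of $A$ (columns with the last row deleted), and let $D$ be the simple $(m-1)$-rowed matrix consisting of those column-prefixes that occur in $A$ both with last entry $0$ and with last entry $1$. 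The key counting identity is $|A|=|U|+|D|$, since a prefix appearing in both halves of $A$ contributes once to $U$ and once to $D$, while a prefix appearing in only one half contributes only to $U$. Clearly $U\in\Avoid(m-1,K_k)$, so by induction $|U|\leq \sum_{i=0}^{k-1}\binom{m-1}{i}$.

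The crucial observation is that $D\in\Avoid(m-1,K_{k-1})$: if $K_{k-1}\prec D$ on some $k-1$ rows, each of the $2^{k-1}$ columns of that configuration would extend in $A$ to two columns (with last entries $0$ and $1$ respectively), and the resulting $2^k$ columns, on those $k-1$ rows together with the last row, would form a configuration of $K_k$ in $A$, a contradiction. Hence $|D|\leq \sum_{i=0}^{k-2}\binom{m-1}{i}$, and Pascal's identity $\binom{m-1}{i}+\binom{m-1}{i-1}=\binom{m}{i}$ combines the two bounds into $|A|\leq \sum_{i=0}^{k-1}\binom{m}{i}$, matching the lower bound.

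The main obstacle is conceptual rather than technical: spotting the right decomposition $|A|=|U|+|D|$ and recognising that the ``doubly-occurring'' prefixes in $D$ are exactly the ones on which the forbidden configuration drops from $K_k$ to $K_{k-1}$. Once the auxiliary matrices $U$ and $D$ are identified, the induction closes cleanly via Pascal's identity and no further combinatorial work is required.
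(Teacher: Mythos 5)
The paper states Sauer's Theorem as a known result with a citation to \cite{Sauer} (and to Perles--Shelah and Vapnik--Chervonenkis), so there is no in-paper proof to compare against. Your argument is a correct rendition of the standard inductive proof: the lower-bound construction is right, and the $|A|=|U|+|D|$ decomposition with the observation that $D$ avoids $K_{k-1}$, closed via Pascal's identity, is exactly the classical argument. One small wording slip: in the base case $m=k-1$ you say $A$ ``does not have enough \emph{columns}'' to contain $K_k$, but the reason is that $A$ does not have enough \emph{rows} (a $K_k$ configuration requires $k$ rows and $m=k-1<k$); this does not affect the correctness of the argument.
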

The matrix $K_k$ is also called the complete $2$-matrix. Note that for fixed $k$, $\forb(m,K_k)$ is polynomial in $m$. Alon \cite{ALON} proved a generalisation of this result for complete $r$-matrices $K_k^{r}$, which contain every possible $(0,1,\cdots,r-1)$-column exactly once. $\forb(m,r,K_k^r)$ turns out to be exponential in $m$ when $r>2$. This is a special case of a more general dichotomy phenomenon proved by F\"uredi and Sali \cite{FS}. In what follows, an \textit{$(i,j)$-matrix} is a matrix whose entries are either $i$ or $j$.

\begin{thm}[\cite{FS}]
Let ${\mathcal F}$ be a family of $r$-matrices. If for every pair $i,j\in\{0,1,\cdots,r-1\}$, there is an $(i,j)$-matrix in ${\mathcal F}$, then $\forb(m,r,{\mathcal F}) = O(m^k)$ for some positive integer $k$. If ${\mathcal F}$ has no $(i,j)$-matrix for some pair $i,j \in \{0,1,\cdots,r-1\}$, then $\forb(m,r,{\mathcal F}) = \Omega(2^m)$.
\end{thm}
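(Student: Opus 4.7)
The lower bound is a direct construction. If $\mathcal{F}$ contains no $(i,j)$-matrix for some $i \neq j$, let $A$ be the $m$-rowed simple $r$-matrix whose $2^m$ columns are all $(i,j)$-vectors of length $m$. Every submatrix of $A$ is itself an $(i,j)$-matrix, so no $F \in \mathcal{F}$ can appear as a configuration in $A$, giving $\forb(m,r,\mathcal{F}) \geq 2^m$.

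For the upper bound, fix for each pair $i<j$ in $\{0,1,\ldots,r-1\}$ an $(i,j)$-matrix $F_{ij} \in \mathcal{F}$ with $k_{ij}$ rows, and set $K = \max_{i<j} k_{ij}$. Given $A \in \Avoid(m,r,\mathcal{F})$, associate to each column $c$ of $A$ and each pair $i<j$ the \emph{$(i,j)$-projection} $\chi_{ij}(c) \in \{i,j,*\}^m$, which equals $c_k$ on rows where $c_k \in \{i,j\}$ and equals $*$ elsewhere. The tuple $(\chi_{ij}(c))_{i<j}$ determines $c$ completely, since for any row $k$ the entry $c_k = v$ can be recovered from any pair $(i,j)$ with $v \in \{i,j\}$. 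Therefore $|A|$ is at most the product, over the $\binom{r}{2}$ pairs $i<j$, of the number of distinct values taken by $\chi_{ij}$ on columns of $A$.

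The core of the argument is then to show that for each pair $(i,j)$ the number of distinct $\chi_{ij}$-projections is polynomially bounded, specifically $O(m^{k_{ij}-1})$. The key observation is that if some $k_{ij}$-subset $R$ of rows were \emph{$(i,j)$-shattered} by $A$, meaning every vector $v \in \{i,j\}^R$ arises as $c|_R$ for some column $c$ of $A$, then $A$ would contain the complete $(i,j)$-matrix on $k_{ij}$ rows as a configuration on $R$, and hence $F_{ij}$ as a configuration (since $F_{ij}$ has $k_{ij}$ rows and only $(i,j)$-entries), contradicting $A \in \Avoid(m,r,F_{ij})$. Given this no-shattering property, the required polynomial count follows from a Sauer--Shelah-type inequality for $\{i,j,*\}$-valued vectors: any family of distinct vectors in $\{i,j,*\}^m$ with no $(i,j)$-shattered $k$-subset has size $O(m^{k-1})$. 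This can be proved by a shifting/compression argument mirroring Sauer's original proof, or equivalently by encoding each $\chi_{ij}(c)$ as an ordered pair $(I_c, J_c)$ of disjoint subsets of $[m]$ and applying a trace bound (such as Pajor's inequality) to the resulting system.

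I expect the main obstacle to be this Sauer--Shelah-type bound for $\{i,j,*\}$-valued vectors, since the classical statement applies only to $(0,1)$-matrices and the intermediate value $*$ requires care in the shifting step. Granted this bound, combining with the injectivity of $c \mapsto (\chi_{ij}(c))_{i<j}$ yields $|A| = O\bigl(m^{\binom{r}{2}(K-1)}\bigr)$, which is $O(m^k)$ with $k = \binom{r}{2}(K-1)$, completing the proof of both directions of the dichotomy.
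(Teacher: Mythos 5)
The paper cites this theorem from F\"uredi and Sali without supplying a proof, so there is no in-paper argument to compare against; I therefore assess your proposal on its own.

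Your lower bound is correct: the matrix of all $2^m$ columns over $\{i,j\}$ is simple, every submatrix is an $(i,j)$-matrix, and since $\mathcal{F}$ contains none, it avoids $\mathcal{F}$.

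The upper bound has a genuine gap, and it is precisely at the step you flag as the ``main obstacle.'' The Sauer--Shelah-type inequality you invoke --- that any family of distinct vectors in $\{i,j,*\}^m$ with no $(i,j)$-shattered $k$-subset has size $O(m^{k-1})$ --- is simply false. Take the family of all $2^m$ vectors in $\{i,*\}^m$. No vector ever takes the value $j$, so no $\{i,j\}$-pattern involving a $j$ is ever realised on any row subset; in particular no $k$-subset is $(i,j)$-shattered for any $k\geq 1$. Yet the family has size $2^m$, far exceeding $O(m^{k-1})$. Encoding into pairs of disjoint subsets and applying Pajor's inequality does not rescue this, because the $*$-coordinate genuinely creates a third state that the dichotomous trace bound does not control.

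The deeper problem is structural: you are trying to bound each factor $|\chi_{ij}(A)|$ using only the constraint that $A$ avoids $F_{ij}$, but that single constraint does not suffice. Concretely, if $A$ consists of all $\{i,2\}^m$-columns for some value $2\notin\{i,j\}$, then $A$ trivially avoids any $F_{ij}$ that contains at least one $j$ entry, yet $|\chi_{ij}(A)|=2^m$. Of course such an $A$ violates the constraint coming from $F_{i2}$, but your argument never uses that cross-pair interaction when bounding the $(i,j)$-factor. The factorisation $|A|\leq\prod_{i<j}|\chi_{ij}(A)|$ is valid, but the factors cannot each be controlled in isolation. A correct proof must exploit the simultaneous presence of an $(i,j)$-matrix in $\mathcal{F}$ for \emph{every} pair, for instance by an inductive standard decomposition on rows or by an argument that couples the pairs. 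As a secondary (and repairable) point, $R$ being $(i,j)$-shattered gives the complete simple $(i,j)$-matrix $K_{k_{ij}}^{ij}$ as a configuration, which contains $F_{ij}$ only if $F_{ij}$ is simple; if $F_{ij}$ has repeated columns one needs a shattering notion with multiplicity, which affects constants but not the polynomial order.
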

The function $\forb(m,{\mathcal F})$ has been investigated extensively, and the reader is suggested to consult the dynamic survey of Anstee \cite{survey}. However, the investigation of the general case of $r$-matrices (for $r>2$) has just started. Earlier papers mainly focused on giving conditions that ensure polynomial bounds, such as  \cite{ANSTEE2023113492,ELLIS202024}. 

A result of Dillon and Sali \cite{DillonSali2021_ExponentialMultivalued} calculates $\forb(m,r, F)$ exactly for all $r\geq 3$ and all $3 \times 2$ and $3 \times 3$ simple $(0,1)$-matrices except for $M = \begin{bmatrix}0&1\\0&1\\1&0\end{bmatrix}$, which they bound by:
\begin{equation}\label{eq:trivi-lower-upper}
    m(r-1)^{m-1} + (r-1)^m \leq \forb(m, r, M) \leq \frac{3}{2}m(r-1)^{m-1} + (r-1)^m
\end{equation}
The lower bound in (\ref{eq:trivi-lower-upper}) follows from the observation that the matrix consisting of all columns with at most one 0 contains no configuration of $M$, while the upper bound follows from $\forb(m,M)=\floor{\frac{3m}2}+1$ and a general upper bound theorem of Dillon and Sali.
The aim of this paper is to study $\forb(m, r, M)$ and find better lower and upper bounds for them. 

It turns out that the value $\forb(m,r,M)$ is closely linked with the answer to the following optimisation problem. 

\begin{definition}\label{tcmdef}
A multigraph $\mathcal{G}$ on vertex set $V$ is called a \textit{triangular choice multigraph (TCM)} if it can be obtained as follows. Start with the empty graph on $V$. For every unordered triple $u,v,w$ of distinct vertices in $V$, choose exactly one of edge $uv,uw,vw$ and add a copy of it to the multigraph. In particular, the total number of edges in $\mathcal{G}$, counting multiplicities, is $\binom{|V|}{3}$.

A triangular choice multigraph $\mathcal{G}$ on $V$ is \textit{2-recursive} if $|V|=1,2$ and $\mathcal{G}$ is the empty graph, or $|V|\geq3$ and there exists a partition $V=V_1\cup V_2$, with $V_1,V_2\not=\emptyset$, and multigraphs $\mathcal{G}_1, \mathcal{G}_2, \mathcal{G}_3$ such that 
\begin{itemize}
\item $\mathcal{G}=\mathcal{G}_1\cup\mathcal{G}_2\cup\mathcal{G}_3$, where the union is taken with edge multiplicity.
\item $\mathcal{G}_1$ is a 2-recursive TCM on $V_1$ and $\mathcal{G}_2$ is a 2-recursive TCM on $V_2$.
\item $\mathcal{G}_3$ is obtained by adding a copy of edge $uv$ for every unordered triple $u,v,w\in V$ with $u,v\in V_1$ and $w\in V_2$, and adding a copy of edge $vw$ for every unordered triple $u,v,w\in V$ with $u\in V_1$ and $v,w\in V_2$.
\end{itemize}
\end{definition}

\begin{figure}[t]
\begin{subfigure}[b]{0.9\linewidth}
    \centering
    \begin{tikzpicture}
    \draw 
        (0, 4) node[circle, black, draw](a){1}
        (0, 2) node[circle, black, draw](b){2}
        (0, 0) node[circle, black, draw](c){3}
        (2, 1) node[circle, black, draw](d){4}
        (2, 3) node[circle, black, draw](e){5};

        \draw [-] (a) to node {} (b);
        \draw [-] (a) to [bend right=20] node {} (b);
        \draw [-] (a) to [bend left=20] node {} (b);

        \draw [-] (a.220) to [bend right=30] node {} (c.120);
        \draw [-] (a.210) to [bend right=60] node {} (c.130);

        \draw [-] (b) to node {} (c);
        \draw [-] (b) to [bend left=20] node {} (c);

        \draw [-] (d) to node {} (e);
        \draw [-] (d) to [bend right=20] node {} (e);
        \draw [-] (d) to [bend left=20] node {} (e);

    \end{tikzpicture}

\end{subfigure}\\
\newline

\begin{subfigure}[b]{\textwidth}
\centering
\begin{tabular}{|l|L{0.05\textwidth}|L{0.05\textwidth}|L{0.05\textwidth}|L{0.06\textwidth}|L{0.06\textwidth}|L{0.05\textwidth}|L{0.05\textwidth}|L{0.05\textwidth}|L{0.05\textwidth}|L{0.05\textwidth}|}
\hline
\makecell{Vertex\\Triplet} & 1,2,3 & 1,2,4 & 1,2,5 & 1,3,4 & 1,3,5 & 1,4,5 & 2,3,4 & 2,3,5 & 2,4,5 & 3,4,5 \\
\hline 
Subgraph &
\begin{subfigure}[b]{0.05\textwidth}
    \centering
    \medskip
    \begin{tikzpicture}
    \draw 
    (0, 0.8) node[circle, red, draw, scale = 0.4](a){1}
    (0, 0.4) node[circle, red, draw, scale = 0.4](b){2}
    (0, 0) node[circle, red, draw, scale = 0.4](c){3}
    (0.5, 0.6) node[circle, black, draw, scale = 0.4](d){4}
    (0.5, 0.2) node[circle, black, draw, scale = 0.4](e){5};
    \draw [-] (a) to node[below] {} (b);
    \end{tikzpicture}
\end{subfigure} &
\begin{subfigure}[b]{0.05\textwidth}
    \centering
    \medskip
    \begin{tikzpicture}
    \draw 
    (0, 0.8) node[circle, red, draw, scale = 0.4](a){1}
    (0, 0.4) node[circle, red, draw, scale = 0.4](b){2}
    (0, 0) node[circle, black, draw, scale = 0.4](c){3}
    (0.5, 0.6) node[circle, red, draw, scale = 0.4](d){4}
    (0.5, 0.2) node[circle, black, draw, scale = 0.4](e){5};
    \draw [-] (a) to node[below] {} (b);
    \end{tikzpicture}
\end{subfigure} &
\begin{subfigure}[b]{0.05\textwidth}
    \centering
    \medskip
    \begin{tikzpicture}
    \draw 
    (0, 0.8) node[circle, red, draw, scale = 0.4](a){1}
    (0, 0.4) node[circle, red, draw, scale = 0.4](b){2}
    (0, 0) node[circle, black, draw, scale = 0.4](c){3}
    (0.5, 0.6) node[circle, black, draw, scale = 0.4](d){4}
    (0.5, 0.2) node[circle, red, draw, scale = 0.4](e){5};
    \draw [-] (a) to node[below] {} (b);
    \end{tikzpicture}
\end{subfigure} &
\begin{subfigure}[b]{0.06\textwidth}
    \centering
    \medskip
    \begin{tikzpicture}
    \draw 
    (0, 0.8) node[circle, red, draw, scale = 0.4](a){1}
    (0, 0.4) node[circle, black, draw, scale = 0.4](b){2}
    (0, 0) node[circle, red, draw, scale = 0.4](c){3}
    (0.5, 0.6) node[circle, red, draw, scale = 0.4](d){4}
    (0.5, 0.2) node[circle, black, draw, scale = 0.4](e){5};
    \draw [-] (a) to [bend right=40] node[below] {} (c);
    \end{tikzpicture}
\end{subfigure} &
\begin{subfigure}[b]{0.06\textwidth}
    \centering
    \medskip
    \begin{tikzpicture}
    \draw 
    (0, 0.8) node[circle, red, draw, scale = 0.4](a){1}
    (0, 0.4) node[circle, black, draw, scale = 0.4](b){2}
    (0, 0) node[circle, red, draw, scale = 0.4](c){3}
    (0.5, 0.6) node[circle, black, draw, scale = 0.4](d){4}
    (0.5, 0.2) node[circle, red, draw, scale = 0.4](e){5};
    \draw [-] (a) to [bend right=40] node[below] {} (c);
    \end{tikzpicture}
\end{subfigure} &
\begin{subfigure}[b]{0.05\textwidth}
    \centering
    \medskip
    \begin{tikzpicture}
    \draw 
    (0, 0.8) node[circle, red, draw, scale = 0.4](a){1}
    (0, 0.4) node[circle, black, draw, scale = 0.4](b){2}
    (0, 0) node[circle, black, draw, scale = 0.4](c){3}
    (0.5, 0.6) node[circle, red, draw, scale = 0.4](d){4}
    (0.5, 0.2) node[circle, red, draw, scale = 0.4](e){5};
    \draw [-] (d) to node[below] {} (e);
    \end{tikzpicture}
\end{subfigure} &
\begin{subfigure}[b]{0.05\textwidth}
    \centering
    \medskip
    \begin{tikzpicture}
    \draw 
    (0, 0.8) node[circle, black, draw, scale = 0.4](a){1}
    (0, 0.4) node[circle, red, draw, scale = 0.4](b){2}
    (0, 0) node[circle, red, draw, scale = 0.4](c){3}
    (0.5, 0.6) node[circle, red, draw, scale = 0.4](d){4}
    (0.5, 0.2) node[circle, black, draw, scale = 0.4](e){5};
    \draw [-] (b) to node[below] {} (c);
    \end{tikzpicture}
\end{subfigure} &
\begin{subfigure}[b]{0.05\textwidth}
    \centering
    \medskip
    \begin{tikzpicture}
    \draw 
    (0, 0.8) node[circle, black, draw, scale = 0.4](a){1}
    (0, 0.4) node[circle, red, draw, scale = 0.4](b){2}
    (0, 0) node[circle, red, draw, scale = 0.4](c){3}
    (0.5, 0.6) node[circle, black, draw, scale = 0.4](d){4}
    (0.5, 0.2) node[circle, red, draw, scale = 0.4](e){5};
    \draw [-] (b) to node[below] {} (c);
    \end{tikzpicture}
\end{subfigure} &
\begin{subfigure}[b]{0.05\textwidth}
    \centering
    \medskip
    \begin{tikzpicture}
    \draw 
    (0, 0.8) node[circle, black, draw, scale = 0.4](a){1}
    (0, 0.4) node[circle, red, draw, scale = 0.4](b){2}
    (0, 0) node[circle, black, draw, scale = 0.4](c){3}
    (0.5, 0.6) node[circle, red, draw, scale = 0.4](d){4}
    (0.5, 0.2) node[circle, red, draw, scale = 0.4](e){5};
    \draw [-] (d) to node[below] {} (e);
    \end{tikzpicture}
\end{subfigure} &
\begin{subfigure}[b]{0.05\textwidth}
    \centering
    \medskip
    \begin{tikzpicture}
    \draw 
    (0, 0.8) node[circle, black, draw, scale = 0.4](a){1}
    (0, 0.4) node[circle, black, draw, scale = 0.4](b){2}
    (0, 0) node[circle, red, draw, scale = 0.4](c){3}
    (0.5, 0.6) node[circle, red, draw, scale = 0.4](d){4}
    (0.5, 0.2) node[circle, red, draw, scale = 0.4](e){5};
    \draw [-] (d) to node[below] {} (e);
    \end{tikzpicture}
\end{subfigure}\\
\hline
\end{tabular}
\end{subfigure}
    
    \caption{An example triangular choice multigraph $\mathcal{G}$ on 5 vertices, which is 2-recursive with $V_1 = \{1,2,3\}$ and $V_2 = \{4,5\}$. All 10 vertex triplets are enumerated in the table with the chosen edges indicated.}
    \label{fig:ExampleTCM}
\end{figure}

\begin{question}\label{triangleproblem}
Let $\mathcal{G}$ be a triangular choice multigraph on $[m]$. For every $ij\in[m]^{(2)}$, let $m^{\mathcal{G}}_{ij}$ denote the multiplicity of the edge $ij$, which is also the number of unordered triples $i,j,k$ in which the edge $ij$ is chosen. For every $\alpha\in\mathbb{R}$, define 
\begin{align*}
w(\mathcal{G},\alpha)&=\sum_{xy\in[m]^{(2)}}\alpha^{m^{\mathcal{G}}_{xy}},\\ 
H(m, \alpha)&=\max\left\{w(\mathcal{G},\alpha)\colon\mathcal{G}\text{ is a TCM on }[m]\right\},\\
H_{2}(m, \alpha)&=\max\left\{w(\mathcal{G},\alpha)\colon\mathcal{G}\text{ is a 2-recursive TCM on }[m]\right\}.    
\end{align*}
We say a TCM $\mathcal{G}$ is \textit{extremal} if, depending on context, it satisfies $w(\mathcal{G},\alpha)=H(m,\alpha)$ or $w(\mathcal{G},\alpha)=H_2(m,\alpha)$. What are the values of $H(m, \alpha)$ and $H_{2}(m,\alpha)$? What do the extremal graphs look like?
\end{question}

Our first result relates $\forb(m,r,M)$ to $H(m,\frac{r-1}{r-2})$ and $H_{2}(m,\frac{r-1}{r-2})$. 
\begin{thm}\label{main}
For every $m\geq0$, $r\geq3$,
$$H_{2}\left(m,\frac{r-1}{r-2}\right)\cdot(r-2)^{m-2}\leq\forb(m, r, M)-(r-1)^m-m(r-1)^{m-1}\leq H\left(m,\frac{r-1}{r-2}\right)\cdot(r-2)^{m-2}.$$
\end{thm}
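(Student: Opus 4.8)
The first step is to replace $\forb(m,r,M)$ by a combinatorial problem on $\{0,1,\ast\}$-strings. Assign to a column $c$ of an $r$-matrix its \emph{pattern} $p(c)\in\{0,1,\ast\}^m$ with $p(c)_i=0$ if $c_i=0$, $p(c)_i=1$ if $c_i=1$, and $p(c)_i=\ast$ otherwise; write $z(p),o(p)\subseteq[m]$ for the sets of $0$- and $1$-coordinates and $s(p)$ for the number of $\ast$'s. Since the entries of $M$ are $0$'s and $1$'s, whether two columns realise a configuration of $M$ depends only on their patterns, and I would show that patterns $p,q$ \emph{conflict} (some three rows carry a copy of $M$) exactly when $\min\{\abs{z(p)\cap o(q)},\abs{o(p)\cap z(q)}\}\ge 1$ and $\max\{\abs{z(p)\cap o(q)},\abs{o(p)\cap z(q)}\}\ge 2$. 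Hence a simple $r$-matrix with $m$ rows avoids $M$ iff its set of patterns is conflict-free, and for any conflict-free $\mathcal P\subseteq\{0,1,\ast\}^m$ the matrix containing all $(r-2)^{s(p)}$ columns of each $p\in\mathcal P$ is simple and $M$-free, so
\[
\forb(m,r,M)=\max\Big\{\textstyle\sum_{p\in\mathcal P}(r-2)^{s(p)}:\ \mathcal P\subseteq\{0,1,\ast\}^m\text{ conflict-free}\Big\}.
\]
A pattern conflicts with nothing as soon as $z(p)=\emptyset$, $o(p)=\emptyset$, or $\abs{z(p)}=\abs{o(p)}=1$ (call it \emph{safe}), and the patterns with $\abs{z(p)}\le 1$, which are pairwise conflict-free, are realised by exactly the $(r-1)^m+m(r-1)^{m-1}$ columns with at most one $0$. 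So in both directions we may take $\mathcal P$ to contain all safe patterns, and $\forb(m,r,M)-(r-1)^m-m(r-1)^{m-1}$ equals a fixed closed-form quantity $\binom m2(r-2)^{m-2}-\sum_{k\ge 3}(k-1)\binom mk(r-2)^{m-k}$ (the weight of all safe patterns, minus $(r-1)^m+m(r-1)^{m-1}$) plus the maximum weight of a pairwise conflict-free set $\mathcal D$ of \emph{dangerous} patterns (those with $\abs{z(p)},\abs{o(p)}\ge 1$ and $\abs{z(p)}+\abs{o(p)}\ge 3$, each of weight at most $(r-2)^{m-3}$). Everything now reduces to controlling $\mathcal D$.

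For the lower bound I would build, for each partition $[m]=V_1\sqcup V_2$ and each choice of conflict-free families $\mathcal P_1,\mathcal P_2$ on $V_1,V_2$ realising $H_2(\abs{V_1},\alpha)$ and $H_2(\abs{V_2},\alpha)$ (with $\alpha=\tfrac{r-1}{r-2}$), a conflict-free family $\mathcal P$ on $[m]$ of weight
\[
(r-1)^{\abs{V_2}}\!\!\sum_{p_1\in\mathcal P_1}\!\!(r-2)^{s(p_1)}+(r-1)^{\abs{V_1}}\!\!\sum_{p_2\in\mathcal P_2}\!\!(r-2)^{s(p_2)}-(r-1)^m+\abs{V_1}\abs{V_2}(r-2)^{m-2},
\]
which, substituting the inductive values of the inner sums, equals $(r-1)^m+m(r-1)^{m-1}+\big(\alpha^{\abs{V_2}}w(\mathcal G_1,\alpha)+\alpha^{\abs{V_1}}w(\mathcal G_2,\alpha)+\abs{V_1}\abs{V_2}\big)(r-2)^{m-2}$ — precisely the recursion for a $2$-recursive TCM $\mathcal G=\mathcal G_1\cup\mathcal G_2\cup\mathcal G_3$ built from Definition~\ref{tcmdef}. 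Maximising over the partition gives the lower bound of Theorem~\ref{main}, the base cases $m\le2$ being immediate. Concretely $\mathcal P$ is a recursive combination: each pattern is assembled from a $V_1$-pattern in $\mathcal P_1$ and a $V_2$-pattern in $\mathcal P_2$, together with the $\abs{V_1}\abs{V_2}(r-2)^{m-2}$ extra patterns having one $0$ in each part and $\ast$ everywhere else. The real content is choosing the rules that govern where $0$'s and (especially) $1$'s may appear across the two parts so that no cross-part conflict is created — a naive product is \emph{not} conflict-free — while keeping the weight as above; using the explicit conflict criterion from Step~1, this becomes a finite verification carried out by induction on $m$.

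The upper bound is the crux. Let $\mathcal P$ be extremal, containing all safe patterns, with dangerous part $\mathcal D$; it suffices to produce a TCM $\mathcal G$ on $[m]$ with $\sum_{p\in\mathcal P}(r-2)^{s(p)}\le(r-1)^m+m(r-1)^{m-1}+w(\mathcal G,\alpha)(r-2)^{m-2}$, since $H(m,\alpha)=\max_{\mathcal G}w(\mathcal G,\alpha)$. Dividing by $(r-2)^{m-2}$, writing $w(\mathcal G,\alpha)=\binom m2+\sum_{e}(\alpha^{m_e}-1)$ and expanding each $\alpha^{m_e}-1$ in powers of $(r-2)^{-1}$, one reduces to the clean target: for every $i\ge 1$,
\[
\#\{p\in\mathcal D:\ \abs{z(p)}+\abs{o(p)}=i+2\}\ \le\ \sum_{e\in[m]^{(2)}}\binom{m_e}{i}+(i+1)\binom{m}{i+2},
\]
where $m_e$ is the multiplicity of $e$ in $\mathcal G$. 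For $i=1$ the right side is $3\binom m3$ for every TCM and the bound follows from the local fact that among the six dangerous patterns supported on a fixed triple at most three are pairwise conflict-free; for $i\ge2$ the TCM must be chosen carefully. The plan is: for each triple pick the edge of $\mathcal G$ according to which of that triple's three "doubled-pair" conflict constraints $\mathcal P$ makes binding, then charge each dangerous $p$ — using a pair of its $0$-coordinates (or $1$-coordinates) as the relevant edge and the remaining $i$ support-coordinates as an index set — injectively either to a pair (edge, $i$-subset of the triples using that edge), which feeds $\sum_e\binom{m_e}{i}$, or to one of the $i+1$ "local" slots of an $(i+2)$-set. The obstacle, where essentially all the difficulty of Theorem~\ref{main} lies, is that dangerous patterns with \emph{different} supports can conflict — the conflict relation couples patterns across all of $[m]$ — so the per-triple edge choice cannot be made in isolation: it must be orchestrated so that simultaneously $\mathcal G$ is a genuine TCM (exactly one edge per triple), the charging map is injective, and the displayed counts line up.
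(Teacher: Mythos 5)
Your reformulation in terms of $\{0,1,\ast\}$-patterns and the conflict criterion is correct (it is equivalent to the paper's decomposition of valid columns by support), the safe/dangerous bookkeeping checks out, and your ``clean target'' $\#\{p\in\mathcal D:\abs{z(p)}+\abs{o(p)}=i+2\}\le\sum_{e}\binom{m_e}{i}+(i+1)\binom{m}{i+2}$ is a valid sufficient reduction --- indeed it is exactly what the paper establishes through Lemma \ref{boundonX} and Lemma \ref{rewritesum}. But both central steps of your plan are left unproven, and they are where essentially all of the content of Theorem \ref{main} lies. For the upper bound you yourself say the per-triple edge choice ``must be orchestrated'' so that $\mathcal G$ is a genuine TCM and the charging map is injective; that orchestration is precisely what the paper supplies by associating to an extremal matrix a choice $\mathcal B$ (one forbidden $3\times3$ matrix per triple, forced by extremality), encoding it as the $0$-implication digraph $\mathcal D_{\mathcal B}(X)$, and proving $c(\mathcal B,X)\le n(\mathcal B,X)+\abs{X}+1$ via an analysis of strongly connected components. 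Your plan also tacitly assumes every triple has a binding constraint of ``doubled-pair'' shape; when the two missing columns on a triple form $I$ or $I^c$, no single edge of a TCM captures the constraint, and the paper needs the whole block/special-block argument of Section \ref{reduction} (Theorem \ref{reduce}) to handle that case --- nothing in your proposal addresses it.

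The lower bound has the same status: you compute the weight the recursive family should have (the arithmetic matches the $2$-recursive recursion), but the actual construction --- ``the rules that govern where $0$'s and $1$'s may appear across the two parts'' --- is the missing ingredient, and it is not a finite verification, since the rules must be specified and proved conflict-free for every $m$. In the paper this is the uniformly directed good choice built from the split: for triples $i<j\le m'<k$ the $0$-implications are $i\to k$ and $j\to k$, and for $i\le m'<j<k$ they are $i\to j$ and $i\to k$; then all strongly connected components of every $\mathcal D_{\mathcal B}(X)$ are singletons, Lemma \ref{boundonX} holds with equality, and Lemma \ref{rewritesum} converts the count into $H_2(m,\frac{r-1}{r-2})(r-2)^{m-2}$. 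So what you have is a correct outer reduction; the two key arguments --- producing a valid TCM with the stated counting property for an arbitrary extremal family (including the $I,I^c$ case), and the explicit conflict-free recursive construction --- are genuinely absent.
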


Our second result is the following upper bound on $H(m,2)$, which we then use to find an upper bound for $\forb(m,r,M)$ for all $r\geq 3$.
\begin{thm}\label{thm:upperbds}
For every $m\geq 0$, $H(m,2)\leq\frac{83}{192}m2^{m-1}$.
\end{thm}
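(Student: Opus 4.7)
The plan is to prove the bound by induction on $m$, combining a vertex-removal identity with a structural bound on an auxiliary sum. Fix a TCM $\mathcal{G}$ on $[m]$ and a vertex $v\in[m]$, and let $\mathcal{G}-v$ denote the sub-TCM on $[m]\setminus\{v\}$ obtained by retaining only the choices of triangles avoiding $v$. For each edge $e$ with $v\notin e$, the two multiplicities differ by at most one: $m_e^{\mathcal{G}} = m_e^{\mathcal{G}-v} + \mathbbm{1}[\text{triangle } e\cup\{v\} \text{ chose } e]$. A short double-counting argument then yields the identity
\[
\sum_{v\in[m]} w(\mathcal{G}-v,2) \;=\; (m-2)\,w(\mathcal{G},2) \;-\; \tfrac{1}{2}\,P(\mathcal{G}),
\]
where $P(\mathcal{G}) := \sum_e m_e\cdot 2^{m_e}$. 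Invoking the inductive bound $w(\mathcal{G}-v,2)\leq H(m-1,2)$ term by term delivers the master recursion
\[
w(\mathcal{G},2) \;\leq\; \frac{m}{m-2}\,H(m-1,2) \;+\; \frac{P(\mathcal{G})}{2(m-2)}.
\]

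Substituting the trivial bound $P(\mathcal{G})\leq (m-2)\,w(\mathcal{G},2)$ (from $m_e\leq m-2$) into this recursion only gives $w(\mathcal{G},2) \leq \tfrac{2m}{m-2}H(m-1,2)$, which iterated is too weak by a factor of $m$ to even yield a linear-in-$m$ bound, let alone the target constant $\tfrac{83}{192}$ (improving on the Dillon--Sali-equivalent $\tfrac{1}{2} = \tfrac{96}{192}$). So one must obtain a much sharper bound on $P(\mathcal{G})$, equivalently a lower bound on the slack $R(\mathcal{G}) := (m-2)\,w(\mathcal{G},2) - P(\mathcal{G}) = \sum_e (m-2-m_e)\cdot 2^{m_e}$. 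The central structural fact to exploit is that at most $\lfloor m/2\rfloor$ edges can simultaneously attain multiplicity $m-2$: if two edges share a vertex then their common triangle can be chosen by at most one of them, so both cannot be saturated. This forces nontrivial slack into the multiplicity distribution of any TCM.

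The base cases $H(2,2)=1$, $H(3,2)=4$, and $H(4,2)=12$ are straightforward to verify by exhaustive enumeration of TCMs on at most four vertices, and all lie comfortably within the target; $H(4,2)=12$ is the tightest relative to $\tfrac{83}{192}\,m\cdot 2^{m-1}$ and likely pins down the numerical constant. The main obstacle I anticipate is obtaining a lower bound on $R(\mathcal{G})$ sharp enough to close the induction precisely at $\tfrac{83}{192}$: a constant-factor improvement on $P(\mathcal{G})$ will not suffice, so the proof likely propagates a strengthened inductive invariant combining an upper bound on $w(\mathcal{G},2)$ with a companion lower bound on $R(\mathcal{G})$ (or more generally on some higher moment of $\{m_e\}$). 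An alternative I would explore in parallel is to combine the master recursion with the $4$-vertex averaging inequality $\sum_{|S|=4} w(\mathcal{G}|_S,2) \leq H(4,2)\binom{m}{4} = 12\binom{m}{4}$, which applied to every $4$-vertex restriction supplies the missing structural constraint on the distribution of the $m_e$'s and, I expect, is what ultimately forces the numerator $83$.
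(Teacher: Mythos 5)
Your vertex-removal identity is correct: for $e=xy$ with $v\notin e$, one has $m^{\mathcal{G}-v}_e = m^{\mathcal{G}}_e - \mathbbm{1}[xyv\text{ chose }e]$, and summing $2^{m^{\mathcal{G}-v}_e}$ over the $m-2$ choices of $v$ gives $(m-2)2^{m_e}-\tfrac12 m_e 2^{m_e}$, so indeed $\sum_v w(\mathcal{G}-v,2)=(m-2)w(\mathcal{G},2)-\tfrac12 P(\mathcal{G})$. You also correctly diagnose that $P(\mathcal{G})\leq(m-2)w(\mathcal{G},2)$ is far too weak: the recursion it yields, $H(m,2)\leq\tfrac{2m}{m-2}H(m-1,2)$, loses a polynomial factor upon iteration. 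But this is where the proposal stops: you never produce a bound on $P(\mathcal{G})$ (equivalently a lower bound on the slack $R(\mathcal{G})$) that is sharp enough to close the induction, and the two remedies you float are both unproved speculations. The observation that at most $\lfloor m/2\rfloor$ edges can have multiplicity $m-2$ is true, but on its own it only shaves the count of saturated edges and does nothing to the bulk of the multiplicity distribution. The proposed $4$-vertex averaging inequality $\sum_{|S|=4}w(\mathcal{G}|_S,2)\leq 12\binom m4$ is not a valid tool as stated: restriction of a TCM on $[m]$ to a $4$-set $S$ does not give a TCM on $S$ with edge weights related to $m_e^{\mathcal{G}}$ in a way that makes that sum meaningful, and there is no reason the paper's constant $\tfrac{83}{192}$ should arise from the $4$-vertex data (in the paper it arises from $\tfrac{5}{12}+\sum_{j\geq 7}2^{-j}=\tfrac{80}{192}+\tfrac{3}{192}$, not from $H(4,2)$). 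So the proposal is an honest setup with a genuine unfilled gap, not a proof.

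The paper takes an entirely different structural route. It introduces \emph{closed sets}: $S\subseteq[m]$ is closed if every triangle $xyz$ with $x,y\in S$, $z\notin S$ chooses $xy$. Maximal closed sets partition $[m]$ (Lemma \ref{disjointclosedset}). The key structural reduction (Proposition \ref{prop:sizeatleast2}) shows, via a sequence of local edge-choice swaps that never decrease $w$, that some extremal TCM has all its maximal closed sets of size $2$ or $3$; in particular every block has size at least $2$. Given the partition into blocks of sizes $a_1,\dots,a_k\geq 2$, the weight splits into intra-block contributions, each bounded by $H(a_i,2)2^{m-a_i}$, and inter-block contributions, which Lemma \ref{vertexcontribution} bounds by a majorization argument (Karamata's inequality) in terms of the $a_i$. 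Summing and normalizing gives $h(m)\leq\max\{\sum_i\tfrac{a_i}{m}(h(a_i)+\tfrac23 2^{-a_i})\}$ over partitions with parts $\geq 2$, and a final induction establishing $h(m)+\tfrac23 2^{-m}\leq\tfrac{5}{12}+\varepsilon(m)$ with $\varepsilon(m)=\sum_{j=7}^m 2^{-j}$ yields $h(m)<\tfrac{83}{192}$. None of this appears in, or follows from, your vertex-removal framework. If you want to pursue your approach, the missing piece is a proof that $R(\mathcal{G})$ grows quadratically (roughly $\sum_e(m-2-m_e)2^{m_e}\gtrsim c\, m^2 2^{m-2}$ for extremal $\mathcal{G}$), and it is unclear whether your identity alone can deliver that without importing structural information of the kind the paper extracts through closed sets.
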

\begin{thm}\label{thm:generalupperbds}
For every $m\geq0$ and $r\geq 3$, we have $\forb(m,r,M)\leq\left(1+\frac{83}{192}\right)m(r-1)^{m-1}+(r-1)^m$.
\end{thm}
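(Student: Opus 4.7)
The strategy is to use the upper bound half of Theorem~\ref{main}, which gives
\[
\forb(m,r,M) - (r-1)^m - m(r-1)^{m-1} \leq H\!\left(m,\tfrac{r-1}{r-2}\right)(r-2)^{m-2},
\]
and then to bound the right-hand side by $\tfrac{83}{192}m(r-1)^{m-1}$. Writing $\alpha=(r-1)/(r-2)$, so that $r-1=\alpha/(\alpha-1)$ and $r-2=1/(\alpha-1)$, the required inequality rearranges to
\[
H(m,\alpha) \;\leq\; \tfrac{83}{192}\cdot m\cdot\tfrac{\alpha^{m-1}}{\alpha-1}. \qquad (\star)
\]
For $r=3$ we have $\alpha=2$ and $(r-2)^{m-2}=1$, so $(\star)$ specialises to $H(m,2)\leq\tfrac{83}{192}m\,2^{m-1}$, which is exactly Theorem~\ref{thm:upperbds}; the theorem is immediate in this case.

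For $r\geq 4$, one has $\alpha=(r-1)/(r-2)\leq 3/2$, and the naive monotonicity bound $H(m,\alpha)\leq H(m,2)$ fed into Theorem~\ref{main} is too weak: a short calculation shows $2^{m-1}(r-2)^{m-2}>(r-1)^{m-1}$ whenever $r\geq 4$ and $m\geq 4$, so the $\alpha=2$ bound alone does not close the gap. The plan is therefore to reprove Theorem~\ref{thm:upperbds} with the parameter $\alpha$ in place of the constant $2$ throughout, establishing $(\star)$ for every $\alpha$ of the form $(r-1)/(r-2)$ with $r\geq 3$. I would anticipate that the proof of Theorem~\ref{thm:upperbds} proceeds by induction on $m$ via either a vertex-deletion step or a $2$-recursive split of an extremal TCM, at each stage bounding the cross-edge contribution against the contribution from smaller pieces. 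The identities involved are all of the form $\alpha\cdot\alpha^k=\alpha^{k+1}$, and the target $m\alpha^{m-1}/(\alpha-1)$ has the correct scaling: when one passes from an $(m{-}1)$-vertex bound to an $m$-vertex bound, the factor $\alpha$ supplied by each new edge multiplicity matches the ratio $\alpha^{m-1}/\alpha^{m-2}$, while the denominator $\alpha-1$ is $m$-independent and so simply carries through.

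The main obstacle is to verify that the constant $\tfrac{83}{192}$ is preserved across the full range $\alpha\in\{(r-1)/(r-2):r\geq 3\}=\{2,\tfrac32,\tfrac43,\ldots\}$. Two regions are delicate. First, for $\alpha$ close to $1$ (large $r$), the bound $\alpha^{m-1}/(\alpha-1)$ blows up, and the extremal TCM may shift from one dominated by a few high-multiplicity edges (as one expects at $\alpha=2$) toward something closer to uniform over all $\binom{m}{2}$ edges; one must check that the case analysis used for $\alpha=2$ still controls these regimes. Second, the numerical optimisations that yield $\tfrac{83}{192}$ at $\alpha=2$ must be redone with $\alpha$ as a parameter and shown to be maximised at $\alpha=2$, so that the $r=3$ case remains the worst. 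If both of these hold, $(\star)$ follows uniformly in $r\geq 3$ and, combined with Theorem~\ref{main}, yields Theorem~\ref{thm:generalupperbds}.
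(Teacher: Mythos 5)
Your proposal handles $r=3$ correctly (it is immediate from Theorems~\ref{main} and~\ref{thm:upperbds}), and you correctly identify that the naive monotonicity bound $H(m,\alpha)\leq H(m,2)$ is too lossy for $r\geq 4$. However, for $r\geq 4$ you do not supply a proof: you propose to reprove Theorem~\ref{thm:upperbds} with general $\alpha=(r-1)/(r-2)$ and then list the obstacles, but you never overcome them. This is a genuine gap, and moreover the route you sketch is likely a dead end: the paper's closing remarks state explicitly that their arguments bounding $H(m,\alpha)$ ``fail at some point'' for $1<\alpha<2$, i.e.\ precisely for $r\geq 4$. In particular, the structural lemma Proposition~\ref{prop:sizeatleast2}, on which the proof of Theorem~\ref{thm:upperbds} rests, relies on inequalities such as those in Lemma~\ref{lemma:unique-choice} and Lemma~\ref{prop:swap} whose signs depend on $\alpha\geq 2$; pushing the analysis down to $\alpha=3/2,4/3,\dots$ is not a matter of carrying the constant $\frac{83}{192}$ through.

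The paper instead sidesteps the problem entirely with a reduction from $r\geq 4$ to $r=3$. Given $A\in\Avoid(m,r,M)$, it partitions the columns according to their ``$3$-support'' (the set of rows where the entry lies in $\{0,1,2\}$). For a fixed $3$-support $X$ with $|X|=k$, the distinct restrictions to $X$ form a matrix in $\Avoid(k,3,M)$, and each such restriction can be completed in at most $(r-3)^{m-k}$ ways on $[m]\setminus X$. Summing
\[
|A|\leq\sum_{k=0}^{m}\binom{m}{k}\forb(k,3,M)(r-3)^{m-k},
\]
plugging in $\forb(k,3,M)\leq\bigl(1+\tfrac{83}{192}\bigr)k2^{k-1}+2^{k}$ (the already-established $r=3$ case), and applying the binomial theorem together with $k\binom{m}{k}=m\binom{m-1}{k-1}$ collapses the sums to exactly $\bigl(1+\tfrac{83}{192}\bigr)m(r-1)^{m-1}+(r-1)^{m}$. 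This reduction avoids any new analysis of $H(m,\alpha)$ for $\alpha<2$; you should look for such a bootstrapping argument rather than trying to extend the TCM machinery below $\alpha=2$.
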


Our third result is a lower bound on $H_2(m,\alpha)$ obtained by a recursive construction, which then implies a lower bound on $\forb(m,r,M)$ for all $r\geq 3$. For every $\alpha>1$, let $\lambda(\alpha)=\sum_{j=1}^\infty\frac{2^{j-1}}{\alpha^{2^j}}$.
\begin{thm}\label{thm:generallowerbds}
For every $\alpha>1$, $\epsilon>0$, and all sufficiently large $m$, we have 
$$H_2(m,\alpha)>\frac12(\lambda(\alpha)-\epsilon)m\alpha^m.$$
Consequently, for every $r\geq 3$, $\epsilon>0$, and all sufficiently large $m$, we have
$$\forb(m,r,M)\geq\left(1+\frac{r-1}{2(r-2)^2}\lambda\left(\frac{r-1}{r-2}\right)-\epsilon\right)m(r-1)^{m-1}+(r-1)^m.$$
\end{thm}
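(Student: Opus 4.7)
The plan is to first obtain the $H_2$ lower bound via an explicit recursive construction of 2-recursive TCMs, and then deduce the $\forb$ bound by substituting into Theorem~\ref{main}. The starting point is the basic recursion coming directly from Definition~\ref{tcmdef}: given a bipartition $V = V_1 \sqcup V_2$ with $|V_i| = m_i$ and 2-recursive TCMs $\mathcal{G}_1, \mathcal{G}_2$ on the two parts, the resulting 2-recursive TCM $\mathcal{G}$ has weight $\alpha^{m_2} w(\mathcal{G}_1, \alpha) + \alpha^{m_1} w(\mathcal{G}_2, \alpha) + m_1 m_2$, since every $\mathcal{G}_1$-edge picks up multiplicity $m_2$ from triples completed in $V_2$, every $\mathcal{G}_2$-edge picks up multiplicity $m_1$, and the $m_1 m_2$ crossing pairs retain multiplicity zero (each contributing $\alpha^0 = 1$). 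Maximising over the subconstructions yields
$$H_2(m_1 + m_2, \alpha) \geq \alpha^{m_2} H_2(m_1, \alpha) + \alpha^{m_1} H_2(m_2, \alpha) + m_1 m_2.$$

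Next I would iterate the balanced split $m_1 = m_2 = 2^{k-1}$ at $m = 2^k$. Writing $u_k := H_2(2^k, \alpha)/\alpha^{2^k}$ and noting $u_0 = 0$, the recursion reduces to $u_k \geq 2 u_{k-1} + 4^{k-1}/\alpha^{2^k}$, which unrolls to $u_k \geq 2^{k-1} \sum_{j=1}^{k} 2^{j-1}/\alpha^{2^j}$. Since this partial sum converges to $\lambda(\alpha)$, for every $\epsilon > 0$ and all sufficiently large $k$ we obtain $H_2(2^k, \alpha) \geq \tfrac{1}{2}(\lambda(\alpha) - \epsilon) \cdot 2^k \alpha^{2^k}$, matching the target bound at powers of two.

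For arbitrary $m$, I would pass to the binary expansion $m = 2^{a_1} + \cdots + 2^{a_\ell}$ with $a_1 > \cdots > a_\ell \geq 0$, and build a 2-recursive TCM by peeling off one block of size $2^{a_j}$ at each recursive split, placing the power-of-two construction from the previous step on each block. Iterating the basic recursion and dropping the positive cross-terms $2^{a_j} m_{j+1}$ gives
$$H_2(m, \alpha) \geq \frac{\alpha^m}{2}\sum_{j=1}^{\ell} 2^{a_j} \lambda_{a_j}(\alpha),$$
where $\lambda_{a_j}$ denotes the $a_j$-th partial sum of $\lambda(\alpha)$. Fixing $K$ with $\lambda_K(\alpha) > \lambda(\alpha) - \epsilon$, the indices with $a_j < K$ contribute less than $2^K$ to $m$, while all other indices satisfy $\lambda_{a_j} \geq \lambda(\alpha) - \epsilon$. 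This yields $H_2(m, \alpha) \geq \tfrac{1}{2}(\lambda(\alpha) - \epsilon)(m - 2^K)\alpha^m$, which, after absorbing the fixed additive loss by a small readjustment of $\epsilon$, gives the first claim for all sufficiently large $m$.

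The $\forb$ bound is then immediate from the lower-bound half of Theorem~\ref{main} with $\alpha = (r-1)/(r-2)$: since $\alpha^m (r-2)^{m-2} = (r-1)^m/(r-2)^2$, the $H_2$-contribution becomes $\tfrac{r-1}{2(r-2)^2}(\lambda(\alpha) - \epsilon)\, m(r-1)^{m-1}$, and combining with $(r-1)^m + m(r-1)^{m-1}$ and relabelling $\epsilon$ produces the stated inequality. The main obstacle is the general-$m$ step: naively padding a power-of-two construction loses a factor of two, so the binary-expansion construction is needed, together with the observation that the contribution from the low-order bits of $m$ is $O(\alpha^m) = o(m \alpha^m)$ and hence negligible in the limit.
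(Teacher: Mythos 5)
Your proof is correct, and it takes a genuinely different (and arguably cleaner) route from the paper. The paper also constructs a 2-recursive TCM and evaluates it, but its construction $\mathcal{G}(m)$ peels off a near-balanced block of size $2^{k}$ where $k$ is chosen so that $2^k+2^{k-1}\leq m<2^k+2^{k+1}$; it then establishes the asymptotics via a technical lemma involving the quantity $\widetilde{g}(k,\alpha)=\min\{\overline{h}_2(m,\alpha):2^k\leq m<2^{k+1}\}$ and the recursive inequality $\widetilde{g}(k+1,\alpha)\geq\min\{\tfrac13 g(k,\alpha)+\tfrac23\widetilde{g}(k,\alpha),\,\tfrac12 g(k+1,\alpha)+\tfrac12\widetilde{g}(k,\alpha)\}$, from which $\liminf_{m\to\infty}\overline{h}_2(m,\alpha)=\lambda(\alpha)$ is squeezed out. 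You instead peel off blocks of sizes $2^{a_1}>2^{a_2}>\cdots$ given by the binary expansion of $m$, each equipped with the balanced power-of-two construction. That yields the closed-form bound $H_2(m,\alpha)\geq\tfrac{\alpha^m}{2}\sum_j 2^{a_j}\lambda_{a_j}(\alpha)$ directly, and the ``low-order bits contribute at most $2^K$'' observation gives the asymptotics with no recursion on a running minimum. Your construction is asymptotically equivalent but not optimal for fixed $m$ (e.g.\ for $m=5$ the binary split $4+1$ gives $28$ while the paper's near-balanced split $2+3$ achieves $H_2(5,2)=30$); the paper needs its more complicated choice of split because it later proves in Theorem \ref{thm:lowerbds} that this split is exactly optimal when $\alpha=2$. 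For the present theorem alone, your route is a valid simplification.
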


Note that since $\lambda(2)\approx 0.390747$ and $\frac{83}{192}\approx0.432292$, Theorem \ref{main}, Theorem \ref{thm:upperbds} and Theorem \ref{thm:lowerbds} together improve the bounds for $\forb(m,3,M)$ given by (\ref{eq:trivi-lower-upper}). For every $r\geq 3$, one can compare to a Riemann sum to show that $\frac{r-1}{2(r-2)^2}\lambda(\frac{r-1}{r-2})>0.25$, and numerical experiments suggest a stronger bound $\frac{r-1}{2(r-2)^2}\lambda(\frac{r-1}{r-2})>0.36$.

Moreover, in the case when $\alpha=2$, we show that the construction we used in Theorem \ref{thm:generallowerbds} above to obtain a lower bound for $H_2(m,2)$ is actually optimal, which in particular provides an exact recurrence formula for $H_2(m,2)$. 
\begin{thm}\label{thm:lowerbds}
For every $m\geq 3$ except 6, let $k=k(m)$ be the unique integer such that $2^k+2^{k-1}\leq m<2^k+2^{k+1}$. For $m=6$, let $k=k(6)=3$. Then $$H_{2}(m,2)=H_{2}(2^k,2)2^{m-2^k}+H_{2}(m-2^k,2)2^{2^k}+2^k(m-2^k).$$
Consequently, $\lim_{m\to\infty}\frac{H_{2}(m,2)}{m2^{m-1}}=\lambda(2)=\sum_{j=1}^{\infty}\frac{2^{j-1}}{2^{2^j}}$.
\end{thm}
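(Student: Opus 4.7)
The plan is to first reduce the problem to a one-step recursion coming from the definition of a 2-recursive TCM, then identify the optimal split by strong induction on $m$, and finally extract the limit by an elementary dyadic unfolding.

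Suppose $\mathcal{G}$ is a 2-recursive TCM on $[m]$ associated with a partition $V_1 \cup V_2$ of sizes $a$ and $b$. By Definition \ref{tcmdef}, an edge $uv\in V_1^{(2)}$ has multiplicity $m^{\mathcal{G}_1}_{uv} + b$ in $\mathcal{G}$ (contributions from triples inside $V_1$, plus one for each $w\in V_2$), an edge $vw\in V_2^{(2)}$ has multiplicity $m^{\mathcal{G}_2}_{vw} + a$, and cross-edges have multiplicity $0$. Summing weights gives
\[ w(\mathcal{G}, 2) \;=\; 2^b \, w(\mathcal{G}_1, 2) \;+\; 2^a \, w(\mathcal{G}_2, 2) \;+\; ab, \]
so that
\[ H_2(m, 2) \;=\; \max_{\substack{a+b = m \\ a,b \geq 1}} \bigl(2^b H_2(a,2) + 2^a H_2(b,2) + ab\bigr). \qquad (\star) \]
With $(\star)$ in place, Theorem \ref{thm:lowerbds} reduces to showing that the maximum is attained at $a=2^{k(m)}$, with the single anomaly at $m=6$.

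I would proceed by strong induction on $m$, with base cases $m\in\{3,4,5,6,7\}$ checked directly starting from $H_2(1,2)=0$ and $H_2(2,2)=1$. For the inductive step, set $f(a):=2^{m-a}H_2(a,2)+2^aH_2(m-a,2)+a(m-a)$ and aim to show $f(a)\leq f(2^{k(m)})$ for every $1\leq a\leq m-1$. The induction hypothesis supplies an explicit recursive description of $H_2(a,2)$ and $H_2(m-a,2)$, so $f(a)$ can be expanded in $H_2$-values at strictly smaller arguments, reducing the inequality to a finite case analysis on how $a$ and $m-a$ decompose under the dyadic rule $k(\cdot)$. I expect the main obstacle to lie here: one must verify unimodality (or some similar control) of $f(a)$ for all $m$ outside a small initial range. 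A promising route is to analyse the discrete difference $f(a+1)-f(a)$; the contribution from the $a(m-a)$ term is a harmless linear function of $a$, so the sign of the difference is governed by the exponential terms $2^{m-a}H_2(a,2)+2^aH_2(m-a,2)$, whose leading $2^{m-1}$-factor can be isolated using the inductive formula. The $m=6$ anomaly should emerge naturally from this computation, since for small arguments the additive $ab$ term is comparable to $H_2$ and can tip the optimum toward a balanced split.

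For the limit, specialising $(\star)$ to $m=2^k$ forces the split $a=b=2^{k-1}$ and gives $H_2(2^k,2)=2\cdot 2^{2^{k-1}}H_2(2^{k-1},2)+2^{2k-2}$. Setting $a_k:=H_2(2^k,2)/(2^k\cdot 2^{2^k-1})$ simplifies this to $a_k=a_{k-1}+2^{k-1}/2^{2^k}$ with $a_0=0$, hence $a_k\to\lambda(2)$. For general $m$, writing $g(m):=H_2(m,2)/(m\cdot 2^{m-1})$, the recursion rearranges to
\[ g(m) \;=\; \frac{2^k}{m} g(2^k) \;+\; \frac{m-2^k}{m} g(m-2^k) \;+\; \frac{2^k(m-2^k)}{m\cdot 2^{m-1}}, \]
expressing $g(m)$ as a convex combination of $g(2^k)$ and $g(m-2^k)$ up to an error of order $m/2^m$. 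Since $2^k\to\infty$ and $m-2^k\geq 2^{k-1}\to\infty$ as $m\to\infty$, a standard $\liminf/\limsup$ argument gives $g(m)\to\lambda(2)$.
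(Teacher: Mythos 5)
Your skeleton is right: establish the recurrence
\[
H_2(m,2)=\max_{a+b=m}\bigl(2^bH_2(a,2)+2^aH_2(b,2)+ab\bigr),
\]
then show by induction that the maximum sits at $a=2^{k(m)}$, then extract the limit. The recurrence is exactly the paper's Lemma \ref{hrbrecurrence}, and your convex-combination $\liminf/\limsup$ argument for the limit is sound (since $2^k/m$ and $(m-2^k)/m$ stay in $[1/3,2/3]$ and $2^k,\,m-2^k\to\infty$); in fact it is a slightly more direct route to the limit than the paper's, which separately proves $\limsup h_2(m)\le\lambda$ (Lemma \ref{fleqg}) and $\liminf h_2(m)\ge\lambda$ (via Lemma \ref{tildeasymptotic}).

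The gap is in the middle step, which you flag as ``the main obstacle'' and then propose to handle via unimodality of $f(a)=2^{m-a}H_2(a,2)+2^aH_2(m-a,2)+a(m-a)$, or via the sign of $f(a+1)-f(a)$. That route does not work: already at $m=12$ one has $f(4)=9504$, $f(5)=9379$, $f(6)=9380$, so $f$ has an internal local minimum at $a=5$ and is not unimodal. Because of this, the differences $f(a+1)-f(a)$ do not have a single sign change, and a one-variable monotonicity argument cannot by itself isolate $a=2^{k(m)}$ as the argmax. The paper instead compares $f(a)$ to $f(2^k)$ directly by substituting a chain of intermediate splits (using the bracket notation $[x]$ and $[x,y]$) and cancelling the major terms, which requires a twelve-case analysis keyed to the dyadic positions of $a$ and $b$. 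Related to this, your base cases $\{3,\dots,7\}$ are far too few: the paper verifies the recurrence by hand for all $m\le 48$ and then needs $k\ge 5$ (so $s=2^{k-1}\ge 16$) to push several of the minor-term estimates through. Finally, note that the stated anomaly at $m=6$ is $H_2(6,2)=73$ coming from the split $3+3$, not from any $2^k$; the optimal split is balanced there and your computation should reproduce that, but be careful not to read the ``$k(6)=3$'' in the statement literally as $a=8$.
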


Furthermore, we have the following conjecture when $r=3$ and $\alpha=2$, .
\begin{conj}\label{conj}
$H(m,2)=H_{2}(m,2)$ for all $m\geq 3$. Consequently, $\forb(m,3,M)=2^m+m2^{m-1}+H(m,2)$, and
$$\lim_{m\to\infty}\frac{\forb(m,3,M)}{m2^{m-1}}-1=\lim_{m\to\infty}\frac{H(m,2)}{m2^{m-1}}=\lim_{m\to\infty}\frac{H_{2}(m,2)}{m2^{m-1}}=\lambda(2).$$
\end{conj}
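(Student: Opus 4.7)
The substance of the conjecture is the nontrivial inequality $H(m,2) \leq H_2(m,2)$; the reverse is immediate from the definitions, and both \emph{consequently} clauses then follow mechanically. Specialising Theorem \ref{main} to $r=3$ collapses $(r-2)^{m-2}$ to $1$ and sandwiches $\forb(m,3,M) - 2^m - m2^{m-1}$ between $H_2(m,2)$ and $H(m,2)$; assuming the main equality, this gives $\forb(m,3,M) = 2^m + m2^{m-1} + H(m,2)$, and the three-way limit is then the asymptotic half of Theorem \ref{thm:lowerbds}. So the plan focuses entirely on establishing $H(m,2) \leq H_2(m,2)$ by strong induction on $m$, with the small-$m$ base cases verified directly.

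For the inductive step, I take an extremal TCM $\mathcal{G}$ on $[m]$ and aim to show that $\mathcal{G}$ itself is 2-recursive. The fundamental tool is the single-triple swap: if $uv$ is chosen in a triple $\{u,v,w\}$, rechoosing $uw$ alters the weight by $2^{m_{uw}} - 2^{m_{uv}-1}$, so extremality forces the strict-dominance condition $m_{uv} > \max(m_{uw}, m_{vw})$ in every triple. In particular every chosen edge has multiplicity $\geq 1$, and the zero-multiplicity graph $Z = \{e : m_e = 0\}$ is triangle-free (whenever $xy \in Z$, the chosen edge in $\{x,y,z\}$ cannot be $xy$, forcing $m_{xz}, m_{yz} \geq 1$). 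The crux is to promote $Z$ to contain a \emph{spanning complete bipartite} subgraph on a partition $V_1 \cup V_2 = [m]$. Once this is in hand, no cross edge is ever chosen, so $\mathcal{G}$ admits the decomposition of Definition \ref{tcmdef}, and the restrictions $\mathcal{G}|_{V_1}, \mathcal{G}|_{V_2}$ must themselves be extremal TCMs on their ground sets (any strictly better choice on one part, recombined with the unchanged cross data and the other part, would yield a TCM of weight exceeding $H(m,2)$). Combining this with the identity
\[
w(\mathcal{G},2) = 2^{|V_2|}\, w(\mathcal{G}|_{V_1},2) + 2^{|V_1|}\, w(\mathcal{G}|_{V_2},2) + |V_1|\cdot|V_2|,
\]
which reflects the fact that each edge inside $V_i$ gains a uniform additive boost of $|V_{3-i}|$ in multiplicity from the mixed triples, the induction hypothesis then yields $w(\mathcal{G},2) \leq H_2(m,2)$, matching the recurrence in Theorem \ref{thm:lowerbds}.

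The principal obstacle is securing the partition. Single-swap optimality only rules out triangles in $Z$; excluding longer odd cycles $C_5, C_7, \ldots$ requires designing coordinated multi-triple swaps whose net weight change can be controlled, and this is exactly where the value $\alpha = 2$ must enter essentially via the doubling identity $2 \cdot 2^a = 2^{a+1}$ (which makes two $+1$-swaps on one edge weight-neutral against a single $+2$-swap on another, a cancellation that fails for generic $\alpha$). Assuming bipartiteness of $Z$ with colour classes $V_1, V_2$, promoting to the full complete bipartite graph requires a second argument: if a cross edge $uv$ had $m_{uv} \geq 1$, the strict-dominance condition applied over the $m-2$ triples $\{u,v,w\}$ would force incompatible inequalities against the multiplicities of the zero edges adjacent to $w$ on one side of the bipartition, and a counting contradiction should emerge by summing these inequalities. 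I expect the odd-cycle exclusion step to be the genuinely hard one, likely requiring either a new global invariant tailored to $\alpha = 2$ or a careful case analysis for short odd cycles combined with an inductive reduction to longer ones; the apparent difficulty of finding such an argument, rather than any evidence against the statement, is presumably why the authors leave the result as a conjecture.
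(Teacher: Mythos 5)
The statement you are addressing is a conjecture: the paper offers no proof of $H(m,2)=H_{2}(m,2)$, only a suggested line of attack in Section \ref{closingremarks}, so there is no argument of the authors to compare yours against. Your framing of the problem is correct as far as it goes: the inequality $H_2(m,2)\le H(m,2)$ is trivial, the identity $\forb(m,3,M)=2^m+m2^{m-1}+H(m,2)$ would follow from Theorem \ref{main} with $r=3$, and the limit statement from Theorem \ref{thm:lowerbds}; your single-swap optimality criterion is exactly the second part of Lemma \ref{lemma:unique-choice}, and the weight identity $w(\mathcal{G},2)=2^{|V_2|}w(\mathcal{G}|_{V_1},2)+2^{|V_1|}w(\mathcal{G}|_{V_2},2)+|V_1||V_2|$, together with the recurrence of Lemma \ref{hrbrecurrence}, does reduce everything to the structural claim that some extremal TCM admits a partition $[m]=V_1\cup V_2$ in which every cross edge has multiplicity $0$. (Note this is precisely equivalent to the authors' own proposed approach of finding an extremal TCM with exactly two maximal closed sets: if $V_1$ and $V_2$ are both closed, no cross edge is ever chosen, and conversely.)

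But that structural claim is the entire content of the conjecture, and your proposal does not establish it. You prove only that the zero-multiplicity graph $Z$ is triangle-free (and even there the parenthetical ``forcing $m_{xz},m_{yz}\ge 1$'' is overstated: only the chosen edge of the triple is forced to have positive multiplicity), and you explicitly defer the two decisive steps: (i) excluding longer odd cycles so that $Z$ is bipartite, and (ii) promoting a bipartition of $Z$ to a \emph{spanning complete} bipartite zero structure, where you say only that ``a counting contradiction should emerge.'' Step (ii) hides at least as much as step (i): it is not even known that every vertex of an extremal TCM is incident to a zero-multiplicity edge, and single-triple swaps alone will not produce the needed coordinated rearrangements --- the paper's Proposition \ref{prop:sizeatleast2}, which proves only the much weaker statement that maximal closed sets can be taken to have size at least $2$, already requires the elaborate multi-swap machinery of Lemmas \ref{prop:swap}--\ref{lemma:weight-degree}. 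So what you have is a correct reduction of the conjecture to the same open structural question the authors pose, plus a heuristic for why $\alpha=2$ should help, not a proof; the gap is the absence of any argument for bipartiteness of $Z$ and for the completeness of the cross zero structure.
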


The rest of the paper is organised as follows. In Section \ref{prelim}, we introduce two key concepts that will be used throughout this paper. In Section \ref{choice}, we show that to every extremal $A\in\Avoid(m,r,M)$, we can associate what is called a \emph{choice}, which is a collection $\mathcal{B}$ of $\binom{m}3$ $3\times3$ matrices, one for each triple of rows in $A$. Roughly speaking, for each triple of rows in $A$, the corresponding $3\times3$ matrix in $\mathcal{B}$ witnesses the fact that $A$ restricted to these three rows contains no configuration of $M$. We further distinguish every choice as either \emph{good} or \emph{bad}. In Section \ref{multigraph}, we encode the information contained in every choice $\mathcal{B}$ into a directed multigraph $\mathcal{D}_{\mathcal{B}}$, which allows us to analyse the choices and, in particular, compute $\forb(m,r,M)$ more easily. 

In Section \ref{proofof1.5}, we prove Theorem \ref{main}, which links $\forb(m,r,M)$ with the quantities $H(m,\frac{r-1}{r-2})$ and $H_2(m,\frac{r-1}{r-2})$ introduced in Question \ref{triangleproblem}. We first prove this in Section \ref{goodchoice} under the additional assumption that the choice $\mathcal{B}$ associated to the extremal matrix $A$ is good. To this end, we define an additional undirected multigraph $\mathcal{G}_{\mathcal{B}}$, and write $\forb(m,r,M)$ in terms of its edge multiplicities. Then, we finish the proof in Section \ref{reduction} by dealing with the case when the associated choice $\mathcal{B}$ is not necessarily good. 

In Section \ref{uppersection}, we prove Theorem \ref{thm:upperbds} and Theorem \ref{thm:generalupperbds}. First, in Section \ref{closedsets}, we define the concept of closed sets, which will help us analyse the structures of triangular choice multigraphs. Then, in Section \ref{atleast2}, we prove a key lemma showing that when $\alpha=2$, there exists an extremal TCM whose maximal closed sets all have sizes at least 2. In Section \ref{upperproof}, we use this lemma to prove Theorem \ref{thm:upperbds}, which gives an upper bound on $H(m,2)$, and then deduce Theorem \ref{thm:generalupperbds} from it. 

In Section \ref{lowersection}, we prove Theorem \ref{thm:generallowerbds} and Theorem \ref{thm:lowerbds}. We begin by using the definition of 2-recursive TCM to find a recurrence relation for $H_2(m,\alpha)$ that involves maximising over all possible bipartitions of $[m]$, which we call \emph{splits}. Then we prove Theorem \ref{thm:generallowerbds} in Section \ref{lowerboundconstruction}, which gives a lower bound on $H_2(m,\alpha)$, by specifying a particular split for every $m$. In Section \ref{h2exact}, we prove Theorem \ref{thm:lowerbds}, which completely determines the values of $H_{2}(m,2)$ by showing the splits we specified in Section \ref{lowerboundconstruction} are actually optimal.

In Section \ref{closingremarks}, we begin by stating generalisations to Theorem \ref{thm:upperbds} and Theorem \ref{thm:lowerbds} that we proved for the cases when $\alpha\geq 2$, and make an analogous conjecture to Conjecture \ref{conj}. Finally, we comment on this conjecture and the open cases of $r\geq 4$ and $\alpha<2$.

\section{Preliminaries}\label{prelim}
In this section, we introduce two key concepts that will be used throughout this paper. 
\subsection{Choice}\label{choice}
In this subsection, we associate what is called a choice to every extremal $A\in\Avoid(m,r,M)$. Let $A\in\Avoid(m, r, M)$ be extremal. Note that for each triple $i,j,k$ of rows, $A$ restricted to rows $i, j, k$ does not contain any configuration of $M$ is equivalent to $A$ restricted to rows $i, j, k$ containing at most one column in each of the following three pairs.
\begin{equation}\label{eq:misspairs}
\left\{\begin{bmatrix} 1\\ 0\\ 0 \end{bmatrix},  \begin{bmatrix} 0\\ 1\\ 1 \end{bmatrix}\right\},
\left\{\begin{bmatrix} 0\\ 1\\ 0 \end{bmatrix},  \begin{bmatrix} 1\\ 0\\ 1 \end{bmatrix}\right\},
\left\{\begin{bmatrix} 0\\ 0\\ 1 \end{bmatrix},  \begin{bmatrix} 1\\ 1\\ 0 \end{bmatrix}\right\}
\end{equation}

Since $A$ contains the maximum number of distinct columns, for each triple $i,j,k$ of rows, $A$ restricted to those rows must contain exactly one column in each of the three pairs above, as otherwise we can obtain a matrix in $\Avoid(m,r,M)$ that contains strictly more columns. This leads to the following definitions. 

\begin{definition}
A \textit{choice} on $V$ is a collection $\mathcal{B}=(B_{i,j,k}\colon\{i,j,k\}\in V^{(3)})$ of $3\times 3$ matrices, where each $B_{i,j,k}$ is formed by picking one column from each of the three pairs in (\ref{eq:misspairs}) and putting them together in that order. For every $X\subset V$, the \textit{restriction} of a choice $\mathcal{B}$ on $V$ to $X$ is the choice $\mathcal{B}|_X=(B_{i,j,k}\colon\{i,j,k\}\in X^{(3)})$ on $X$.

Let $A\in\Avoid(m,r,M)$. We say that rows $i,j,k$ \textit{forbid} a $3\times 3$ matrix $B_{i,j,k}$ if $A$ restricted to rows $i,j,k$ contains no column of $B_{i,j,k}$. If $A$ is extremal, then there is a unique $3\times3$ matrix $B_{i,j,k}$ forbidden by each triple $i,j,k$ of rows, hence every extremal $A$ is associated with a choice $\mathcal{B}=(B_{i,j,k}\colon\{i,j,k\}\in[m]^{(3)})$ on $[m]$.

We call a choice $\mathcal{B}$ \textit{good} if none of the $B_{i,j,k}$ is $I$ or $I^c$, and \textit{bad} otherwise. 

Given a choice $\mathcal{B}$ on $[m]$, define $\Avoid(m,r,\mathcal{B})$ to be the set of all simple $m$-rowed  matrices $A$ such that for each triple $i,j,k$, $A$ restricted to rows $i,j,k$ contains no columns of $B_{i,j,k}$. Also, define $\forb(m,r,\mathcal{B})=\max\{\abs{A}\colon A\in\Avoid(m,r,\mathcal{B})\}$.
\end{definition}

It is clear from the definition above that $\Avoid(m,r,M)$ is the union of $\Avoid(m,r,\mathcal{B})$ taken over all possible choices $\mathcal{B}$ on $[m]$. Hence, $$\forb(m,r,M)=\max\{\forb(m,r,\mathcal{B})\colon\mathcal{B}\text{ is a choice on }[m]\}.$$

\begin{definition}
For any column $c$, define its \emph{support} to be the index set of its $(0,1)$-entries. For a choice $\mathcal{B}$ on $V$ and any column $c$ with row index set $V$, we say $c$ is \textit{valid} with respect to $\mathcal{B}$ if for every triple $i,j,k$ of rows, $c$ restricted to these rows does not contain a column of $B_{i,j,k}$. 

Now let $\mathcal{B}$ be a choice on $[m]$ and let $X\subset[m]$. Define $\mathcal{C}(\mathcal{B},X)$ to be the set of valid $(0,1)$-columns with row set $X$ with respect to $\mathcal{B}|_X$, and define $c(\mathcal{B}, X)=\abs{\mathcal{C}(\mathcal{B}, X)}$. Similarly, let $\mathcal{C}^*(\mathcal{B}, X)$ be the set of all valid columns on $[m]$ with respect to $\mathcal{B}$ that have support $X$, and let $c^*(\mathcal{B}, X) = \abs{\mathcal{C}^*(\mathcal{B}, X)}$.
\end{definition}

Observe that $c^*(\mathcal{B}, X) = c(\mathcal{B}, X) (r-2)^{m-\abs{X}}$ since any of the $r-2$ symbols in $\{2,\cdots,r-1\}$ can be used on $[m] \setminus X$.
The following lemma shows that to compute $\forb(m,r,\mathcal{B})$, it suffices to count valid columns with different supports separately. 

\begin{lemma}\label{separatecount}
For any choice $\mathcal{B}$ on $[m]$, we have $$\forb(m,r,\mathcal{B})=\sum_{X\subset[m]}c^*(\mathcal{B}, X)=\sum_{X\subset[m]}c(\mathcal{B},X)(r-2)^{m-|X|}.$$
\end{lemma}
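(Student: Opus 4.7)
The plan is to reduce the lemma to a simple counting argument by observing that validity of a column with respect to $\mathcal{B}$ is determined purely by its support together with its restriction to that support. The argument has two steps: an initial structural observation about validity, then counting.

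First, I would fix an arbitrary column $c$ with row index set $[m]$ and let $X$ denote its support, so that $c|_X \in \{0,1\}^X$ and $c$ takes values in $\{2, \ldots, r-1\}$ on $[m] \setminus X$. Since each $B_{i,j,k}$ is a $(0,1)$-matrix, $c|_{\{i,j,k\}}$ can equal a column of $B_{i,j,k}$ only when $c|_{\{i,j,k\}}$ is itself $(0,1)$-valued, which happens exactly when $\{i,j,k\} \subseteq X$. Consequently, $c$ is valid with respect to $\mathcal{B}$ if and only if $c|_X$ is valid with respect to $\mathcal{B}|_X$, i.e., if and only if $c|_X \in \mathcal{C}(\mathcal{B}, X)$. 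This equivalence immediately yields a bijection between the valid columns on $[m]$ with support exactly $X$ and the pairs $(c_0, f)$, where $c_0 \in \mathcal{C}(\mathcal{B}, X)$ and $f : [m] \setminus X \to \{2, \ldots, r-1\}$ is arbitrary, giving $c^*(\mathcal{B}, X) = c(\mathcal{B}, X) \cdot (r-2)^{m - |X|}$ and hence the second equality in the lemma (this is the identity already noted in the paragraph preceding the statement).

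For the first equality, I would note that $A \in \Avoid(m, r, \mathcal{B})$ precisely when $A$ is simple and every column of $A$ is valid with respect to $\mathcal{B}$, so $\forb(m,r,\mathcal{B})$ is at most the total number of valid columns. Since valid columns with different supports are automatically distinct, and within a given support the bijection above is one-to-one, taking $A$ to be the matrix consisting of every valid column exactly once (over all supports $X \subseteq [m]$) produces a simple matrix in $\Avoid(m,r,\mathcal{B})$ realising the bound $\sum_{X \subseteq [m]} c^*(\mathcal{B}, X)$. There is essentially no obstacle here; the only thing to be careful of is the initial observation that, because every $B_{i,j,k}$ consists of $(0,1)$-columns, entries in $\{2,\ldots,r-1\}$ play no role in validity and may be chosen freely on $[m]\setminus X$.
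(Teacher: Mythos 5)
Your proof is correct and follows essentially the same approach as the paper: partition valid columns by support, observe that the collection of all valid columns yields a matrix in $\Avoid(m,r,\mathcal{B})$ achieving the bound, and that any matrix in $\Avoid(m,r,\mathcal{B})$ can only contain valid columns. The only difference is that you also re-derive the identity $c^*(\mathcal{B},X)=c(\mathcal{B},X)(r-2)^{m-|X|}$, which the paper simply cites from the preceding paragraph.
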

\begin{proof}
Let $Y=\bigcup_{X\subset[m]}\mathcal{C}^*(\mathcal{B}, X)$. Note that for any $X_1,X_2\subset[m]$ and $X_1\neq X_2$, $\mathcal{C}^*(\mathcal{B},X_1)\cap\mathcal{C}^*(\mathcal{B},X_2)=\emptyset$. Therefore, $|Y|=\sum_{X\subset[m]}c^*(\mathcal{B}, X)$. Consider the matrix $A$ consisting of all the columns in $Y$ exactly once. Then, $A\in\Avoid(m,r,\mathcal{B})$ and hence $\forb(m,r,\mathcal{B})\geq|A|= \sum_{X\subset[m]}c^*(\mathcal{B}, X)$.

On the other hand, let $A'\in \Avoid(m,r,\mathcal{B})$. For every column $c$ in $A'$, if the support of $c$ is $X_c \subset [m]$, then $c \in\mathcal{C}^*(\mathcal{B},X_c)\subset Y$. Hence, $|A'|\leq|Y|=\sum_{X\subset[m]}c^*(\mathcal{B}, X)$ and so $\forb(m,r,\mathcal{B}) \leq \sum_{X\subset[m]}c^*(\mathcal{B}, X)$. Therefore, $\forb(m,r,\mathcal{B}) = \sum_{X\subset[m]}c^*(\mathcal{B}, X)$.
\end{proof}

\subsection{Multigraph}\label{multigraph}
In this subsection, we show how to encode the forbidden conditions carried by a choice $\mathcal{B}$ into a directed multigraphs. 
\begin{definition}\label{def:0-imp}
Let $X\subseteq [m]$ and $\mathcal{B}$ be a choice on $[m]$. For distinct $i,j\in X$, we say that there is a \emph{$0$-implication} from $i$ to $j$ on $X$ if for any column $c$ with support $X$ in a matrix $A\in\Avoid(m,r,\mathcal{B})$, $c_i=0$ implies $c_j=0$. 
\end{definition}

Fix a choice $\mathcal{B}$ on $[m]$. It is easy to check that for each triple $i,j,k$ of rows, if $B_{i,j,k}\neq I, I^c$, then $B_{i,j,k}$ is a configuration of one of the following: 
\begin{equation*}
    A_1=\begin{bmatrix}0&0&0\\1&0&1\\1&1&0\end{bmatrix},\mbox{ 
    
   }A_2=\begin{bmatrix}0&0&1\\0&1&0\\1&1&1\end{bmatrix}.
\end{equation*}

Let $X\subset [m]$ and $i,j,k$ be a triple in $X$. Note that if $B_{i,j,k}=A_1$, then the restriction applied by forbidding $B_{i,j,k}$ is exactly equivalent to two $0$-implications on $X$, one from $i$ to $j$ and another from $i$ to $k$. Similarly, if $B_{i,j,k}=A_2$, then the restriction is equivalent to two $0$-implications on $X$, one from $i$ to $k$ and another from $j$ to $k$. In general, when $B_{i,j,k}$ is a configuration of $A_1$ or $A_2$, the restriction it applies is always equivalent to two $0$-implications on $X$ among $i,j,k$. Moreover, these two $0$-implications either both come from the same row index or both go to the same row index. 

We can now encode this information into a directed multigraph. 

\begin{definition}\label{directedmultigraph}
Given a choice $\mathcal{B}$ and $X\subseteq [m]$, define the directed multigraph $\mathcal{D}_{\mathcal{B}}(X)$ associated to $\mathcal{B}$ and $X$ as follows. For each triple $i,j,k$ of rows of $X$, if $B_{i,j,k}=I, I^c$, we do nothing. Otherwise, we draw two directed edges corresponding to the two $0$-implications equivalent to forbidding $B_{i,j,k}$, with an edge going from $x$ to $y$ if there is a $0$-implication from $x$ to $y$. For brevity, we write $\mathcal{D}_{\mathcal{B}}$ for $\mathcal{D}_{\mathcal{B}}([m])$.
\end{definition}
In general, the directed multigraph $\mathcal{D}_{\mathcal{B}}(X)$ is not simply the subgraph induced by $X$ in $\mathcal{D}_{\mathcal{B}}$. Indeed, if a directed edge $ij$ in $\mathcal{D}_{\mathcal{B}}$ comes from the triple $i,j,k$, and $X$ contains $i,j$ but not $k$, then at least this copy of the directed edge $ij$ is not in $\mathcal{D}_{\mathcal{B}}(X)$. However, $\mathcal{D}_{\mathcal{B}}(X)$ could still contain a directed edge $ij$ if $X$ contains another row $k'$ which contributes to a directed edge $ij$. From definition, we also see that $\mathcal{D}_{\mathcal{B}}$ is a directed multi-graph with $2\binom{m}{3}$ directed edges if $\mathcal{B}$ is a good choice. Figure \ref{fig:ExampleZeroImplications} shows an example directed multigraph associated to a good choice $\mathcal{B}$ on five vertices. In the next section, we will use the directed multigraphs $\mathcal{D}_{\mathcal{B}}(X)$ to help us compute the quantities $c(\mathcal{B},X)$ in Lemma \ref{separatecount}.

\begin{figure}[h]
\input{ExampleZeroImplications}
\end{figure}
\section{Proof of Theorem \ref{main}}\label{proofof1.5}
In this section, we prove Theorem \ref{main}, which relates  $\forb(m,r,M)$ to the quantities $H(m,\frac{r-1}{r-2})$ and $H_2(m,\frac{r-1}{r-2})$ introduced in Question \ref{triangleproblem}. We do this in two steps. Recall from Section \ref{choice} that $\forb(m,r,M)=\max\{\forb(m,r,\mathcal{B})\colon\mathcal{B}\text{ is a choice on }[m]\}$, and every choice is either good or bad. We first prove Theorem \ref{main} in Section \ref{goodchoice} assuming the choice $\mathcal{B}$ is good. Then, we finish the proof in Section \ref{reduction} by proving Theorem \ref{reduce}, which bounds the maximum of $\forb(m,r,\mathcal{B})$ over all bad choices $\mathcal{B}$.
\subsection{Good choice}\label{goodchoice}
In this subsection, we assume $\mathcal{B}$ is a good choice on $[m]$. To determine $\forb(m,r,\mathcal{B})$, it suffices to compute $c(\mathcal{B}, X)$ for each $X\subset[m]$ by Lemma \ref{separatecount}.

\begin{lemma}\label{boundonX}
Let $\mathcal{B}$ be a good choice on $[m]$ and let $X\subset[m]$. Then $c(\mathcal{B}, X)= n_t(\mathcal{B},X)+t(\mathcal{B},X)+1$, where $t(\mathcal{B},X)$ is the number of strongly connected components in $\mathcal{D}_{\mathcal{B}}(X)$ and $n_t(\mathcal{B},X)$ is the number of unordered pairs of strongly connected components in $\mathcal{D}_{\mathcal{B}}(X)$ such that there is no directed edge between them.

Consequently, $c(\mathcal{B}, X)\leq n(\mathcal{B},X)+\abs{X}+1$, where $n(\mathcal{B},X)$ is the number of unordered pairs of vertices in $X$ with no directed edge between them in $\mathcal{D}_{\mathcal{B}}(X)$. For simplicity, we sometimes say there is a non-edge between two such vertices. Equality is achieved if all strongly connected components in $\mathcal{D}_{\mathcal{B}}(X)$ are singletons. 
\end{lemma}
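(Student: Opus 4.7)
The plan is to recast $c(\mathcal{B},X)$ as an antichain count on the condensation of $\mathcal{D}_{\mathcal{B}}(X)$, and then exploit the triple-structure of $\mathcal{D}_{\mathcal{B}}(X)$ to show this count has a simple closed form.

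First, I would observe that a $(0,1)$-column $c$ on $X$ is valid with respect to $\mathcal{B}|_X$ precisely when its zero-set $Z = \{i \in X : c_i = 0\}$ is closed under directed out-edges of $\mathcal{D}_{\mathcal{B}}(X)$; this is immediate from the defining property of $0$-implications, since each good $B_{i,j,k}$ forbids exactly the columns whose zero-pattern on $\{i,j,k\}$ violates the two out-edges it contributes. Since each strongly connected component must lie entirely inside or entirely outside any such $Z$, closed sets correspond bijectively to down-sets of the condensation DAG $G$ of $\mathcal{D}_{\mathcal{B}}(X)$, and via the standard bijection sending each down-set to its antichain of maximal elements, to antichains of $G$ (including the empty antichain). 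So $c(\mathcal{B},X)$ equals the total number of antichains of $G$.

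Second, I would prove two structural facts about $G$: (a) two SCCs $C,C'$ are incomparable in $G$ (no directed DAG-path either way) if and only if there is no directed edge between them in $\mathcal{D}_{\mathcal{B}}(X)$, and (b) there is no triple of pairwise non-edge SCCs. Both come from the same observation: every triple $\{i,j,k\} \subset X$ contributes exactly two edges to $\mathcal{D}_{\mathcal{B}}(X)$ sharing a common hub vertex, so at most one of the three pairs within $\{i,j,k\}$ is a multigraph non-edge. Applying this to a vertex chosen from each of three pairwise non-edge SCCs immediately yields (b). For (a), the nontrivial direction ``non-edge $\Rightarrow$ incomparable'' I would prove by induction on the length of a shortest DAG-path between the two SCCs (this length being $\geq 2$ since non-edge excludes a direct edge). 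The base case is a path $C \to D \to C'$: any triple $\{i,j,k\}$ with $i \in C$, $j \in D$, $k \in C'$ must, since $(i,k)$ is a non-edge, have its hub at $j$, but then ``$j$ is the source'' forces an edge $D \to C$ and ``$j$ is the sink'' forces $C' \to D$, each producing a DAG-cycle with the given path. For longer paths, I would use the second-to-last SCC in the path, splitting on whether it has a direct edge with $C$ (reducing to the base case) or is itself non-edge with $C$ (reducing to a shorter instance via the inductive hypothesis).

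Combining, the antichains of $G$ are the empty antichain, the $t$ singletons, and the size-$2$ antichains, which by (a) coincide with the $n_t$ non-edge SCC pairs; (b), combined with the trivial ``incomparable $\Rightarrow$ non-edge'', rules out antichains of size $\geq 3$. Summing yields $c(\mathcal{B},X) = 1 + t + n_t$. The stated inequality then follows from $t \leq |X|$ and $n_t \leq n(\mathcal{B},X)$, the latter because each non-edge pair of SCCs contains at least one non-edge pair of vertices and these are disjoint across different SCC-pairs; equality holds exactly when all SCCs are singletons, as both quantities in each estimate collapse in that case. The main obstacle is the ``non-edge $\Rightarrow$ incomparable'' direction of (a): the length-$2$ case analysis is where the triple-structure of $\mathcal{B}$ interacts most directly with the SCC structure of $G$, and one must set up the induction carefully so that the non-edge hypothesis propagates to shorter paths.
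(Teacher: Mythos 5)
Your proof is correct, and it reaches the same formula by a somewhat different route. The paper fixes a topological ordering $C_1,\dots,C_t$ of the strongly connected components of $\mathcal{D}_{\mathcal{B}}(X)$ and proves, in one shot, that any non-edge SCC pair $(C_i,C_j)$ with $i<j$ must satisfy $j=i+1$: picking $x\in C_i$, $y\in C_{i+1}$, $z\in C_j$ with $j>i+1$, the shared-hub structure of the good triple $\{x,y,z\}$ forces either an edge $C_j\to C_{i+1}$ or an edge $C_{i+1}\to C_i$, contradicting the ordering. It then enumerates the valid columns directly: the all-ones column, one column per $C_i$ (zero on $C_{\geq i}$), and one extra column per consecutive non-edge pair. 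You instead recast $c(\mathcal{B},X)$ as the number of antichains of the condensation DAG, and prove two structural claims — ``non-edge $\iff$ incomparable'' and ``no three pairwise non-edge SCCs'' — both again via the shared-hub property. Your claim (b) is immediate, but (a) costs you an induction on path length where the paper's topological-ordering lemma does the analogous work with a single three-vertex argument. The two sets of structural claims are logically equivalent (given any topological order, ``non-edge implies consecutive'' is the same as ``non-edge implies incomparable, and no three pairwise non-edges''), so the difference is mostly one of packaging. The antichain framing does cleanly separate the counting from the structure, and it makes the final identity $c=1+t+n_t$ automatic once (a) and (b) are in hand; the paper's direct enumeration is shorter overall. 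Both proofs hinge on the same combinatorial fact: each good triple contributes two $0$-implications with a common endpoint, so it leaves at most one of its three vertex pairs uncovered.

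Two small points worth tightening if you wrote this up formally. First, in your reduction of valid columns to antichains you should say explicitly that the zero-set $Z$, being forward-closed, is a union of SCCs, which is what lets you pass to the condensation. Second, your phrase ``at most one of the three pairs within $\{i,j,k\}$ is a multigraph non-edge'' is slightly off as stated — what the hub structure gives is that the two edges \emph{from this particular triple} cover two of the three pairs; a priori other triples could still fail to cover the third pair. That is exactly what you want for (b) (if all three pairs are non-edges, even this triple's two guaranteed edges have nowhere to go), but the wording should reflect that the guarantee is per-triple rather than global.
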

\begin{proof}
Fix a good choice $\mathcal{B}$ on $[m]$ and an $X\subset[m]$, write $t$ for $t(\mathcal{B},X)$ and let $C_1,\cdots,C_t$ be all the strongly connected components in $\mathcal{D}_{\mathcal{B}}(X)$. For any distinct $i,j\in[t]$, edges (if any) between vertices in $C_i$ and $C_j$ can only go in one direction, as otherwise $C_i\cup C_j$ is a larger strongly connected component. Hence, we may assume without loss of generality that $C_1,\cdots,C_t$ are ordered such that for any $i<j$ in $[t]$, edges (if any) between $C_i$ and $C_j$ always go from $C_i$ to $C_j$. 

Let $S=\{(i,j)\in[t]\times[t]\colon i<j\text{ and there is no directed edge between $C_i$ and $C_j$}\}$, then $|S|=n_t(\mathcal{B},X)$. We claim that $(i,j)\in S$ implies that $j=i+1$. Indeed, if $(i,j)\in S$ and $j>i+1$, let $x\in C_i, y\in C_{i+1}$ and $z\in C_j$. Then, as there is no directed edge between $C_i$ and $C_j$, the two directed edges corresponding to the two 0-implications created by $B_{x,y,z}$ must be $xy$ and $zy$, or $yx$ and $yz$. In the first case, the edge $zy$ goes from $C_j$ to $C_{i+1}$, while in the second case, the edge $yx$ goes from $C_{i+1}$ to $C_i$. Both contradict the ordering of the strongly connected components.

Now we compute $c(\mathcal{B},X)$, the number of valid (0,1)-columns $c$ with row set $X$ with respect to $\mathcal{B}|_X$. Such a column $c$ has to be constant on each strongly connected component. The column that is 1 on every component is a valid choice of $c$. For every $i\in[t]$, let $c$ be a valid column such that $C_i$ is the first component on which $c$ is 0. Then, $c$ is 1 on each $C_j$ with $j<i$ from assumption. Also, unless $(i,j)\in S$, $c$ is 0 on each $C_j$ with $j>i$ because there is a directed edge going from $C_i$ to $C_j$. In any case, the column $c$ that is 0 on $C_k$ for each $k\geq i$ and 1 on each $C_k$ with $k<i$ is always valid. As $i$ ranges in $[t]$, we get $t=t(\mathcal{B},X)$ such columns. From above, the only $j>i$ that could satisfy $(i,j)\in S$ is $j=i+1$. In this case, the column $c$ that is 0 on $C_k$ for each $k\geq i$ except $i+1$, and 1 on $C_{i+1}$ and each $C_k$ with $k<i$ is also valid. There are $|S|=n_t(\mathcal{B},X)$ such columns as $i$ ranges in $[t]$.

This implies that $c(\mathcal{B},X)=n_t(\mathcal{B},X)+t(\mathcal{B},X)+1$.
The second part of the lemma is is clear as $t(\mathcal{B},X)\leq\abs{X}$ and $n_t(\mathcal{B},X)\leq n(\mathcal{B},X)$ trivially. If all strongly connected components in $\mathcal{D}_{\mathcal{B}}(X)$ are singletons, then $t(\mathcal{B},X)=|X|$ and $n_t(\mathcal{B},X)=n(\mathcal{B},X)$, so equality is achieved. 
\end{proof}

Recall that $c^*(\mathcal{B},X) = c(\mathcal{B}, X) (r-2)^{m-\abs{X}}$. Lemma \ref{separatecount} and Lemma \ref{boundonX} imply that \begin{align}\label{eq:singlecomponents}
\forb(m,r,\mathcal{B})&=\sum_{X\subset[m]}c^*(\mathcal{B},X)\nonumber=\sum_{X\subset[m]} c(\mathcal{B}, X) (r-2)^{m-\abs{X}}\\
& \leq\sum_{X\subset[m]}(n(\mathcal{B},X)+\abs{X}+1)(r-2)^{m-\abs{X}} \nonumber \\
&= (r-1)^m + m(r-1)^{m-1} + \sum_{X\subset[m]}n(\mathcal{B},X)(r-2)^{m-\abs{X}}.
\end{align}  
To maximise the quantity $\sum_{X\subset[m]}n(\mathcal{B},X)(r-2)^{m-\abs{X}}$ over all good choices $\mathcal{B}$, we define an undirected multigraph $\mathcal{G}_{\mathcal{B}}$ that transforms this problem to Question \ref{triangleproblem}.

\begin{definition}\label{def:unordered}
Fix a good choice $\mathcal{B}$ on $[m]$. For every triple $x<y<z$, the restriction imposed by forbidding $B_{x,y,z}$ corresponds to 2 directed edges sharing a common vertex in the triangle $xyz$ in $\mathcal{D}_{\mathcal{B}}$, say $xy,xz$ or $yx,zx$. Then we add an undirected copy of the other edge, in this case $yz$, to the multigraph $\mathcal{G}_{\mathcal{B}}$. Moreover, for every unordered pair $xy\in[m]^{(2)}$, denote the multiplicity of edge $xy$ in $\mathcal{G}_{\mathcal{B}}$ by $m^{\mathcal{B}}_{xy}$, where we drop the superscript if the choice of $\mathcal{B}$ is clear. 
\end{definition}

Note that, by definition, $\mathcal{G}_{\mathcal{B}}$ is always a triangular choice multigraph and $m^{\mathcal{B}}_{xy}=m^{\mathcal{G}_{\mathcal{B}}}_{xy}$. On the other hand, for every triangular choice multigraph $\mathcal{G}$, we can find a good choice $\mathcal{B}$ such that $\mathcal{G}=\mathcal{G}_{\mathcal{B}}$. Figure \ref{fig:ExampleZeroImplications} provides an example of a directed multigraph $\mathcal{D}_{\mathcal{B}}$ associated to a good choice $\mathcal{B}$. The triangular choice multigraph $\mathcal{G}_{\mathcal{B}}$  associated to the same good choice $\mathcal{B}$ is given in Figure \ref{fig:ExampleTCM}.


The following lemma rewrites the sum in (\ref{eq:singlecomponents}) that we are trying to maximise, and brings it to the form of Question \ref{triangleproblem}.

\begin{lemma}\label{rewritesum}
For every good choice $\mathcal{B}$ on $[m]$, $$\sum_{X\subset[m]}n(\mathcal{B},X)(r-2)^{m-\abs{X}}= (r-2)^{m-2}\sum_{xy\in[m]^{(2)}}\left(\frac{r-1}{r-2}\right)^{m_{xy}}.$$
\end{lemma}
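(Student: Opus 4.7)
The plan is to prove the identity by swapping the order of summation, re-expressing the inner sum as counting subsets $X$ of a particular kind for each pair $\{x,y\}$, and then applying the binomial theorem.

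First I would unpack the quantity $n(\mathcal{B},X)$ by writing
\[
n(\mathcal{B},X)=\sum_{\{x,y\}\in X^{(2)}}\mathbbm{1}[\text{no directed edge between }x,y\text{ in }\mathcal{D}_{\mathcal{B}}(X)],
\]
so that after interchanging the sums over $X$ and $\{x,y\}$ one gets
\[
\sum_{X\subset[m]} n(\mathcal{B},X)(r-2)^{m-|X|} \;=\; \sum_{\{x,y\}\in[m]^{(2)}}\;\sum_{\substack{X\supset\{x,y\}\\ \text{no edge }x\sim y \text{ in }\mathcal{D}_{\mathcal{B}}(X)}}(r-2)^{m-|X|}.
\]

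Next I would translate the condition "no directed edge between $x,y$ in $\mathcal{D}_{\mathcal{B}}(X)$" into a condition on $\mathcal{G}_{\mathcal{B}}$. By Definition \ref{def:unordered}, for each triple $\{x,y,z\}$ with $z\in X\setminus\{x,y\}$, the two $0$-implications share a common vertex and the edge opposite the common vertex is the copy added to $\mathcal{G}_{\mathcal{B}}$; thus the triple contributes a directed edge between $x$ and $y$ in $\mathcal{D}_{\mathcal{B}}(X)$ if and only if the edge added to $\mathcal{G}_{\mathcal{B}}$ for that triple is \emph{not} $xy$. Hence, letting
\[
T_{xy}=\{\,z\in[m]\setminus\{x,y\}\colon\text{edge }xy\text{ is added to }\mathcal{G}_{\mathcal{B}}\text{ from the triple }\{x,y,z\}\,\},
\]
we have $|T_{xy}|=m_{xy}$, and there is no directed edge between $x,y$ in $\mathcal{D}_{\mathcal{B}}(X)$ if and only if $X\setminus\{x,y\}\subset T_{xy}$.

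Now the inner sum becomes, for fixed $\{x,y\}$,
\[
\sum_{S\subset T_{xy}}(r-2)^{m-|S|-2}=(r-2)^{m-2}\sum_{j=0}^{m_{xy}}\binom{m_{xy}}{j}(r-2)^{-j}=(r-2)^{m-2}\left(\frac{r-1}{r-2}\right)^{m_{xy}},
\]
by the binomial theorem. Summing over $\{x,y\}\in[m]^{(2)}$ yields exactly the claimed identity. The only real content is the bijective observation in the second paragraph identifying $T_{xy}$ with the multiplicity $m_{xy}$ in $\mathcal{G}_{\mathcal{B}}$; once that is clear, the remaining calculation is routine.
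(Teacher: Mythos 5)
Your proof is correct and follows essentially the same route as the paper: interchange the sums over $X$ and $\{x,y\}$, identify the sets $X$ with no directed $xy$ or $yx$ edge in $\mathcal{D}_{\mathcal{B}}(X)$ as those with $X\setminus\{x,y\}$ drawn from the $m_{xy}$ triples contributing a copy of $xy$ to $\mathcal{G}_{\mathcal{B}}$, and then apply the binomial theorem. Your explicit introduction of the set $T_{xy}$ makes the bijection with $m_{xy}$ a bit more transparent than the paper's phrasing, but the argument is the same.
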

\begin{proof}

Let $xy \in [m]^{(2)}$ and let $\{x,y\}\subset X \subset [m]$ be such that there is no directed $xy$ or $yx$ edge in $\mathcal{D}_{\mathcal{B}}(X)$. Equivalently, for all vertices $z\in X\setminus\{x,y\}$, $xy$ is the edge among $xy,xz,yz$ added in $\mathcal{G}_{\mathcal{B}}$. Since there are $m_{xy}$ such $z\in[m]\setminus\{x,y\}$, there are $\binom{m_{xy}}{k}$ such choices of $X$ with $|X|=k+2$ for all $0\leq k\leq m_{xy}$, as we choose $k$ such $z$ alongside $x$ and $y$.

Therefore, we have
\begin{align*}
\sum_{X\subset[m]}n(\mathcal{B},X)(r-2)^{m-\abs{X}}&=\sum_{X\subset[m]} (r-2)^{m-|X|} \sum_{xy\in X^{(2)}}\mathbbm{1}(\text{there is no edge }xy\text{ in }\mathcal{D}_{\mathcal{B}}(X))\\
&=\sum_{xy\in [m]^{(2)}} \sum_{\{x,y\}\subset X\subset[m]} (r-2)^{m-|X|}\mathbbm{1}(\text{there is no edge }xy\text{ in }\mathcal{D}_{\mathcal{B}}(X))\\
&= \sum_{xy\in[m]^{(2)}} \sum_{k=0}^{m_{xy}}  \binom{m_{xy}}{k} (r-2)^{m-k-2}\\
&= \sum_{xy\in[m]^{(2)}} (r-2)^{m-2-m_{xy}} \sum_{k=0}^{m_{xy}}  \binom{m_{xy}}{k} (r-2)^{m_{xy} - k}\\
&= (r-2)^{m-2}\sum_{xy\in[m]^{(2)}} \left ( \frac{r-1}{r-2}\right )^{m_{xy}}, 
\end{align*}
as required. 
\end{proof}

We now prove Theorem \ref{main} assuming Theorem \ref{reduce}, which will be proved in the next section.
\begin{proof}[Proof of Theorem \ref{main}]
The cases when $m=0,1,2$ are trivial, so assume $m\geq 3$. Let $\mathcal{B}$ be a good choice on $[m]$ and let $\alpha_r = \frac{r-1}{r-2}$. Then, (\ref{eq:singlecomponents}) and Lemma \ref{rewritesum} imply
\begin{align*}
\forb(m,r,\mathcal{B})&\leq(r-1)^m+m(r-1)^{m-1}+(r-2)^{m-2} \sum_{xy\in[m]^{(2)}}\alpha_r^{m^{\mathcal{G}_{\mathcal{B}}}_{xy}}\\
&\leq(r-1)^m+m(r-1)^{m-1}+(r-2)^{m-2} H \left (m, \alpha_r \right),
\end{align*}
as $\mathcal{G}_{\mathcal{B}}$ is a triangular choice multigraph. Since this is true for every good choice $\mathcal{B}$, and Theorem \ref{reduce} shows $\forb(m,r,\mathcal{B})$ have the same upper bound for all bad choices $\mathcal{B}$, we have $\forb(m,r,M)\leq (r-1)^m+m(r-1)^{m-1}+H(m, \alpha_r)(r-2)^{m-2}$, as required.


On the other hand, let $\mathcal{G}$ be a 2-recursive triangular choice multigraph on $[m]$ such that $w(\mathcal{G},\alpha_r)=H_{2}(m, \alpha_r)$, and let $[m]=V_1\cup V_2$ and $\mathcal{G}_1, \mathcal{G}_2,\mathcal{G}_3$ be as in Definition \ref{tcmdef}. As permuting the vertex labels does not change $w(\mathcal{G},\alpha_r)$, we may assume there exists some $m'\in[m-1]$ such that $V_1=[m']$ and $V_2=[m]\setminus[m']$. Note that from the definition of 2-recursive TCM, we have $$w(\mathcal{G},\alpha_r)=w(\mathcal{G}_1,\alpha_r)\alpha_r^{m-m'}+w(\mathcal{G}_2,\alpha_r)\alpha_r^{m'}+m'(m-m').$$ Hence, $w(\mathcal{G},\alpha_r)=H_{2}(m, \alpha_r)$ implies that $w(\mathcal{G}_1,\alpha_r)=H_{2}(m', \alpha_r)$ and $w(\mathcal{G}_2,\alpha_r)=H_{2}(m-m', \alpha_r)$. 

We say a choice $\mathcal{B}$ on $[m]$ is \textit{uniformly directed} if for all $i<j$, all directed edges between $i$ and $j$ in $\mathcal{D}_{\mathcal{B}}$ point from $i$ to $j$. We use induction to show that for every $m\geq1$ and every 2-recursive TCM $\mathcal{G}$ on $[m]$ satisfying $w(\mathcal{G},\alpha_r)=H_2(m,\alpha_r)$, there exists a good choice $\mathcal{B}$ on $[m]$ such that $\mathcal{G}_\mathcal{B}=\mathcal{G}$ and ${\mathcal{B}}$ is uniformly directed. Note that empty choice works for $m=1,2$. For $m\geq 3$, by induction hypothesis, there exist uniformly directed good choices $\mathcal{B}_1$ and $\mathcal{B}_2$ on $[m']$ and $[m]\setminus[m']$, respectively, such that $\mathcal{G}_{\mathcal{B}_1}=\mathcal{G}_1$ and $\mathcal{G}_{\mathcal{B}_2}=\mathcal{G}_2$. For each triple $1\leq i<j\leq m'<k\leq m$, define the choice on $i,j,k$ so that the corresponding edges in $\mathcal{D}_{\mathcal{B}}$ are $ik$ and $jk$. For each triple $1\leq i\leq m'<j<k\leq m$, define the choice on $i,j,k$ so that the corresponding edges in $\mathcal{D}_{\mathcal{B}}$ are $ij$ and $ik$. Taking these together with $\mathcal{B}_1$ and $\mathcal{B}_2$ gives a uniformly directed good choice $\mathcal{B}$ satisfying $\mathcal{G}_{\mathcal{B}}=\mathcal{G}$, as required. In particular, for all $X\subset[m]$, strongly connected components in $\mathcal{D}_{\mathcal{B}}(X)$ are all singletons. Hence, by Lemma \ref{separatecount}, Lemma \ref{boundonX}, and Lemma \ref{rewritesum}, we have
\begin{align*}
\forb(m,r,M)&\geq\forb(m,r,\mathcal{B})=\sum_{X\subset[m]}c^*(\mathcal{B},X)\\
&=\sum_{X\subset[m]}(n(\mathcal{B},X)+\abs{X}+1)(r-2)^{m-\abs{X}}\\
&=(r-1)^m+m(r-1)^{m-1}+\sum_{X\subset[m]}n(\mathcal{B},X)(r-2)^{m-\abs{X}}\\
&=(r-1)^m+m(r-1)^{m-1}+(r-2)^{m-2}\sum_{xy\in[m]^{(2)}}\alpha_r^{m^{\mathcal{G}_{\mathcal{B}}}_{xy}}\\
&=(r-1)^m+m(r-1)^{m-1}+(r-2)^{m-2}H_{2}(m, \alpha_r),
\end{align*}
as required.
\end{proof}



\subsection{Bad choice}\label{reduction}
In this section we prove the following result, which completes the proof of Theorem \ref{main} in the previous subsection.
\begin{thm}\label{reduce}
For every $m\geq0$, $r\geq3$,
$$\max\{\forb(m,r,\mathcal{B})\colon \mathcal{B}\text{ is a bad choice}\}-(r-1)^m-m(r-1)^{m-1}\leq H\left(m,\frac{r-1}{r-2}\right)\cdot(r-2)^{m-2}.$$
\end{thm}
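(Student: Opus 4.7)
The plan is to reduce the bad-choice case to the good-choice bound of Theorem~\ref{main} established in Section~\ref{goodchoice}. Specifically, I will show by induction on the number of bad triples $b(\mathcal{B}):=|\{\{i,j,k\}\in[m]^{(3)}\colon B_{i,j,k}\in\{I,I^c\}\}|$ that for every choice $\mathcal{B}$ there exists a good choice $\mathcal{B}^*$ with $\forb(m,r,\mathcal{B})\le\forb(m,r,\mathcal{B}^*)$. The base case $b(\mathcal{B})=0$ is immediate, so the entire work lies in the inductive step: given $\mathcal{B}$ with a bad triple $\{i^*,j^*,k^*\}$, produce a new choice $\mathcal{B}'$ agreeing with $\mathcal{B}$ outside $\{i^*,j^*,k^*\}$, with $b(\mathcal{B}')=b(\mathcal{B})-1$ and $\forb(m,r,\mathcal{B})\le\forb(m,r,\mathcal{B}')$.

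The natural candidates for the replacement $B'=B'_{i^*,j^*,k^*}$ are the three configurations of $A_1$ (when $B_{i^*,j^*,k^*}=I$) or the three configurations of $A_2$ (when $B_{i^*,j^*,k^*}=I^c$). For each such ``matched'' candidate, a direct enumeration shows that the valid $(0,1)$-column sets of $B=B_{i^*,j^*,k^*}$ and $B'$ on rows $(i^*,j^*,k^*)$ each have size five and their symmetric difference is a complementary pair $\{\vec{u},\vec{v}\}\subset\{0,1\}^3$ with $\vec{u}+\vec{v}=(1,1,1)$. Given $A\in\Avoid(m,r,\mathcal{B})$, my candidate $A'$ is obtained from $A$ by keeping every column whose $(i^*,j^*,k^*)$-restriction differs from $\vec{u}$, and for each ``lost'' column $c$ with restriction $\vec{u}$, replacing it by the column $c'$ with $(c'_{i^*},c'_{j^*},c'_{k^*})=\vec{v}$ and $c'_\ell=c_\ell$ for $\ell\notin\{i^*,j^*,k^*\}$. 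This swap is injective and, because $B=I$ or $B=I^c$ forbids the restriction $\vec{v}$, introduces no duplicates with the kept columns, so $|A'|=|A|$ with all columns distinct, and the new constraint $B'$ at $\{i^*,j^*,k^*\}$ is satisfied by design.

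The principal obstacle is to verify that $A'$ also respects every other constraint $B_{a,b,d}$ with $\{a,b,d\}\ne\{i^*,j^*,k^*\}$, since complementing entries on $\{i^*,j^*,k^*\}$ alters the restriction of each swapped column at every triple incident to these rows. I handle this by choosing $B'$ adaptively, based on the structure of the $0$-implications imposed by neighbouring good triples in $\mathcal{D}_{\mathcal{B}}$: these implications force many entries of a lost column (with $(c_{i^*},c_{j^*},c_{k^*})=\vec{u}$) to specific values on $[m]\setminus\{i^*,j^*,k^*\}$, sharply constraining the structure of the lost columns. A finite case analysis over the three matched candidate replacements, leveraging the full row-symmetry group under which $I$ and $I^c$ are invariant, should then show that at least one choice of $B'$ leaves every neighbouring constraint satisfied after the swap. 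The main technical difficulty is carrying out this case analysis cleanly, and I expect that the ``forcing'' effect of neighbouring $0$-implications---which becomes stronger as more triples around $\{i^*,j^*,k^*\}$ carry good configurations---provides enough slack that one of the three candidates always works. Applying the inductive hypothesis to the resulting $\mathcal{B}'$ then completes the proof of Theorem~\ref{reduce}.
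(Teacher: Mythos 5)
Your strategy is genuinely different from the paper's, and it has a gap at exactly the step you flag as the ``main technical difficulty.'' The paper never attempts to show $\forb(m,r,\mathcal{B})\le\forb(m,r,\mathcal{B}')$ for some good $\mathcal{B}'$; instead it proves the weaker estimate $c(\mathcal{B},X)\le|X|+b(\mathcal{B},X)-s(\mathcal{B},X)+1$ via the block decomposition of Lemmas~\ref{prop:nonedgeblocks}--\ref{lem:blockbound} and Proposition~\ref{prop:specblock}, and then constructs a good choice $\mathcal{B}'$ for which $b(\mathcal{B},X)-s(\mathcal{B},X)\le n(\mathcal{B}',X)$; this dominates $c(\mathcal{B},X)$ by $n(\mathcal{B}',X)+|X|+1$, which is an \emph{upper bound} on $c(\mathcal{B}',X)$ but not necessarily equal to it (equality needs all strongly connected components of $\mathcal{D}_{\mathcal{B}'}(X)$ to be singletons). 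So the paper's route deliberately avoids the comparison $c(\mathcal{B},X)\le c(\mathcal{B}',X)$ that your swap would require, and your inductive step is a stronger claim than what is actually proved.

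More importantly, the claim your inductive step rests on --- that at least one of the three matched candidate replacements $B'$ makes every swapped column valid for all neighbouring triples --- is not just unproven, it fails. By Lemma~\ref{separatecount} the extremal $A$ for a fixed $\mathcal{B}$ consists of \emph{all} columns that are valid for $\mathcal{B}$, so if three particular columns are all valid, all three appear in $A$. Now take $m=5$, $r=3$, $B_{1,2,3}=I$ (forbidding $100,010,001$ on rows $1,2,3$), and set $B_{1,2,4}$ to forbid $\{100,101,001\}$ on $(c_1,c_2,c_4)$, $B_{1,2,5}$ to forbid $\{011,010,001\}$ on $(c_1,c_2,c_5)$, $B_{1,3,4}$ and $B_{2,3,4}$ each to forbid $\{100,010,110\}$ on the respective row-order, $B_{1,3,5}$ to forbid $\{011,010,001\}$, $B_{2,3,5}$ to forbid $\{100,101,001\}$, and the remaining three triples arbitrary good choices. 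One checks directly that all of $c^{(1)}=(0,1,1,1,2)$, $c^{(2)}=(1,0,1,2,0)$ and $c^{(3)}=(1,1,0,1,2)$ are valid for $\mathcal{B}$, hence all lie in the extremal $A$. The three matched replacements for $I$ lead to the swaps $011\mapsto100$, $101\mapsto010$, $110\mapsto001$ on rows $1,2,3$. The first swap sends $c^{(1)}$ to $(1,0,0,1,2)$, whose restriction $(1,0,1)$ to $\{1,2,4\}$ is forbidden by $B_{1,2,4}$; the second sends $c^{(2)}$ to $(0,1,0,2,0)$, whose restriction $(0,1,0)$ to $\{1,2,5\}$ is forbidden by $B_{1,2,5}$; the third sends $c^{(3)}$ to $(0,0,1,1,2)$, whose restriction $(0,0,1)$ to $\{1,2,4\}$ is forbidden by $B_{1,2,4}$. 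So all three uniform swaps destroy validity, and no choice of $B'_{1,2,3}$ together with the uniform column-swap can produce an $A'\in\Avoid(5,3,\mathcal{B}')$ with $|A'|=|A|$. Whether the underlying inequality $\forb(m,r,\mathcal{B})\le\max_{\mathcal{B}'\text{ good}}\forb(m,r,\mathcal{B}')$ is true is a separate question closely tied to Conjecture~\ref{conj}, but the swap argument as proposed cannot establish it, and the paper's block-and-special-block machinery is precisely what sidesteps this obstruction.
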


Let us fix a (possibly bad) choice $\mathcal{B}$ on $[m]$ and a subset $X\subset[m]$. Our goal is to bound $c(\mathcal{B},X)$ in a different way than Lemma \ref{boundonX}. To do this, we first modify the directed multigraph $\mathcal{D}_{\mathcal{B}}(X)$ by adding some more edges according to the following rules whenever possible.  
\begin{enumerate}
    \item If $i,j,k\in X$ are distinct and $ij$,  $jk$ are directed edges, then add directed edge $ik$. 
    \item If $i,j,k\in X$ are distinct, $B_{i,j,k}=I$ and $ij$ is a directed edge, then add directed edge $ik$.
    \item If $i,j,k\in X$ are distinct, $B_{i,j,k}=I^c$ and $ij$ is a directed edge, then add directed edge $kj$.
\end{enumerate}
Denote this new directed multigraph by $\overline{\mathcal{D}}_{\mathcal{B}}(X)$. It is easy to see that each directed edge in $\overline{\mathcal{D}}_{\mathcal{B}}(X)$ is indeed a $0$-implication on $X$ according to Definition \ref{def:0-imp}. Also note that Rule 1 above guarantees that $\overline{\mathcal{D}}_{\mathcal{B}}(X)$ is transitive.

For not necessarily distinct $i,j\in X$, we say they are \textit{independent}, and denote it as $i\sim j$, if there exists no directed edge between them in $\overline{\mathcal{D}}_{\mathcal{B}}(X)$.

\begin{lemma}\label{prop:nonedgeblocks}
The relation $\sim$ is an equivalence relation on $X$.
\end{lemma}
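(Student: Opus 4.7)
The plan is to verify reflexivity, symmetry, and transitivity separately. Reflexivity and symmetry are immediate from the construction: the multigraph $\overline{\mathcal{D}}_{\mathcal{B}}(X)$ contains no self-loops (the edges of $\mathcal{D}_{\mathcal{B}}(X)$ as well as those produced by Rules~1--3 all run between \emph{distinct} vertices of $X$), and the condition ``no directed edge between them'' is manifestly symmetric in its two arguments. So the substantive content is transitivity.

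Given $i\sim j$ and $j\sim k$, I would first argue that $B_{i,j,k}\in\{I,I^c\}$. If instead $B_{i,j,k}$ were a configuration of $A_1$ or $A_2$, then by Definition~\ref{directedmultigraph} the triple $\{i,j,k\}$ already contributes two directed edges to $\mathcal{D}_{\mathcal{B}}(X)\subseteq\overline{\mathcal{D}}_{\mathcal{B}}(X)$ with all endpoints in $\{i,j,k\}$; these two edges share a common tail (in the $A_1$ case) or a common head (in the $A_2$ case) but together touch all three vertices, so at least one of them must be incident to $j$ and therefore connect $j$ to a vertex in $\{i,k\}$, contradicting $i\sim j$ or $j\sim k$.

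With $B_{i,j,k}\in\{I,I^c\}$ established, I would suppose for contradiction that some directed edge $t\to h$ exists between $i$ and $k$ in $\overline{\mathcal{D}}_{\mathcal{B}}(X)$, where $\{t,h\}=\{i,k\}$. The labels $i,j,k$ in Rules~2 and~3 are placeholders for an arbitrary ordering of the three vertices of a triple, so I may instantiate them as $(t,h,j)$. If $B_{i,j,k}=I$, then Rule~2 under this instantiation upgrades the edge $t\to h$ to an additional directed edge $t\to j$ in $\overline{\mathcal{D}}_{\mathcal{B}}(X)$; since $t\in\{i,k\}$, this contradicts $i\sim j$ or $j\sim k$. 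If $B_{i,j,k}=I^c$, then Rule~3 under the same instantiation produces a directed edge $j\to h$, and since $h\in\{i,k\}$ this likewise contradicts $i\sim j$ or $j\sim k$. Hence no directed edge between $i$ and $k$ can exist in $\overline{\mathcal{D}}_{\mathcal{B}}(X)$, so $i\sim k$.

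The only place where care is needed is the symmetric application of Rules~2 and~3: although they are stated with fixed symbol names, those symbols stand for any ordering of the three vertices in a triple, and one must pick the ordering (tail, head, third vertex) that matches the hypothetical edge $t\to h$ so that the rule delivers an edge incident to $j$. Once the right instantiation is chosen, both possible orientations of the edge between $i$ and $k$ are handled uniformly by the two cases $B_{i,j,k}=I$ and $B_{i,j,k}=I^c$, and the contradiction follows.
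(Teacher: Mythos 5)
Your proof is correct and follows the same approach as the paper: reflexivity and symmetry are immediate, and for transitivity you deduce $B_{i,j,k}\in\{I,I^c\}$ and then apply Rules 2 and 3 to a hypothetical directed edge between $i$ and $k$ to derive a contradiction with $i\sim j$ or $j\sim k$. The one small improvement you make is spelling out why $B_{i,j,k}$ must be $I$ or $I^c$ (the paper merely asserts it), and your $t,h$ bookkeeping is just a more explicit version of the paper's ``without loss of generality assume $ik$ is a directed edge.''
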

\begin{proof}
$\sim$ is trivially symmetric. $\sim$ is reflexive as the rules above never result in loops. Assume now that $i\sim j$ and $j\sim k$ for distinct $i,j,k\in X$, which implies that $B_{i,j,k}=I\text{ or }I^c$ in $\mathcal{B}$. Suppose $i\not\sim k$. Without loss of generality assume $ik$ is a directed edge in $\overline{\mathcal{D}}_{\mathcal{B}}(X)$. If $B_{i,j,k}=I$, then Rule 2 above implies the existence of a directed edge $ij$, contradicting $i\sim j$. If $B_{i,j,k}=I^c$, then Rule 3 above implies the existence of a directed edge $jk$, contradicting $j\sim k$.
\end{proof}
Call any equivalence class in $X$ under relation $\sim$ with respect to $\mathcal{B}$ a \emph{block}. Observe that if $i$ and $j$ are in different blocks, then there is a directed edge between them in $\overline{\mathcal{D}}_{\mathcal{B}}(X)$. 
\begin{lemma}\label{prop:samedirection}
Let $A, B$ be two blocks in $X$. If at least one of $A,B$ has size at least 2, then edges between $A$ and $B$ all go in the same direction.
\end{lemma}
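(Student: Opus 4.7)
The plan is to argue by contradiction, exploiting the transitivity of $\overline{\mathcal{D}}_{\mathcal{B}}(X)$ enforced by Rule 1. Without loss of generality I will assume $|A|\geq 2$. Suppose that the edges between $A$ and $B$ do not all go in the same direction; then there exist $a_1,a_2\in A$ and $b_1,b_2\in B$ (not necessarily distinct) with directed edges $a_1\to b_1$ and $b_2\to a_2$ in $\overline{\mathcal{D}}_{\mathcal{B}}(X)$. My goal will be to chain these edges through transitivity, possibly with the help of an auxiliary vertex from $A$, to produce a directed edge within $A$ or within $B$, which contradicts Lemma \ref{prop:nonedgeblocks} (since vertices inside a block are required to be independent).

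I will split into cases according to whether $a_1=a_2$ and whether $b_1=b_2$. If $a_1\neq a_2$, then when $b_1=b_2=b$ the chain $a_1\to b\to a_2$ directly gives $a_1\to a_2$ by Rule 1, and when $b_1\neq b_2$ the edge between $a_1$ and $b_2$ yields either $a_1\to b_2\to a_2$ (forcing $a_1\to a_2$) or $b_2\to a_1\to b_1$ (forcing $b_2\to b_1$), both contradictions. The more delicate case is $a_1=a_2=a$; here I will invoke $|A|\geq 2$ to pick an auxiliary $a'\in A\setminus\{a\}$ and examine the edges between $a'$ and $b_1,b_2$. When $b_1=b_2=b$, either direction of the edge between $a'$ and $b$ chains with one of $a\to b$, $b\to a$ to give a forbidden within-$A$ edge between $a$ and $a'$. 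When $b_1\neq b_2$, ruling out $a'\to b_2$ (which would give $a'\to b_2\to a$, hence $a'\to a$) and ruling out $b_1\to a'$ (which would give $a\to b_1\to a'$, hence $a\to a'$) leaves $b_2\to a'$ and $a'\to b_1$, whence $b_2\to a'\to b_1$ produces the contradiction $b_2\to b_1$ inside $B$.

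The main obstacle is not conceptual but purely bookkeeping: Rule 1 requires three pairwise distinct vertices, so the hypothesis that at least one of $A,B$ has size $\geq 2$ enters precisely to supply the auxiliary vertex $a'$ needed to carry out transitivity when $a_1=a_2$, preventing the argument from degenerating into a vacuous self-loop derivation in that case. Note also that no symmetric use of $|B|\geq 2$ is required in my plan, since I handle the symmetric situation by relabelling $A\leftrightarrow B$ at the outset if necessary.
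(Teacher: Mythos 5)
Your proof is correct and follows essentially the same approach as the paper's: chain directed edges via the transitivity rule of $\overline{\mathcal{D}}_{\mathcal{B}}(X)$ to produce a within-block edge and contradict Lemma \ref{prop:nonedgeblocks}, with your case analysis making explicit the step the paper compresses into the terse assertion that one can find, after relabelling, $a_1,a_2\in A$ and $b\in B$ with directed edges $a_1b$ and $ba_2$. One small redundancy: in the sub-case $a_1=a_2=a$ with $b_1\neq b_2$, the direct chain $b_2\to a\to b_1$ already forces $b_2\to b_1$ inside $B$, so the auxiliary vertex $a'$ is unnecessary there (it is genuinely needed only when $a_1=a_2$ and $b_1=b_2$).
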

\begin{proof}
Suppose not all edges between blocks $A$ and $B$ go in the same direction, then we can find 3 vertices, without loss of generality $a_1,a_2\in A$ and $b\in B$, such that we have directed edges $a_1b$ and $ba_2$. By the transitivity rule, we also have directed edge $a_1,a_2$, contradicting $A$ is a block. 
\end{proof}

Using a well-known result on tournaments, this implies that we can order the blocks in $\overline{\mathcal{D}}_{\mathcal{B}}(X)$ as $D_1,D_2,\cdots ,D_t$, so that for all $\gamma<\gamma'$ in $[t]$ and all $i\in D_\gamma,j\in D_\gamma'$, there is a directed $ij$ edge in $\overline{\mathcal{D}}_{\mathcal{B}}(X)$. If $D_\gamma,D_{\gamma'}$ are singleton blocks, then there might be a directed $ji$ edge as well, but we will not use them in the proofs below.

To get an upper bound on $c(\mathcal{B},X)$, the following observation will be useful.
\begin{lemma}\label{prop:IorIcbetween blocks}
Let $D$ be a block and let $i,j\in D$ be distinct. Furthermore suppose $k$ is in a block before $D$ in the ordering defined above. Then $B_{k,i,j}\neq I^c$. Similarly, if $k'$ is in a block after $D$, then $B_{i,j,k'}\neq I$.
\end{lemma}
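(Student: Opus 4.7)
The plan is to derive both claims by a direct application of the closure rules (Rules 1--3) used to define $\overline{\mathcal{D}}_{\mathcal{B}}(X)$, producing in each case a directed edge between $i$ and $j$ that contradicts $i \sim j$, i.e.\ the defining property of $i$ and $j$ lying in the same block.

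For the first claim, I would start by observing that $k \in D_\beta$ with $\beta < \gamma$, together with $i,j \in D = D_\gamma$, forces both $ki$ and $kj$ to be directed edges of $\overline{\mathcal{D}}_{\mathcal{B}}(X)$ by the block ordering. Now suppose for contradiction that $B_{k,i,j}=I^c$. Since the property $B = I^c$ is invariant under permutations of the three rows (it is determined by the \emph{set} of forbidden columns), Rule~3 applies to the triple $\{k,i,j\}$ with the existing directed edge $ki$ playing the role of the ``$ij$'' edge in the rule statement and $j$ playing the role of the remaining third vertex. The rule then forces a new directed edge $ji$ into $\overline{\mathcal{D}}_{\mathcal{B}}(X)$, contradicting $i \sim j$.

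The second claim is entirely parallel. Now $k' \in D_{\gamma'}$ with $\gamma' > \gamma$ yields directed edges $ik'$ and $jk'$ in $\overline{\mathcal{D}}_{\mathcal{B}}(X)$. Supposing $B_{i,j,k'} = I$, I would apply Rule~2 to the triple $\{i,j,k'\}$ with $ik'$ as the ``$ij$'' edge of the rule and $j$ as the third vertex; the rule then produces a directed edge $ij$, once again contradicting $i \sim j$. I do not foresee any real difficulty here: the argument is essentially a bookkeeping check that the closure rules can be instantiated at the triples $\{k,i,j\}$ and $\{i,j,k'\}$, and the only substantive point to verify is the permutation-invariance of the properties $B = I$ and $B = I^c$, which legitimises the relabelling of $i, j, k$ in Rules~2 and~3 to match the specific vertices supplied by the lemma's hypotheses.
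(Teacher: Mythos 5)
Your argument is correct and follows the same approach as the paper: in both cases you invoke the closure rules (Rule 3 for $I^c$, Rule 2 for $I$) applied to the triple together with the known directed edges from the block ordering, and read off a directed edge between $i$ and $j$ that contradicts $i\sim j$. The paper derives both directed edges $ij$ and $ji$ (using both available cross-block edges $ki,kj$ or $ik',jk'$), while you derive just one, but a single such edge already suffices for the contradiction, so the difference is purely cosmetic; and your explicit note that $B=I$ and $B=I^c$ are permutation-invariant (so the relabelling of the rule's generic triple is legitimate) is an implicit assumption in the paper that you have simply surfaced.
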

\begin{proof}
If $k$ is in a block before $D$, then we have directed $ki$ and $kj$ edges. If $B_{k,i,j}=I^c$, then by Rule 3 we also have directed edges $ij,ji$, contradicting $i\sim j$. Similarly, if $k'$ is in a block after $D$, then we have directed $ik'$ and $jk'$ edges. If $B_{i,j,k'}=I$, then by Rule 2 we also have directed edges $ij,ji$, again contradicting $i\sim j$. 
\end{proof}

We can now obtain a preliminary upper bound on $c(\mathcal{B},X)$. Let $b(\mathcal{B},X)$ denote the number of blocks of size at least 2 on $X$ with respect to the choice $\mathcal{B}$.
\begin{lemma}\label{lem:blockbound}
For every choice $\mathcal{B}$ on $[m]$ and $X\subset[m]$, we have 
\begin{equation}\label{eq:blockbd}
 c(\mathcal{B},X)\le |X|+b(\mathcal{B},X)+1.   
\end{equation}
\end{lemma}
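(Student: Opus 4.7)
The plan is to classify the valid columns $c \in \mathcal{C}(\mathcal{B}, X)$ by how the zero set $Z(c) := \{v \in X : c_v = 0\}$ interacts with the block decomposition $D_1, \ldots, D_t$ of $X$, and then bound the possibilities block by block.

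\textbf{Step 1 (reduction to a single block).} If $Z(c) = \emptyset$ then $c$ is the all-ones column, which is always valid; this gives the ``$+1$'' in the bound. Otherwise let $\gamma^*(c)$ be the least index $\gamma$ with $D_\gamma \cap Z(c) \neq \emptyset$. Using only the forward edges of the block ordering, $c$ must be identically $1$ on blocks $D_{\gamma'}$ with $\gamma' < \gamma^*$ and identically $0$ on blocks with $\gamma' > \gamma^*$, so $c$ is determined by $\gamma^*$ together with $Z'(c) := Z(c) \cap D_{\gamma^*}$, a nonempty subset of $D_{\gamma^*}$.

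\textbf{Step 2 (per-block bound).} I will show that for every block $D_\gamma$ the number of valid nonempty $Z' \subseteq D_\gamma$ arising in Step 1 is at most $|D_\gamma| + 1$ when $|D_\gamma| \geq 2$ and at most $1$ when $|D_\gamma| = 1$. Summing over $\gamma$ and adding the all-ones column gives
\[
c(\mathcal{B}, X) \leq 1 + \sum_{|D_\gamma|=1} 1 + \sum_{|D_\gamma| \geq 2}(|D_\gamma| + 1) = 1 + |X| + b(\mathcal{B}, X),
\]
as required. Setting $S := D_\gamma \setminus Z'$, it suffices to bound the number of proper $S \subsetneq D_\gamma$ that are \emph{internally valid}, meaning $|S \cap T| \neq 1$ for every triple $T \subseteq D_\gamma$ with $B_T = I$ and $|S \cap T| \neq 2$ for every such triple with $B_T = I^c$. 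Every triple inside $D_\gamma$ has $B_T \in \{I, I^c\}$, since $i \sim j$ for all $i, j \in D_\gamma$ forbids any $0$-implication within $D_\gamma$; constraints from triples meeting $D_\gamma$ only partially can only decrease the count further.

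\textbf{Step 3 (all nonempty internally valid sets differ by one swap).} The heart of the argument is to show that any two distinct nonempty internally valid proper subsets $S_1, S_2 \subsetneq D_\gamma$ satisfy $|S_1 \Delta S_2| = 2$. Suppose $a \in S_1 \setminus S_2$ and $b \in S_2 \setminus S_1$; for any $w \in D_\gamma \setminus \{a, b\}$, the triple $T = \{a, b, w\}$ has $|S_1 \cap T|$ equal to $1$ or $2$ according to whether $w \notin S_1$ or $w \in S_1$, and analogously for $S_2$. A case check on whether $w$ lies in $S_1, S_2$, both or neither shows that if $w$ belongs to exactly one of them then $S_1$ forces $B_T = I$ while $S_2$ forces $B_T = I^c$ (or vice versa), a contradiction. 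Hence $S_1$ and $S_2$ agree off $\{a, b\}$, proving the swap claim. A parallel short argument rules out $S_2 \subsetneq S_1$ with $S_2 \neq \emptyset$: picking $a \in S_1 \setminus S_2$, $w \in S_2$, $b \in D_\gamma \setminus S_1$ gives $|S_1 \cap T| = 2$ and $|S_2 \cap T| = 1$ at $T = \{a, b, w\}$, another $I$-vs-$I^c$ clash. Consequently all nonempty internally valid proper $S$ have a common size $k$ and pairwise intersect in exactly $k - 1$ elements.

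\textbf{Step 4 (bounding the Johnson-graph clique).} Such a family is a clique in the Johnson graph $J(|D_\gamma|, k)$. Fix one element $S^*$ of this family and encode each other member by its swap pair $(a, b) \in S^* \times (D_\gamma \setminus S^*)$; the swap condition forces any two pairs to share a coordinate (otherwise the symmetric difference would be $4$), so the pairs form a star in a bipartite graph. This bounds the clique by $\max(|D_\gamma| - k + 1, k + 1) \leq |D_\gamma|$, and adding back the empty set (which is always internally valid) yields at most $|D_\gamma| + 1$ internally valid proper subsets, completing Step 2. I expect the main obstacle to be Step 3: one has to carefully track how a candidate $S$ determines the ``color'' $B_T \in \{I, I^c\}$ for each triple involving a change-pair, and extract the contradiction when two candidates demand incompatible colors. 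Once that structural input is in hand, Step 4 is a short Johnson-clique count.
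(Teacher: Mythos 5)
Your proof is correct, and after the shared first step (grouping valid columns by the first block containing a zero), it diverges from the paper's argument in how the per-block count is bounded. The paper does a direct case analysis on whether some valid column has at least two 0's and two 1's inside the block $D_\gamma$: in the affirmative case it uses that column's zero-set $L$ and one-set $R$ to force every other valid column to be constant on $L$ and on $R$, and in the negative case it argues via a column with a single zero that every other column is all-zero or has at most one zero. Your argument instead establishes a uniform structural fact: any two distinct nonempty one-sets $S_1, S_2$ of internally valid columns must satisfy $|S_1 \Delta S_2| = 2$, with containment ruled out by a parallel triple argument; this identifies the nonempty valid one-sets as a maximal-intersection clique in the Johnson graph $J(|D_\gamma|, k)$, which the pairwise-sharing condition on swap pairs forces to be a star, giving the bound $|D_\gamma| + 1$ after adding back $\emptyset$. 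Both proofs hinge on the same input (every triple inside a block has $B_T \in \{I, I^c\}$), but your route replaces the paper's two-case column analysis with a single structural lemma about swap-distance, which exposes the Johnson-clique structure of valid columns more explicitly; the paper's version is more elementary and requires less counting machinery but is somewhat more ad hoc. One small presentational caveat: your Step 3 argument implicitly needs $|D_\gamma| \geq 3$ for the triple $T = \{a,b,w\}$ to exist, so one should note (as you in effect do by checking the degenerate cases) that the claim is vacuous for $|D_\gamma| \leq 2$.
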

\begin{proof}
Let the blocks in $X$ be ordered as $D_1,D_2,\cdots,D_t$ as before. For each $\gamma\in[t]$, let $C(\mathcal{B},X,\gamma)$ be the set of columns in $C(\mathcal{B},X)$ that have their first 0 in block $D_{\gamma}$. We claim that $|C(\mathcal{B},X,\gamma)|\leq|D_\gamma|+1$ if $|D_\gamma|\geq 2$. Indeed, every column $c\in C(\mathcal{B},X,\gamma)$ contains only 1's in each block $D_\beta$ for $1\le \beta<\gamma$ and only 0's in each block $D_{\beta'}$ for $\gamma<\beta'\le t$. If $|D_{\gamma}|=1$, then there is at most $1=|D_{\gamma}|$ such column. If $|D_{\gamma}|=2$, then there are at most $3=|D_{\gamma}|+1$ such columns, namely those that are $00,10$ or $01$ on $D_\gamma$. 

Now assume $|D_{\gamma}|\geq3$. Since $D_\gamma$ is a block, for every distinct $i,j,k\in D_{\gamma}$ we have $B_{i,j,k}=I\text{ or }I^c$.

\mbox{}

\noindent\textbf{Case 1.} There is a column $c\in C(\mathcal{B},X,\gamma)$ that contains at least two 0's and at least two 1's on $D_{\gamma}$. Let $L\subset D_{\gamma}$ be the set of row indices on which $c$ is 0, while $R\subset D_{\gamma}$ be the set of indices on which $c$ is 1. Then for all distinct $i,j,k\in D_\gamma$, we have
\begin{enumerate}[(i)]
    \item if $|\{i,j,k\}\cap L|=2$ then $B_{i,j,k}=I^c$,
    \item if $|\{i,j,k\}\cap R|=2$ then $B_{i,j,k}=I$. 
\end{enumerate}
We claim that in this case every column $c'\in C(\mathcal{B},X,\gamma)$ is constant on $L$ and on $R$. Indeed, if there exist $i,j\in L$ such that $c'_i=0,c'_j=1$, then $c'$ is constantly 0 on $R$ by (i). But then for any $k,k'\in R$, $c'_j=1,c'_k=c'_{k'}=0$, contradicting (ii). Similarly, if there exist $j,k\in R$ such that $c'_j=0,c'_k=1$, then $c'$ is constantly 1 on $L$ by (ii). But then for any $i,i'\in L$, $c'_j=0,c'_i=c'_{i'}=1$, contradicting (i).  Hence, there are at most $3\leq|D_\gamma|+1$ columns in $C(\mathcal{B},X,\gamma)$, namely the one that is 0 on both $L$ and $R$, the one that is 0 on $L$, 1 on $R$, and the one that is 1 on $L$ and 0 on $R$.

\mbox{}

\noindent\textbf{Case 2.} No column in $C(\mathcal{B},X,\gamma)$ contains at least two 0's and two 1's on $D_{\gamma}$. We claim that $|C(\mathcal{B},X,\gamma)|\leq|D_{\gamma}|+1$. This is true if all columns in $C(\mathcal{B},X,\gamma)$ contain at least two 0's on $D_\gamma$, as then all of them contain at most one 1 on $D_\gamma$. Otherwise, there is a column $c\in C(\mathcal{B},X,\gamma)$ that has only one 0, say at $i\in D_\gamma$. For all other distinct $j,k\in D_\gamma$, $c_j=c_k=1$, so $B_{i,j,k}=I$. Let $c'$ be any other column in $C(\mathcal{B},X,\gamma)$. If $c'$ has at least two 0's and a 1 on $D_{\gamma}$, then it has exactly one 1 on $D_{\gamma}$, say at $i'$. If $i'=i$, then for any other distinct $j,k\in D_\gamma$, $c'_i=1,c'_j=c'_k=0$, contradicting $B_{i,j,k}=I$. If $i'\not=i$, then for any other $j\in D_\gamma$, $c'_i=c'_j=0,c'_{i'}=1$, again contradicting $B_{i,i',j}=I$. Hence, any such $c'$ is either all 0 or has one $0$, so there are at most $|D_\gamma|+1$ such columns in this case as well. 

\mbox{}

Putting all these together, we have
\[
 c(\mathcal{B},X)=1+\sum_{\gamma=1}^t|C(\mathcal{B},X,\gamma)|\leq 1+\sum_{\gamma=1}^t|D_{\gamma}|+b(\mathcal{B},X)=|X|+b(\mathcal{B},X)+1,\]
where the sum counts all the columns with at least one 0 on $X$, and the extra 1 is the column with all 1's on $X$.
\end{proof}

We need a strengthening of bound (\ref{eq:blockbd}). Let us call a block $D_{\gamma}$ \emph{special} if there exist distinct $i,j\in D_\gamma$ such that either there exists some $k$ in a block before $D_\gamma$ satisfying $B_{k,i,j}=I$, or there exists some $k$ in a block after $D_\gamma$ such that $B_{i,j,k}=I^c$. Let $s(\mathcal{B},X)$ be the number of special blocks.
\begin{prop}\label{prop:specblock}
For every choice $\mathcal{B}$ on $[m]$ and $X\subset[m]$, we have
\begin{equation*}
 c(\mathcal{B},X)\le |X|+b(\mathcal{B},X)-s(\mathcal{B},X)+1.   
\end{equation*}
\end{prop}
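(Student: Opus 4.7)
The plan is to extend the case analysis of Lemma~\ref{lem:blockbound} by showing that each special block causes an additional column to be excluded from the Lemma's bound, and to attribute these excluded columns injectively to the special blocks. For each special block $D_\gamma$, I will identify a specific ``witness column'' in $\mathcal{C}(\mathcal{B},X)$ that is counted in Lemma~\ref{lem:blockbound}'s upper bound but is actually invalid due to the specialness condition.

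Concretely, for an (a)-special $D_\gamma$ with witness triple $(k,i,j)$ satisfying $B_{k,i,j}=I$, the column in $C(\mathcal{B},X,\gamma)$ equal to $1$ on every block before $D_\gamma$ and $0$ on $D_\gamma$ and every later block (the ``all-zero on $D_\gamma$'' column) restricts to $(1,0,0)$ on rows $(k,i,j)$, which is a column of $I$, so it is invalid. This is precisely one of the $|D_\gamma|+1$ columns counted in Lemma~\ref{lem:blockbound}'s bound on $|C(\mathcal{B},X,\gamma)|$. For a (b)-special $D_\gamma$ with witness $(i,j,k)$ satisfying $B_{i,j,k}=I^c$ and $k\in D_{\gamma''}$ for some $\gamma''>\gamma$, every column $c\in C(\mathcal{B},X,\gamma'')$ has $c_i=c_j=1$ (since $i,j$ lie in an earlier block), so if $c_k=0$ the pattern on $(i,j,k)$ is $(1,1,0)$, a column of $I^c$, and $c$ is invalid. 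This excludes several columns from $C(\mathcal{B},X,\gamma'')$, including both the ``all-zero on $D_{\gamma''}$'' column and the one-zero column whose only $0$ is at $k$.

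The key combinatorial step is to attribute to each special $D_\gamma$ a distinct excluded column; summing then gives the desired improvement by $s(\mathcal{B},X)$ over Lemma~\ref{lem:blockbound}. I would attribute each (a)-special block its ``all-zero on $D_\gamma$'' column, giving attributions that are automatically distinct across different (a)-specials since the columns differ on $D_\gamma$. For a (b)-special with witness $(i,j,k)$, I would attribute a column in the target block $C(\mathcal{B},X,\gamma'')$ chosen to depend on the witness: for instance, the one-zero column with $0$ only at $k$. Potential conflicts arise when an (a)-special block happens to also be a (b)-target, or when several (b)-specials share a witness or a target block; in these situations one must pick an alternative excluded column (say the all-zero on $D_{\gamma''}$ column, versus the one-zero at $k$ column, versus the one-zero at $k'$ column for a different witness $k'$) to keep the attribution injective.

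The hardest part will be the distinctness argument in the worst case, particularly when multiple (b)-specials share a single witness $k$ in one target block $D_{\gamma''}$: then the set of columns excluded by the single constraint $B_{i,j,k}=I^c$ depends only on $k$, so extra savings must come from elsewhere to cover the surplus specials. I expect to absorb these by revisiting Lemma~\ref{lem:blockbound}'s case analysis applied to the (b)-special blocks themselves; namely, in Sub-case 2.2 the constraint $c_i=c_j=1$ (which holds in $C(\mathcal{B},X,\gamma'')$) already forces the one-zero columns at positions outside $\{i,j\}$ in $D_\gamma$'s own count to be invalid, yielding $|C(\mathcal{B},X,\gamma)|\leq 3$ for $|D_\gamma|\geq 3$, a saving of $|D_\gamma|-2$; and in Case~1 the Lemma's bound of $3$ is already below $|D_\gamma|+1$. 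Going through the sub-cases in this refined manner, together with attributing savings in $D_\gamma$ or $D_{\gamma''}$ block by block, should account for all special blocks and establish the inequality.
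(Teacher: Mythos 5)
Your overall strategy --- charging each special block a column that is "budgeted" in Lemma~\ref{lem:blockbound} but is in fact excluded by the specialness witness --- is the same as the paper's, and your handling of (a)-specials (charge the all-zero-on-$D_\gamma$ column, which has pattern $(1,0,0)$ on $(k,i,j)$) coincides with the paper's construction. Where you diverge is in where the (b)-special charge lands: you send it to the bucket $C(\mathcal{B},X,\gamma'')$ of the block $D_{\gamma''}$ actually containing $k$, whereas the paper sends it to the \emph{adjacent} bucket $C(\mathcal{B},X,\gamma+1)$ by using the column that is $1$ on $D_1,\dots,D_\gamma$ and $0$ thereafter (which has pattern $(1,1,0)$ on $(i,j,k)$ for any $k$ after $D_\gamma$). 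This is not a cosmetic difference: with the paper's choice, distinct special blocks land in consecutive buckets, so the only possible collision is a (b)-special $D_{\gamma-1}$ and an (a)-special $D_\gamma$ both producing the all-ones-then-all-zeros column with boundary at $\gamma$, and the paper's four-case replacement argument cleanly resolves exactly that one collision type.

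Your choice of bucket $\gamma''$ creates collisions that your proposed fixes do not cover. Concretely, suppose $D_1=\{i_1,j_1\}$ and $D_2=\{i_2,j_2\}$ are both size-$2$ blocks that are (b)-special with witnesses $(i_1,j_1,k)$ and $(i_2,j_2,k)$, where $D_3=\{k\}$ is a singleton block after both. Then $s(\mathcal{B},X)=2$ but the bucket $C(\mathcal{B},X,3)$ has budget only $1$ (singleton block, so no $+1$ bonus), and the all-zero and one-zero-at-$k$ columns you propose coincide. Your fallback --- re-running Lemma~\ref{lem:blockbound}'s case analysis on the (b)-special block $D_\gamma$ itself to exclude the one-zero columns at positions outside $\{i,j\}$ --- is valid for $|D_\gamma|\geq 3$ (those columns have $c_i=c_j=1$ and $c_k=0$, hence pattern $(1,1,0)$), but gives nothing when $|D_\gamma|=2$, since no column in $C(\mathcal{B},X,\gamma)$ then satisfies $c_i=c_j=1$. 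So in the example above your argument saves only $1$, not the required $2$. (The bound does hold --- the paper's attribution gives two distinct excluded columns, one in bucket~$2$ and one in bucket~$3$ --- but your accounting does not establish it.) To repair this you would essentially need to switch the (b)-special charge to bucket $\gamma+1$, i.e.\ adopt the paper's construction, and then you would still face the (b)-followed-by-(a) coincidence that the paper's case analysis is designed to resolve.
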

\begin{proof}
Order the blocks $D_1,D_2,\cdots,D_t$ in $X$ as before. Note that by definition every special block has size at least 2. Let $S$ be the set of indices $\gamma\in[t]$ such that $D_\gamma$ is a special block. Let $S'\subset S$ be the set of indices $\gamma$ such that $D_\gamma$ is a special block of size at least 3 that falls under Case 1 in Lemma \ref{lem:blockbound}; in other words, $\gamma \in S'$ if $C(\mathcal{B},X,\gamma)$ contains a column with at least two 0's and at least two 1's on $D_\gamma$.

For each special block $D_\gamma$ satisfying $\gamma\in S'$, we actually showed in the proof of Lemma \ref{lem:blockbound} that the number of columns in $C(\mathcal{B},X,\gamma)$ is at most $3\leq|D_\gamma|<|D_\gamma|+1$. This improves the bound on $c(\mathcal{B},X)$ in Lemma \ref{lem:blockbound} by at least $|S'|$.

Now for each $\gamma\in S\setminus S'$, we will find a column $c^\gamma$ that is counted in the upper bound proof of Lemma \ref{lem:blockbound}, but is actually not in $C(\mathcal{B},X)$. Note that we will not require that $c^\gamma\in C(\mathcal{B},X,\gamma)$, meaning $c^\gamma$ need not have its first 0 in $D_\gamma$, but we will make sure that these $c^\gamma$ are distinct for different $\gamma\in S\setminus S'$. Therefore, the collection $\{c^\gamma\colon\gamma\in S\setminus S'\}$ of columns together will improve the bound on $c(\mathcal{B},X)$ in Lemma \ref{lem:blockbound} by $|S\setminus S'|$. We find $c^\gamma$ as follows. Since $D_\gamma$ is special, we can find $i,j\in D_\gamma$, and either some $k$ in a block before $D_\gamma$ satisfying $B_{k,i,j}=I$ or some $k$ in a block after $D_\gamma$ with $B_{i,j,k}=I^c$.

If $k$ is in some block before $D_\gamma$, let $c^\gamma$ be the column given by the following table. Note that this column $c^\gamma$ is counted in the upper bound proof of Lemma \ref{lem:blockbound}, but is not in $C(\mathcal{B},X)$ because $B_{k,i,j}=I$.
\[
\begin{tabular}{|c||c|c|}
\hline\multirow{2}{1em}{\centering$x$} & $x\in D_\beta, \beta<\gamma$ & $x\in D_{\beta'},\beta'\geq\gamma$\\
& (including $x=k$) & (including $x=i,j$) \\\hline
$c^\gamma_x$ & 1 & 0\\\hline
\end{tabular}
\]
Suppose instead $k$ is in some block after $D_{\gamma}$. Let $c^\gamma$ be the column given by the following table. Again, $c^\gamma$ is counted in the upper bound proof of Lemma \ref{lem:blockbound}, but is not in $C(\mathcal{B},X)$ because $B_{i,j,k}=I^c$.
\[
\begin{tabular}{|c||c|c|}
\hline\multirow{2}{1em}{\centering$x$} & $x\in D_\beta, \beta\leq\gamma$ & $x\in D_{\beta'},\beta'>\gamma$\\
& (including $x=i,j$) & (including $x=k$) \\\hline
$c^\gamma_x$ & 1 & 0\\\hline
\end{tabular}
\]
Note that for $\gamma'<\gamma$ in $S\setminus S'$, columns $c^{\gamma'}$ and $c^\gamma$ obtained in this way are distinct, unless $\gamma'=\gamma-1$ and there exist $i,j\in D_{\gamma-1}$, $i',j'\in D_\gamma$, $k$ in some block after $D_{\gamma-1}$ (possibly equal to $i'$ or $j'$) and $k'$ in some block before $D_\gamma$ (possibly equal to $i$ or $j$), such that $D_{\gamma-1}$ is special because $B_{i,j,k}=I^c$ and $D_\gamma$ is special because $B_{k',i',j'}=I$. In what follows, for each pair $\gamma-1,\gamma\in S\setminus S'$ satisfying $c^{\gamma-1}=c^\gamma$, we replace $c^\gamma$ with a different column that is also counted in the upper bound proof of Lemma \ref{lem:blockbound} but not in $C(\mathcal{B},X)$, so that the new collection $\{c^\gamma\colon\gamma\in S\setminus S'\}$ is actually a set of $|S\setminus S'|$ pairwise distinct columns. We need to separate into several cases.

\mbox{}

\noindent\textbf{Case 1.} $k$ is in a block after $D_\gamma$. In this case, consider the two columns defined below. 
\[
\begin{tabular}{|c||c|c|c|c|}
\hline\multirow{2}{1em}{\centering$x$} & $x\in D_\beta, \beta\leq\gamma-1$ & $x\in D_\gamma, x\not=j'$ & \multirow{2}{3em}{\centering$x=j'$}& $x\in D_{\beta'},\beta'\geq\gamma+1$\\
& (including $x=i,j,k'$) & (including $x=i'$) & & (including $x=k$) \\\hline
$\overline{c}^\gamma_x$ & 1 & 1 & 0 & 0\\\hline
\end{tabular}
\]
\[
\begin{tabular}{|c||c|c|c|c|}
\hline\multirow{2}{1em}{\centering$x$} & $x\in D_\beta, \beta\leq\gamma-1$ & $x\in D_\gamma, x\not=j'$ & \multirow{2}{3em}{\centering$x=j'$}& $x\in D_{\beta'},\beta'\geq\gamma+1$\\
& (including $x=i,j,k'$) & (including $x=i'$) & & (including $x=k$) \\\hline
$\tilde{c}^\gamma_x$ & 1 & 0 & 1 & 0\\\hline
\end{tabular}
\]
Neither $\overline{c}^\gamma_x$ nor $\tilde{c}^\gamma_x$ is in $C(\mathcal{B},X)$ as $B_{i,j,k}=I^c$. Both $\overline{c}^{\gamma}_x$ and $\tilde{c}^{\gamma}_x$ have their first 0 in block $D_{\gamma}$, where $\overline{c}^{\gamma}$ is all 1 except $\overline{c}^{\gamma}_{j'}=1$, while $\tilde{c}^{\gamma}$ is all 0 except $\tilde{c}^{\gamma}_{j'}=1$. Hence, one of $\overline{c}^{\gamma},\tilde{c}^{\gamma}$ is counted in the upper bound proof of Lemma \ref{lem:blockbound}. Replace $c^{\gamma}$ with this one.

\mbox{}

\noindent\textbf{Case 2.} $k\in D_{\gamma}$, $k\not=i',j'$. Once again, consider the two columns defined below. 
\[
\begin{tabular}{|c||c|c|c|c|}
\hline\multirow{2}{1em}{\centering$x$} & $x\in D_\beta, \beta\leq\gamma-1$ & $x\in D_{\gamma}, x\not=k$ & \multirow{2}{3em}{\centering$x=k$} & \multirow{2}{8.5em}{\centering$x\in D_{\beta'}, \beta'\geq\gamma+1$}\\
& (including $x=i,j,k'$) & (including $x=i',j'$) & & \\\hline
$\overline{c}^{\gamma}_x$ & 1 & 1 & 0 & 0\\\hline
\end{tabular}
\]
\[
\begin{tabular}{|c||c|c|c|c|}
\hline\multirow{2}{1em}{\centering$x$} & $x\in D_\beta, \beta\leq\gamma-1$ & $x\in D_{\gamma}, x\not=k$ & \multirow{2}{3em}{\centering$x=k$}& \multirow{2}{8.5em}{\centering$x\in D_{\beta'},\beta'\geq\gamma+1$}\\
& (including $x=i,j,k'$) & (including $x=i',j'$) & & \\\hline
$\tilde{c}^{\gamma}_x$ & 1 & 0 & 1 & 0\\\hline
\end{tabular}
\]
Note that $\overline{c}^{\gamma}_x$ is not in $C(\mathcal{B},X)$ as $B_{i,j,k}=I^c$ and $\tilde{c}^{\gamma}_x$ is not in $C(\mathcal{B},X)$ as $B_{k',i',j'}=I$. Both $\overline{c}^{\gamma}_x$ and $\tilde{c}^{\gamma}_x$ have their first 0 in block $D_{\gamma}$, where $\overline{c}^{\gamma}$ is all 1 except $\overline{c}^{\gamma}_{k}=0$, while $\tilde{c}^{\gamma}$ is all 0 except $\tilde{c}^{\gamma}_{k}=1$. Hence, one of $\overline{c}^{\gamma},\tilde{c}^{\gamma}$ is counted in the upper bound proof of Lemma \ref{lem:blockbound}. Replace $c^{\gamma}$ with this one.

\mbox{}

\noindent\textbf{Case 3.} $k\in D_{\gamma}$, $k=i'$ or $j'$, and $|D_{\gamma}|=3$. Without loss of generality, say $k=j'$, and let $\ell$ be another index in $D_{\gamma}$ not equal to $i',j',k$. Again, consider the two columns defined below. 
\[
\begin{tabular}{|c||c|c|c|c|}
\hline\multirow{2}{1em}{\centering$x$} & $x\in D_\beta, \beta\leq\gamma-1$ & $x\in D_{\gamma}, x\not=j',k$ & \multirow{2}{5em}{\centering$x=j'=k$} & \multirow{2}{8.5em}{\centering$x\in D_{\beta'}, \beta'\geq\gamma+1$}\\
& (including $x=i,j,k'$) & (including $x=i',\ell$) & & \\\hline
$\overline{c}^{\gamma}_x$ & 1 & 1 & 0 & 0\\\hline
\end{tabular}
\]
\[
\begin{tabular}{|c||c|c|c|c|}
\hline\multirow{2}{1em}{\centering$x$} & $x\in D_\beta, \beta\leq\gamma-1$ & $x\in D_{\gamma}, x\not=\ell$ & \multirow{2}{3em}{\centering$x=\ell$}& \multirow{2}{8.5em}{\centering$x\in D_{\beta'},\beta'\geq\gamma+1$}\\
& (including $x=i,j,k'$) & (including $x=i',x=j'=k$) & & \\\hline
$\tilde{c}^{\gamma}_x$ & 1 & 0 & 1 & 0\\\hline
\end{tabular}
\]
Neither $\overline{c}^{\gamma}_x$ nor $\tilde{c}^{\gamma}_x$ is in $C(\mathcal{B},X)$ as $B_{i,j,k}=I^c$. Both $\overline{c}^{\gamma}_x$ and $\tilde{c}^{\gamma}_x$ have their first 0 in block $D_{\gamma}$, where $\overline{c}^{\gamma}$ is all 1 except $\overline{c}^{\gamma}_{k}=1$, while $\tilde{c}^{\gamma}$ is all 0 except $\tilde{c}^{\gamma}_{\ell}=1$. Hence, one of $\overline{c}^{\gamma},\tilde{c}^{\gamma}$ is counted in the upper bound proof of Lemma \ref{lem:blockbound}. Replace $c^{\gamma}$ with this one.

\mbox{}

\noindent\textbf{Case 4.} $k\in D_{\gamma}$, $k=i'$ or $j'$, and $|D_{\gamma}|=2$. Without loss of generality
, assume $k=j'$. Consider the column defined below. 
\[
\begin{tabular}{|c||c|c|c|c|}
\hline\multirow{2}{1em}{\centering$x$} & $x\in D_\beta, \beta\leq\gamma-1$ & \multirow{2}{3em}{\centering$x=i'$} & \multirow{2}{5em}{\centering$x=j'=k$} & \multirow{2}{8.5em}{\centering$x\in D_{\beta'}, \beta'\geq\gamma+1$}\\
& (including $x=i,j,k'$) & & & \\\hline
$\overline{c}^{\gamma}_x$ & 1 & 1 & 0 & 0\\\hline
\end{tabular}
\]
Note that $\overline{c}^{\gamma}_x$ is not in $C(\mathcal{B},X)$ as $B_{i,j,k}=I^c$. Since $|D_{\gamma}|=2$, $\overline{c}^{\gamma}_x$ is counted in the upper bound proof of Lemma \ref{lem:blockbound}. Replace $c^{\gamma}$ with this one.

We claim that after these replacements, all columns $c^\gamma$ for $\gamma\in S\setminus S'$ are pairwise distinct. Indeed, the columns $c^\gamma$ that were unchanged are pairwise distinct, as otherwise one of them would have been replaced. Moreover, all of these unchanged columns are constant on each block. For each column $c^\gamma$ that was updated, it has a unique non-constant block $D_\gamma$. Hence, the updated columns are pairwise distinct and also distinct from all the unchanged ones, proving the claim. Thus, $\{c^\gamma\colon\gamma\in S\setminus S'\}$ is now a set of $|S\setminus S'|$ columns that we have counted in the proof of Lemma \ref{lem:blockbound} that are not actually in $C(\mathcal{B},X)$.

Hence, we have
\begin{align*}
 c(\mathcal{B},X)&=1+\sum_{\gamma=1}^t|C(\mathcal{B},X,\gamma)|\\
 &\le 1+\sum_{\gamma\in[t]\setminus S}|C(\mathcal{B},X,\gamma)|+\sum_{\gamma\in S'}|D_{\gamma}|+\left(\sum_{\gamma\in S\setminus S'}(|D_\gamma|+1)-|S\setminus S'|\right)\\
 &=|X|+b(\mathcal{B},X)+1-|S'|-|S\setminus S'|=|X|+b(\mathcal{B},X)-s(\mathcal{B},X)+1,
\end{align*}
as required.
\end{proof}

We are now ready to prove Theorem \ref{reduce}, which completes the proof of Theorem \ref{main}.
\begin{proof}[Proof of Theorem \ref{reduce}]
Let $\mathcal{B}$ be any bad choice on $[m]$. It suffices to find a good choice $\mathcal{B}'$ on $[m]$ such that for all $X\subset[m]$,
\begin{equation*}
|X|+b(\mathcal{B},X)-s(\mathcal{B},X)+1\le n(\mathcal{B}',X)+|X|+1,
\end{equation*}
where $n(\mathcal{B}',X)$ is calculated as in Lemma \ref{boundonX}. We will do this by replacing each $B_{i,j,k}\in\mathcal{B}$ that is equal to $I$ or $I^c$ with a configuration of $A_2$ to obtain a good choice $\mathcal{B}'$, such that for any $X\subset[m]$, every non-special block on $X$ of size at least 2 contains a non-edge of $\mathcal{D}_{\mathcal{B'}}(X)$. Since there are $b(\mathcal{B},X)-s(\mathcal{B},X)$ non-special block of size at least 2, we get $n(\mathcal{B}',X)\geq b(\mathcal{B},X)-s(\mathcal{B},X)$, as required. 

Order the blocks determined by $\mathcal{B}$ on $[m]$ as usual and call them \emph{original blocks}.
Furthermore, fix an arbitrary ordering of the elements inside each original block so that together with the ordering of the blocks, we obtain a total order on $[m]$. Denote this order by $\prec$. For all triples $i\prec j\prec k$ in $[m]$ such that $B_{i,j,k}=I$ or $I^c$, we replace $B_{i,j,k}$ with $B'_{i,j,k}$, which is a configuration of $A_2$ that corresponds to 0-implications from $i$ to $k$ and from $j$ to $k$. We claim that the choice $\mathcal{B}'$ obtained in this way works. 

Indeed, let $X\subset[m]$ and let $D$ be a non-special block of size at least 2 on $X$ induced by $\mathcal{B}$. First suppose $D=\{i,j\}$ has size 2. Since $D$ is a block, there is no directed $ij$ or $ji$ edge in $\overline{\mathcal{D}}_{\mathcal{B}}(X)$. As every directed edge in $\mathcal{D}_{\mathcal{B}}(X)$ is in $\overline{\mathcal{D}}_{\mathcal{B}}(X)$, there is also no directed $ij$ or $ji$ edge in $\mathcal{D}_{\mathcal{B}}(X)$. Moreover, for any $k$ not in $D$, as $D$ is a non-special block, we have that $B_{i,j,k}$ is a configuration of $A_1$ or $A_2$ that corresponds to no directed $ij$ or $ji$ edge. This means that $B_{i,j,k}=B'_{i,j,k}$ is unchanged for every $k$. Thus, the replacements do not introduce any new directed $ij$ or $ji$ edge, so $ij$ is a non-edge of $\mathcal{D}_{\mathcal{B'}}(X)$ as required.

On the other hand, if $|D|\ge 3$, let $i\prec j$ be the two smallest elements in $D$ with respect to the order $\prec$. As above, there is no directed $ij$ or $ji$ edge in $\mathcal{D}_{\mathcal{B}}(X)$, and $B_{i,j,k}=B'_{i,j,k}$ for all $k\not\in D$. Suppose that $B_{i,j,k}=I\text{ or }I^c$ for some $k\in D$, then $i\prec j\prec k$ and by construction, replacing $B_{i,j,k}$ with $B_{i,j,k}'$ adds directed $ik$ and $jk$ edges to $\mathcal{D}_{\mathcal{B}}(X)$, but not any directed $ij$ or $ji$ edge. Since this is true for all such $k\in D$, $ij$ forms a non-edge in $\mathcal{D}_{\mathcal{B'}}(X)$, as required.
\end{proof}

\section{Upper bounds}\label{uppersection}
In this section, we focus on upper bounds. For simplicity, we write $H(m)=H(m,2)$ and $w(\mathcal{G})=w(\mathcal{G},2)$ throughout this section. We first obtain an upper bound for $H(m)$ by proving Theorem \ref{thm:upperbds}. The values of $H(m)$ and $h(m)=H(m)/m2^{m-1}$ (rounded to 3 decimals) for $1\leq m\leq6$ are given in the table below. 
\begin{table}[h]
\centering
\begin{tabular}{ |c|c|c|c|c|c|c| }
 \hline
 $m$ & 1 & 2 & 3 & 4 & 5 & 6\\
 \hline
 $H(m)$ & 0 & 1 & 4 & 12 & 30 & 73\\
 \hline
 $h(m)$ & 0.000 & 0.250 & 0.333 & 0.375 & 0.375 & 0.380\\
 \hline
\end{tabular}
\end{table}
As the number of TCM on $[m]$ grows exponentially in $m$, it quickly becomes infeasible to calculate $H(m)$ and $h(m)$ by brute force. Instead, we introduce the concept of closed sets in Section \ref{closedsets}, which helps us decompose and better analyse triangular choice multigraphs. In Section \ref{atleast2}, we prove a key lemma which shows that there exists an extremal TCM such that all of its maximal closed sets have size at least 2. Then, in Section \ref{upperproof}, we use this lemma to prove Theorem \ref{thm:upperbds}, and deduce from it Theorem \ref{thm:generalupperbds}, which gives an upper bound on $\forb(m,r,M)$ for all $r\geq 3$.
\subsection{Closed sets}\label{closedsets}
\begin{definition}
Given a triangular choice multigraph $\mathcal{G}$ on $[m]$, a subset $S$ of $[m]$ is closed if for all distinct $x,y\in S$ and $z\not\in S$, the edge $xy$ is chosen in triangle $xyz$.  A closed set $S$ is maximal if the only proper closed subset of $[m]$ containing $S$ is $S$ itself.
\end{definition}

For example, if $\mathcal{G}$ is a TCM on $[m]$, then the empty set, all singletons and $[m]$ itself are all trivially closed. A two element set $\{x,y\}$ is closed if and only if $m_{xy}=m-2$. A three element set $\{x,y,z\}$ is closed if and only if two of $m_{xy},m_{xz},m_{yz}$ are equal to $m-3$ and the other is equal to $m-2$. Figure \ref{fig:ExampleClosedSets} illustrates the closed sets in a TCM with 5 vertices.

The following lemma shows that maximal closed sets provide a nice decomposition of $\mathcal{G}$.

\begin{lemma}\label{disjointclosedset}
Let $S_1, S_2$ be closed sets in $[m]$. If $S_1\cap S_2\neq\emptyset$, then either $S_1\subset S_2$ or $S_2\subset S_1$. Consequently, the maximal closed sets in $[m]$ form a partition of $[m]$.
\end{lemma}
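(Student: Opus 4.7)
The plan is to prove the first statement by contradiction, using the defining property of a triangular choice multigraph that exactly one edge is chosen per triangle. Suppose $S_1 \cap S_2 \neq \emptyset$ but neither containment holds. Then I can pick witnesses $a \in S_1 \setminus S_2$, $b \in S_2 \setminus S_1$, and $c \in S_1 \cap S_2$, and examine the triangle on $\{a,b,c\}$. Closedness of $S_1$, applied to $a,c \in S_1$ and $b \notin S_1$, forces edge $ac$ to be the one chosen in this triangle. Symmetrically, closedness of $S_2$ applied to $b,c \in S_2$ and $a \notin S_2$ forces edge $bc$ to be chosen. Since the TCM definition requires exactly one edge per triangle, this is a contradiction, so one of $S_1, S_2$ must contain the other.

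For the consequence, I would first note that every singleton $\{x\} \subseteq [m]$ is trivially closed, since the closedness condition for $|S|=1$ is vacuous (there are no distinct $x,y \in S$). Hence every element of $[m]$ lies in at least one closed set, and since $[m]$ is finite, in at least one maximal proper closed set (taking a maximal element of the finite poset of proper closed subsets containing $\{x\}$, ordered by inclusion; in the degenerate case $m \le 2$ the statement holds trivially by inspection).

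For uniqueness, suppose $x$ lies in two maximal closed sets $M_1, M_2$. Then $x \in M_1 \cap M_2$, so by the first part either $M_1 \subseteq M_2$ or $M_2 \subseteq M_1$. Maximality then forces $M_1 = M_2$. Therefore the maximal closed sets are pairwise disjoint and cover $[m]$, i.e.\ they form a partition.

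The proof is essentially a one-triangle argument, so there is no real obstacle; the only point requiring mild care is the degenerate case (empty intersection with $[m]$ itself, or very small $m$), which is handled by the observation that singletons are always closed and the ambient set $[m]$ itself is closed, so the existence of a containing maximal closed set is immediate from finiteness.
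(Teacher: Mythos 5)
Your proof is correct and takes essentially the same approach as the paper's: both use the single-triangle argument on a common element $c$ together with witnesses $a \in S_1 \setminus S_2$ and $b \in S_2 \setminus S_1$, deriving a contradiction from the TCM requirement that exactly one edge is chosen per triangle. You additionally spell out the partition consequence (coverage via singletons being closed, pairwise disjointness via the first part and maximality), which the paper asserts without proof; your elaboration is correct.
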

\begin{proof}
Let $x \in S_1 \cap S_2$. If neither $S_1\subset S_2$ nor $S_2\subset S_1$ holds, then there exists $y\in S_1\setminus S_2$ and $z\in S_2\setminus S_1$. Now, $x,y\in S_1$ and $z\not\in S_1$ implies that in triangle $xyz$ edge $xy$ is chosen, while $x,z\in S_2$ and $y\not\in S_2$ implies that in triangle $xyz$ edge $xz$ is chosen, a contradiction.
\end{proof}


\begin{figure}[h!]
    \centering
    \begin{tikzpicture}
    \draw 
        (0, 0) node[circle, black, draw](a){1}
        (2.5, 0) node[circle, black, draw](b){2}
        (5, 0) node[circle, black, draw](c){3}
        (7.5, 0) node[circle, black, draw](d){4}
        (10, 0) node[circle, black, draw](e){5};

    \draw [-] (a) -- node[above] {\small{$m_{12} = 3$}} (b);
    \draw [-] (b) -- node[above] {\small{$m_{23}=2$}} (c);
    \draw [-] (d) -- node[above] {\small{$m_{45}=3$}} (e);

    \draw [-] (a) to [bend right=30] node[below] {$\small{m_{13}=2}$} (c);

    \draw[red, dashed] (2.5,0) ellipse (3.5 and 1.4);
    \draw[densely dotted] (1.25,0) ellipse (1.8 and 0.85);
    \draw[red, dashed] (8.75,0) ellipse (1.9 and 0.9);
    \draw[densely dotted] (4.75, 0) ellipse (6.1 and 1.8);

    \end{tikzpicture}    
    \caption{An example TCM $G$ with its closed sets identified. There are two maximal closed sets, identified in dashed red, with other non-maximal closed sets being singletons (trivially closed) or identified in dotted black.} 
    \label{fig:ExampleClosedSets}
\end{figure}

\subsection{Every maximal closed set has size at least 2}\label{atleast2}
Recall that for every TCM $\mathcal{G}$ on $[m]$, we defined $$w(\mathcal{G})=w(\mathcal{G},2)=\sum_{xy\in[m]^{(2)}}2^{m^{\mathcal{G}}_{xy}},$$ so that $H(m)=H(m,2)=\max\{w(\mathcal{G})\colon\mathcal{G}\text{ is a TCM on }[m]\}$. The goal in this subsection is to prove that there exists a TCM $\mathcal{G}$ on $[m]$ satisfying $w(\mathcal{G})=H(m)$, such that each of its vertices is contained in a closed set of size 2 or 3. In particular, this means that its vertices are partitioned into maximal closed sets of sizes at least 2. The plan is to show that if $\mathcal{G}$ is an extremal TCM on $[m]$, then either all vertices in $\mathcal{G}$ are contained in closed sets of size 2 or 3, or we can change some edge choices to obtain another extremal TCM $\mathcal{G}'$ on $[m]$ that has strictly more vertices contained in closed sets of size 2 or 3.

The next few lemmas identify several types of changes in edge choices we could perform on a TCM $\mathcal{G}$ that does not decrease, or even strictly increase, $w(\mathcal{G})$.

\begin{lemma}\label{lemma:unique-choice}
Let $\mathcal{G}$ be a TCM on $[m]$. Suppose $x,y,z$ are distinct vertices such that $m_{xy}\geq m_{xz}-1$ and edge $xz$ is chosen in triangle $xyz$. Then swapping the edge choice from $xz$ to $xy$ does not decrease $w(\mathcal{G})$, and strictly increases it if $m_{xy}\geq m_{xz}$.

Therefore, if $\mathcal{G}$ is extremal and $x,y,z$ are distinct, then $m_{xy}>m_{xz},m_{yz}$ if and only if $xy$ is the edge chosen in triangle $xyz$.
\end{lemma}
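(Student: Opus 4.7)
The plan is to prove the lemma by a direct arithmetic comparison of $w(\mathcal{G})$ before and after the proposed swap. Let $\mathcal{G}'$ denote the TCM obtained from $\mathcal{G}$ by changing the chosen edge in triangle $xyz$ from $xz$ to $xy$. Then the only multiplicities affected are $m^{\mathcal{G}'}_{xy} = m_{xy} + 1$ and $m^{\mathcal{G}'}_{xz} = m_{xz} - 1$, while every other $m^{\mathcal{G}'}_{uv}$ equals $m^{\mathcal{G}}_{uv}$. Substituting into the definition of $w$,
\[
w(\mathcal{G}') - w(\mathcal{G}) = 2^{m_{xy}+1} + 2^{m_{xz}-1} - 2^{m_{xy}} - 2^{m_{xz}} = 2^{m_{xy}} - 2^{m_{xz}-1}.
\]

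Both parts of the first statement are immediate from this identity: the difference is nonnegative exactly when $m_{xy} \geq m_{xz}-1$, and it is strictly positive exactly when $m_{xy} \geq m_{xz}$. Thus the hypothesis $m_{xy} \geq m_{xz}-1$ guarantees that the swap does not decrease $w$, and the stronger hypothesis $m_{xy} \geq m_{xz}$ guarantees strict increase.

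For the characterization of extremal $\mathcal{G}$, I would split into the two implications and use the strict part of the first statement (together with its obvious counterpart obtained by interchanging the roles of $xy$ and $xz$, or of $xy$ and $yz$). For the reverse direction, if $xy$ is the chosen edge in triangle $xyz$, then swapping to $xz$ gives a weight change of $2^{m_{xz}} - 2^{m_{xy}-1}$, which is strictly positive whenever $m_{xz} \geq m_{xy}$; extremality of $\mathcal{G}$ therefore forces $m_{xz} < m_{xy}$, and the symmetric computation for the swap to $yz$ forces $m_{yz} < m_{xy}$. For the forward direction, suppose $m_{xy} > m_{xz}$ and $m_{xy} > m_{yz}$; then if $xz$ were chosen the strict part of the first statement would give $w(\mathcal{G}') > w(\mathcal{G})$, contradicting extremality, and likewise $yz$ cannot be chosen, so $xy$ must be.

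There is really no serious obstacle here — the whole argument rests on the single identity $w(\mathcal{G}') - w(\mathcal{G}) = 2^{m_{xy}} - 2^{m_{xz}-1}$, after which everything reduces to comparing exponents of $2$. The only thing to be careful about is the bookkeeping convention that $m_{xy}$ and $m_{xz}$ in the hypothesis refer to multiplicities \emph{before} the swap, so that one correctly identifies when the strict inequality in the second half of the lemma kicks in.
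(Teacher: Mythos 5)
Your proof is correct and matches the paper's argument in substance: both compute the weight change $w(\mathcal{G}')-w(\mathcal{G})=2^{m_{xy}}-2^{m_{xz}-1}$, read off the nonnegativity and strictness conditions, and then apply the strict case in both directions to characterize the chosen edge in an extremal TCM. The only cosmetic difference is that the paper phrases the reverse implication as ``if $xy$ is chosen but $m_{xy}\le m_e$ then swapping to $e$ strictly increases $w$,'' whereas you explicitly recompute the weight change for that swap; these are the same calculation.
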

\begin{proof}
For the first part, let $\mathcal{G}'$ be the new TCM obtained after changing the edge choice on $xyz$ from $xz$ to $xy$. Then we have 
\begin{align*}
w(\mathcal{G}')-w(\mathcal{G})&=2^{m_{xz}-1}+2^{m_{xy}+1}-2^{m_{xz}}-2^{m_{xy}}\\
&=2^{m_{xy}}-2^{m_{xz}-1}\\
&\geq2^{m_{xz}-1}-2^{m_{xz}-1}=0,    
\end{align*}
with equality only if $m_{xy}=m_{xz}-1$.

Now assume $\mathcal{G}$ is extremal. If $m_{xy}>m_{xz},m_{yz}$ and the edge $e$ chosen on $xyz$ is $xz$ or $yz$, then by the above swapping the choice from $e$ to $xy$ strictly increases $w(\mathcal{G})$, a contradiction. Conversely, if $xy$ is chosen on $xyz$ but $m_{xy}\leq m_{e}$, where $e$ is $xz$ or $yz$, then again by the above, switching the choice from $xy$ to $e$ strictly increases $w(\mathcal{G})$, a contradiction.  
\end{proof}

\begin{lemma}\label{prop:swap}
Let $\mathcal{G}$ be a TCM on $[m]$, and let $x,y,z,w$ be distinct vertices in $\mathcal{G}$. Assume that $m_{xy}=m_{xz}+1=m_{xw}+1$ and edge $xy$ is chosen in both triangles $xyz$ and $xyw$. Let $\mathcal{G}'$ be the TCM obtained swapping the choice from edge $xy$ to edge $xz$ in triangle $xyz$ and swapping from from $xy$ to $xw$ in triangle $xyw$, then $w(\mathcal{G}')>w(\mathcal{G})$.
\end{lemma}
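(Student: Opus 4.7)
The plan is a direct computation of the change in weight caused by the two swaps, using the fact that only the multiplicities of the three edges $xy$, $xz$, $xw$ are affected.

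First I would observe that the swaps operate only on triangles $xyz$ and $xyw$, and in each of them we are removing a copy of edge $xy$ and adding a copy of either $xz$ or $xw$. Consequently, writing $a=m_{xy}^{\mathcal{G}}$ (so by hypothesis $m_{xz}^{\mathcal{G}}=m_{xw}^{\mathcal{G}}=a-1$, and $a\geq 2$ since the edge $xy$ is chosen in at least the two triangles $xyz$ and $xyw$), the only multiplicities that change from $\mathcal{G}$ to $\mathcal{G}'$ are
\[
m_{xy}^{\mathcal{G}'}=a-2,\qquad m_{xz}^{\mathcal{G}'}=a,\qquad m_{xw}^{\mathcal{G}'}=a.
\]
All other edge multiplicities remain the same.

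Next I would substitute these into $w(\cdot)=\sum_{e}2^{m_e}$. The contribution to $w(\mathcal{G})$ from the three affected edges is $2^{a}+2^{a-1}+2^{a-1}=2^a+2^a=2^{a+1}$, while the contribution to $w(\mathcal{G}')$ is $2^{a-2}+2^a+2^a=2^{a+1}+2^{a-2}$. Since all remaining edge contributions cancel in the difference,
\[
w(\mathcal{G}')-w(\mathcal{G})=2^{a-2}>0,
\]
which is exactly the claim.

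There is essentially no obstacle here; the whole argument is a one-line arithmetic identity. The only point worth flagging is the need to check $a\geq 2$ so that the swap is meaningful (i.e., there really are two copies of $xy$ to remove), and this is automatic from the assumption that $xy$ is the chosen edge in two distinct triangles.
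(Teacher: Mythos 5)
Your proof is correct and takes essentially the same approach as the paper: both compute the change in $w$ directly from the three affected edge multiplicities and arrive at $w(\mathcal{G}')-w(\mathcal{G})=2^{m_{xy}-2}>0$. Your added remark that $m_{xy}\geq 2$ is automatic (since $xy$ is chosen in two distinct triangles) is a harmless bit of extra care not spelled out in the paper.
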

\begin{proof}
We have 
\begin{align*}
w(\mathcal{G}')-w(\mathcal{G})&=2^{m_{xy}-2}+2^{m_{xz}+1}+2^{m_{xw}+1}-2^{m_{xy}}-2^{m_{xz}}-2^{m_{xw}}\\
&=2^{m_{xy}-2}+2^{m_{xy}}+2^{m_{xy}}-2^{m_{xy}}-2^{m_{xy}-1}-2^{m_{xy}-1}\\
&=2^{m_{xy}-2}>0,
\end{align*}
as required.
\end{proof}

\begin{lemma}\label{prop:consecutive}
Let $\mathcal{G}$ be a TCM on $[m]$, and let $t\geq4$ be an integer. Suppose there exist distinct vertices $x$ and $y_3,\cdots,y_{t-1}$ in $\mathcal{G}$ such that 
\begin{itemize}
    \item $m_{xy_i}=m-i$ for all $3\leq i\leq t-1$, 
    \item $y_3,\cdots,y_{t-1}$ all belong to distinct closed sets $Y_3,\cdots,Y_{t-1}$ of size at least $2$,
    \item $x$ is not in a closed set of size 2, and $x\not\in Y_3\cup\cdots\cup Y_{t-1}$.
\end{itemize}
Then, $|Y_i|=2$ for every $3\leq i\leq t-1$. Furthermore, if either
\begin{enumerate}
    \item there exists $y_t\in Y_i$ for some $3\leq i\leq t-1$ such that $m_{xy_t}=m-t$, or
    \item there exist distinct $y_t,y_t'$ such that $m_{xy_t}=m_{xy'_t}=m-t$.
\end{enumerate}
Then, there is a series of swaps of edge choices that strictly increases $w(\mathcal{G})$.
\end{lemma}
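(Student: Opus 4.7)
The first statement, that $|Y_i|=2$ for every $3\le i\le t-1$, admits a clean double-counting proof that does not require extremality of $\mathcal{G}$. For each $i$, let $T_i=\{v\ne x,y_i : xy_i\text{ is not chosen in triangle }xy_iv\}$, so that $|T_i|=(m-2)-m_{xy_i}=i-2$. Closedness of $Y_i$ forces $Y_i\setminus\{y_i\}\subseteq T_i$, which gives the upper bound $|A_i|\le i-|Y_i|-1$ for the restriction $A_i:=T_i\cap\{y_j:3\le j\le t-1,\ j\ne i\}$. On the other hand, since each triangle picks exactly one edge, the triangle $xy_iy_j$ forces at least one of $xy_i, xy_j$ to be unchosen, so $y_j\in A_i$ or $y_i\in A_j$ for every pair $i\ne j$. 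Summing the pairwise constraints gives $\sum_i|A_i|\ge\binom{t-3}{2}$, which together with the upper bound forces $\sum_i|Y_i|\le 2(t-3)$. Combined with $|Y_i|\ge 2$, equality pins down $|Y_i|=2$ for every $i$.

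Tightness in the counting above moreover forces the structure of $\mathcal{G}$ around $x$: $|A_i|=i-3$, and for every pair $i<j$ in $\{3,\ldots,t-1\}$ the edge $xy_i$ (not $xy_j$) is chosen in triangle $xy_iy_j$. Equivalently, $T_i=\{z_i\}\cup\{y_j:3\le j<i\}$, where $Y_i=\{y_i,z_i\}$, and for every other vertex $v$ the edge $xy_i$ is chosen in triangle $xy_iv$. I would carry this rigid structural description into the second part.

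For the furthermore statement, the main tool is Lemma~\ref{prop:swap}. In case (2), take $y=y_{t-1}$ (with $m_{xy_{t-1}}=m-t+1$) together with the two distinct vertices $y_t,y_t'$ of multiplicity $m-t$. Provided neither $y_t$ nor $y_t'$ equals $z_{t-1}$, the structural description guarantees $xy_{t-1}$ is chosen in both triangles $xy_{t-1}y_t$ and $xy_{t-1}y_t'$, so Lemma~\ref{prop:swap} applies directly and strictly increases $w(\mathcal{G})$. If one of them equals $z_{t-1}$, I would instead use a different reference $y_j$ for some $j<t-1$ (so that $xy_j$ is chosen in both relevant triangles by the structure), or perform a preliminary neutral swap via Lemma~\ref{lemma:unique-choice} inside the triangle $xy_{t-1}z_{t-1}$ before invoking Lemma~\ref{prop:swap}.

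Case (1) is the main obstacle: only a single vertex $z_i$ of multiplicity $m-t$ is guaranteed. The plan is to either exhibit directly a pair of triangles on which Lemma~\ref{prop:swap} applies with $z_i$ as one endpoint (combined with a vertex forced to have multiplicity $m-t$ by the structure), or to perform a two-stage improvement: first a neutral (or beneficial) swap via Lemma~\ref{lemma:unique-choice} that creates a second vertex of multiplicity $m-t$, reducing to case (2). The sub-case $i=t-1$ is particularly delicate, since $z_{t-1}$ lies in the ``forbidden'' set $T_{t-1}$, so $xy_{t-1}$ is \emph{not} chosen in triangle $xy_{t-1}z_{t-1}$, and one must switch to a reference vertex $y_j$ with $j<t-1$ and carefully track which edges move under the resulting sequence of swaps. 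I expect this edge-case bookkeeping to be the main technical difficulty of the proof.
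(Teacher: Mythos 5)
Your proof of the first assertion (that $|Y_i|=2$ for all $i$) is correct and takes a genuinely different route from the paper's. The paper proceeds by induction on $i$, simultaneously pinning down $|Y_i|$ and the exact set of triangles at $x$ in which $xy_i$ is chosen; you instead obtain the size bound from a single global double-count over pairs $\{i,j\}$ using $\sum_i|A_i|\geq\binom{t-3}{2}$ against $\sum_i|A_i|\leq\sum_i(i-|Y_i|-1)$, and then extract the same structural description from tightness. Both arguments implicitly use that the sets $Y_i$ are pairwise disjoint (automatic in the application, where they are maximal), and your approach is arguably the more transparent of the two.

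The ``furthermore'' half, however, has a genuine gap. In case (2) your argument coincides with the paper's when $y_t,y_t'\notin Y_{t-1}$, but your proposed fallback when one of them equals $z_{t-1}$ does not work: swapping in triangle $xy_{t-1}z_{t-1}$ from $y_{t-1}z_{t-1}$ to $xy_{t-1}$ via Lemma \ref{lemma:unique-choice} has $m_{xy_{t-1}}=m-(t-1)<m-3=m_{y_{t-1}z_{t-1}}-1$ for $t>4$, so it strictly \emph{decreases} $w(\mathcal{G})$; and replacing the reference vertex by $y_j$ with $j<t-1$ gives a multiplicity gap $m_{xy_j}-m_{xy_t}=t-j\geq 2$, which is too large for Lemma \ref{prop:swap} to apply. (In the paper this sub-situation is covered because $y_t=z_{t-1}$ puts you in case (1).) More importantly, for case (1) you offer only a plan and explicitly say you have not completed it. The paper's case (1) argument hinges on an idea your outline does not reach: it finds a vertex $z\notin\{x,y_3,\ldots,y_t\}$ for which $xy_t$ is not chosen in $xy_tz$, observes the chosen edge $e$ there has multiplicity $m-k$ with $3\leq k<t$, performs a chain of exactly-neutral swaps in triangles $xy_ty_{t-1},\ldots,xy_ty_k$ to ``pump'' $m_{xy_t}$ up from $m-t$ to $m-k$, and only then executes a final strictly-improving swap in $xy_tz$. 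Without this pumping chain, the case you identify as the main technical obstacle remains unresolved.
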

\begin{proof}
First, we use induction to show that for each $3\leq i\leq t-1$, $|Y_i|=2$, and the edge $xy_i$ is chosen in the triangle $xy_iy$ if and only if $y\not\in\{x,y_3,\cdots,y_{i-1}\}\cup Y_i$. For $i=3$, since $x\not\in Y_3$ and $Y_3$ is closed, the edge $xy_3$ is not chosen in triangle $xy_3y$ for any other vertex $y\in Y_3$. Since $m_{xy_3}=m-3$, we must have $|Y_3|=2$ and that $xy_3$ is chosen in triangle $xy_3y$ for all $y\not\in\{x\}\cup Y_3$. Suppose now $i\geq 4$ and this is true for all smaller $i \geq 3$. Then by the induction hypothesis, the edge $xy_i$ is not chosen in triangles $xy_iy_j$ for all $3\le j<i$ and also not in triangles $xy_iy$ for all $y\in Y_i$ not equal to $y_i$. So $m-i=m_{xy_i}\leq m-2-(i-3)-(|Y_i|-1)$, implying that $|Y_i|=2$ and that edge $xy_i$ is chosen in triangle $xy_iy$ for all $y\not\in\{x,y_3,\cdots,y_{i-1}\}\cup Y_i$, as required.

In the first case, the assumption implies, in particular, that $Y_i=\{y_i,y_t\}$. From above, $xy_t$ is not chosen in triangle $xy_ty_j$ for all $3\leq j\leq t-1$. But $m_{xy_t}=m-t$, so there exists a vertex $z\not\in\{x,y_3,\cdots, y_t\}$ such that the edge $xy_t$ is not chosen in triangle
$xy_tz$. Let $e$ be the edge that is chosen in $xy_tz$, so $e=xz$ or $y_tz$. If $m_e\leq m_{xy_t}$, then swapping the choice from $e$ to $xy_t$ strictly increases $w(\mathcal{G})$ by Lemma \ref{lemma:unique-choice}, as required, so we can assume $m_e>m_{xy_t}$. Say $m_e=m-k$ with $t>k$. Since $x$ is not in a closed set of size 2, we have $m_{xz}\leq m-3$. Since $Y_i$ is a closed set of size 2, $m_{y_iy_t}=m-2$, which means edge $y_iy_t$ is chosen in triangle $y_iy_tz$. Hence, $m_{y_tz}\leq m-3$ and so $k\geq 3$. Apply the first part of Lemma \ref{lemma:unique-choice} in triangles $xy_ty_{t-1},xy_ty_{t-2},\cdots, xy_ty_k$ successively in this order, swapping the choice from $xy_j$ to $xy_t$ for each $k\leq j\leq t-1$, which changes $m_{xy_t}$ to $m-k$ while maintaining the value of $w(\mathcal{G})$. Now we can swap the choice in triangle $xy_tz$ from $e$ to $xy_t$, which by Lemma \ref{lemma:unique-choice} strictly increases $w(\mathcal{G})$, as required.

In the second case, note that edge $xy_{t-1}$ is chosen in both triangles $xy_{t-1}y_t$ and $xy_{t-1}y_t'$ and $m_{xy_{t-1}}=m-t+1=m_{xy_t}+1=m_{xy_t'}+1$. Hence, we may apply the two swaps given by Lemma \ref{prop:swap} to strictly increase $w(\mathcal{G})$.
\end{proof}

The next lemma, roughly speaking, states that a vertex cannot have many edges with large multiplicities incident with it.
\begin{lemma}\label{lemma:weight-degree}
Let $\mathcal{G}$ be a TCM on $[m]$ and let $x$ be a vertex in $\mathcal{G}$. For every $j\geq 0$, let $d_j(x)$ denote the number of edges of multiplicity $j$ incident with $x$. Then for every $0\leq t\leq m-1$ we have 
\begin{equation*}
\sum_{j\geq t}d_j(x)\le m-1-t
\end{equation*}
\end{lemma}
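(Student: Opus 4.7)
The plan is to bound the set $S=\{y\in[m]\setminus\{x\}:m_{xy}\ge t\}$, for which $\sum_{j\ge t}d_j(x)=|S|$, via a double counting argument that estimates $\sum_{y\in S}m_{xy}$ in two ways. The lower bound is immediate from the definition of $S$: $\sum_{y\in S}m_{xy}\ge t|S|$, since each summand is at least $t$.

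For the upper bound I interpret $\sum_{y\in S}m_{xy}$ as counting ordered pairs $(y,z)$ with $y\in S$, $z\in[m]\setminus\{x,y\}$, and $xy$ chosen in triangle $xyz$, and I split based on whether $z\in S$. If $y,z\in S$, the triangle $xyz$ selects at most one of the three edges $xy,xz,yz$, so at most one of the ordered pairs $(y,z),(z,y)$ contributes; thus the intra-$S$ triangles contribute at most $\binom{|S|}{2}$ in total. If $z\notin S\cup\{x\}$, then for each such $z$ the contribution is at most $|S|$, giving at most $|S|(m-1-|S|)$ in total. Combining gives $t|S|\le\binom{|S|}{2}+|S|(m-1-|S|)$.

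Simplifying this inequality yields $|S|\le 2(m-1-t)-1$, which is weaker than the target $m-1-t$ by roughly a factor of two, so the plan's decisive final step is to tighten the intra-$S$ estimate. The natural route is to invoke Lemma \ref{lemma:unique-choice}: whenever $y,z\in S$ satisfy $m_{yz}>m_{xy},m_{xz}$, the edge $yz$ must be the one chosen in triangle $xyz$, so that triangle contributes $0$ rather than $1$ to the intra-$S$ count. A cleaner alternative is to prove the pointwise statement $a_k\le m-1-k$ for the sorted multiplicities $a_1\ge a_2\ge\cdots\ge a_{m-1}$ at $x$ by induction on $k$, carrying out the same double count but restricted to the top $k$ neighbors of $x$ and using Lemma \ref{lemma:unique-choice} to exclude configurations incompatible with the induced ordering.

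The main obstacle is exactly this tightening. The naive double count is essentially sharp at the purely combinatorial level (it is saturated by near-regular local tournaments with many ties at $x$), so closing the factor-of-two gap seems to require the structural/extremal input provided by Lemma \ref{lemma:unique-choice}: we need to force a substantial fraction of intra-$S$ triangles to select the edge $yz$ rather than an edge incident to $x$, and that constraint does not follow from the TCM axiom alone.
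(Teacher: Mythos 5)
Your double-counting only gives $|S|\le 2(m-1-t)-1$, and, as you rightly sense, the remaining gap cannot be closed by TCM-axiom counting alone: without an extremality assumption (which the paper's proof tacitly uses, though it is not written into the lemma statement) the conclusion is simply false. For example, with $m=6$ one can construct a TCM in which $m_{12}=m_{13}=m_{14}=3$, so vertex $1$ has three incident edges of multiplicity $\ge 3$ although $m-1-3=2$; of course such a TCM is not extremal, since by Lemma \ref{lemma:unique-choice} swapping the choice in triangle $\{1,2,3\}$ would strictly increase $w$. So the extremality input of Lemma \ref{lemma:unique-choice} is genuinely needed, and your proposal stops exactly at the point where that input has to be made precise.

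The paper's argument is shorter and takes a different route from a global double-count: fix the edge $xy$ with $m_{xy}\ge t$ of \emph{minimal} multiplicity. The $m_{xy}$ vertices $z$ for which $xy$ is chosen in triangle $xyz$, and the $\sum_{j\ge t}d_j(x)-1$ vertices $z\ne y$ with $m_{xz}\ge m_{xy}$ (which, by minimality, is exactly $\{z:m_{xz}\ge t\}\setminus\{y\}$), are disjoint subsets of $[m]\setminus\{x,y\}$, because Lemma \ref{lemma:unique-choice} applied to an extremal $\mathcal{G}$ shows that $m_{xz}\ge m_{xy}$ forces $xy$ not to be chosen in $xyz$. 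Hence $m_{xy}+\sum_{j\ge t}d_j(x)-1\le m-2$, which gives the bound since $m_{xy}\ge t$. This is essentially your proposed pointwise $a_k\le m-1-k$ route carried out directly rather than by induction; to complete your proposal you would need to replace the double-count with this argument.
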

\begin{proof}
If there exists no edge incident with $x$ with multiplicity at least $t$ in $\mathcal{G}$, then we are done. Otherwise, let $xy$ be an edge minimising $m_{xy}$, subjected to the condition that $m_{xy}\geq t$. Then there exists $m_{xy}\geq t$ vertices $z\not=x,y$ in $\mathcal{G}$ such that in triangle $xyz$ the edge $xy$ is chosen. On the other hand, by Lemma~\ref{lemma:unique-choice}, for any $z\not=x,y$ satisfying $m_{xz}\geq m_{xy}$, the edge $xy$ is not chosen in triangle $xyz$. There are $\sum_{j\geq m_{xy}}d_j(x)-1=\sum_{j\geq t}d_j(x)-1$ such $z$. Hence, $\sum_{j\geq t}d_j(x)-1+m_{xy}\leq m-2$, which gives $\sum_{j\geq t}d_j(x)\leq m-1-m_{xy}\leq m-1-t$.
\end{proof}

We are now ready to prove the main result of this subsection. 
\begin{prop}\label{prop:sizeatleast2}
There exists a triangular choice multigraph $\mathcal{G}$ on $[m]$ with $w(\mathcal{G})=H(m)$, such that every vertex in $\mathcal{G}$ is contained in a closed set of size 2 or 3.
\end{prop}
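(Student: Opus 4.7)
My plan is to proceed by contradiction. Among all extremal triangular choice multigraphs on $[m]$, I would pick one $\mathcal{G}$ that maximises the number of vertices contained in a closed set of size $2$ or $3$, and suppose some vertex $x$ is still not in any such set; the goal is to derive a contradiction either with the extremality of $\mathcal{G}$ or with the maximality of this count.

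First I translate the "good vertex" condition into edge multiplicities at $x$. A vertex $x$ is in a closed set of size $2$ iff some edge $xy$ has multiplicity $m-2$, and $x$ is in a closed set of size $3$ but not in any closed set of size $2$ iff there exist distinct $y,z\neq x$ with $m_{yz}=m-2$ and $m_{xy}=m_{xz}=m-3$. So $x$ being bad amounts to: (i) $m_{xy}\le m-3$ for every $y\ne x$, and (ii) there is no pair of distinct $y,z\ne x$ with $m_{xy}=m_{xz}=m-3$ and $m_{yz}=m-2$.

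Next I try to assemble the setup of Lemma \ref{prop:consecutive} with $x$ as the central vertex. Using the degree bound in Lemma \ref{lemma:weight-degree} and the second part of Lemma \ref{lemma:unique-choice}, which pins down edge choices from multiplicity orderings in any extremal TCM, I build a chain of neighbours $y_3,y_4,\dots,y_{t-1}$ of $x$ with consecutive multiplicities $m_{xy_i}=m-i$. Each $y_i$ must sit in a closed set $Y_i$ of size at least $2$, the $Y_i$ must be distinct, and $x$ must lie outside every $Y_i$. The existence of such $Y_i$ is guaranteed by the maximality assumption: every vertex other than $x$ is in some closed set of size $2$ or $3$, and by Lemma \ref{disjointclosedset} the containing maximal closed sets partition $[m]\setminus\{x\}$.

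Once the chain is constructed, two outcomes are possible. If at some step $t$ either a vertex of multiplicity $m-t$ with $x$ lies inside some previously selected $Y_i$, or two distinct vertices of multiplicity $m-t$ with $x$ exist, then condition 1 or condition 2 of Lemma \ref{prop:consecutive} applies, yielding a sequence of swaps that strictly increases $w(\mathcal{G})$ and contradicts extremality. If neither trigger is reached, the chain's structural rigidity forces enough freedom that I can perform a sequence of non-strict edge swaps using the equality case of Lemma \ref{lemma:unique-choice} (and, when two boosted edges need to be traded at once, Lemma \ref{prop:swap}) to preserve $w(\mathcal{G})=H(m)$ while raising the multiplicity of some edge at $x$ to $m-2$. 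This places $x$ in a new closed pair and contradicts the maximality of the chosen $\mathcal{G}$.

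The main obstacle will be the bookkeeping in the chain construction and the subsequent case analysis: ensuring the $y_i$ are distinct, their host closed sets $Y_i$ pairwise disjoint and disjoint from $\{x\}$, and that the bad conditions (i) and (ii) remain compatible with extending the chain until one of Lemma \ref{prop:consecutive}'s conditions must trigger. I expect this to rely on an interplay between the partition structure of maximal closed sets (Lemma \ref{disjointclosedset}), the degree inequality $\sum_{j\ge t}d_j(x)\le m-1-t$ from Lemma \ref{lemma:weight-degree}, and the rigidity of chosen edges in extremal TCMs (Lemma \ref{lemma:unique-choice}).
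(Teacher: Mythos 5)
Your overall setup is essentially the same as the paper's: pick an extremal $\mathcal{G}$ that maximises the number of vertices lying in a closed set of size $2$ or $3$ (equivalently, minimises the number $|S|$ of ``bad'' vertices), and try to derive a contradiction from the existence of a bad vertex $x$. Your translation of the bad-vertex condition into edge multiplicities is also correct.

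However, there is a genuine gap. Your argument hinges on the claim that ``every vertex other than $x$ is in some closed set of size $2$ or $3$,'' which you need in order to supply the host closed sets $Y_i$ to Lemma \ref{prop:consecutive}. This claim only holds if $x$ is the \emph{unique} bad vertex, i.e.\ $|S|=1$. Maximality of the good-vertex count does not force $|S|\le 1$ a priori. If $|S|\ge 2$, then the chain vertices $y_i$ you construct may themselves be bad, and Lemma \ref{prop:consecutive} is inapplicable because its hypotheses require each $y_i$ to lie in a closed set of size at least $2$. The paper handles this by a separate case (its Case~1) that analyses an edge $xx'$ between two bad vertices: it first shows that any $z$ for which $xx'$ is not chosen in $xx'z$ must be a good vertex, and only then invokes the closed-set machinery. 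Your proposal skips this reduction entirely, so the chain construction is not justified in the general situation.

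A secondary concern is the closing step. You propose to finish, when no trigger of Lemma \ref{prop:consecutive} fires, by performing value-preserving swaps that push some $m_{xy}$ up to $m-2$, creating a new closed pair and contradicting maximality of the good-vertex count. This is a reasonable strategy in spirit, but you give no mechanism for why such swaps always exist. The paper never relies on this: in every subcase (including its Case~2, where $x$ is the unique bad vertex) it eventually produces a \emph{strict} increase of $w(\mathcal{G})$, contradicting extremality directly; the only value-preserving swaps it performs are local preliminary adjustments (e.g.\ reassigning a choice within a size-$3$ closed set) to set up a subsequent strict-increase swap. Without a concrete argument that the non-strict swaps you envision are always available and terminate in a closed pair at $x$, this part of your plan is not yet a proof.
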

\begin{proof}
Let $\mathcal{G}$ be a TCM on $[m]$ satisfying $w(\mathcal{G})=H(m)$ and minimising the size of the set $S\subset[m]$ of vertices that are not contained in a closed set of size 2 or 3. It suffices to show that $|S|=0$. For a contradiction, suppose $|S|\geq 1$.

\noindent\textbf{Case 1.} $|S|\geq2$. Let $xx'$ be an edge between vertices in $S$ maximising $m_{xx'}$, and suppose $m_{xx'}=m-t$. Using Lemma \ref{lemma:unique-choice} and the maximality of $m_{xx'}$ and $w(\mathcal{G})$, we have that if $z$ is another vertex such that edge $xx'$ is not chosen on triangle $xx'z$, then $z\not\in S$. We may also assume that every edge adjacent to either $x$ or $x'$ has multiplicity at most $m-3$, as if, say, $m_{xz}=m-2$ for some $z\not=x$, then $\{x,z\}$ is a closed set of size 2, contradicting $x\in S$. In particular, $t\geq3$. If $t=3$, let $z$ be the unique vertex such that edge $xx'$ is not chosen in triangle $xx'z$. Say $xz$ is chosen in triangle $xx'z$, then by Lemma \ref{lemma:unique-choice}, $m_{xz}>m_{xx'}$, so $m_{xz}=m-2$. Thus, $\{x,z\}$ is a closed set of size 2, contradicting $x\in S$. Hence, $t\geq4$. We have two further subcases.

\textbf{Case 1.1.} For at least one of $x$ and $x'$, say $x$, there exist two vertices $z,z'$ such that $m_{xz}=m_{xz'}=m-3$. By Lemma \ref{lemma:unique-choice}, $m_{xz}=m_{xz'}$ implies that $zz'$ is the edge chosen in triangle $xzz'$ and $m_{zz'}=m-2$. It follows that $\{x,z,z'\}$ form a closed set of size 3, contradicting $x\in S$.

\textbf{Case 1.2.} For each of $x$ and $x'$, there is at most one edge of multiplicity $m-3$ incident with it. Suppose there are $r-2$ vertices $y$ such that edge $xy$ is chosen in triangle $xx'y$. Let these $r-2$ vertices be $y_3,\cdots,y_r$ and assume that $m-3\geq m_{xy_3}\geq\cdots\geq m_{xy_r}$. We claim that $$\sum_{j=3}^r2^{m_{xy_j}}\leq2^{m-2}-2^{m-r}.$$
If $r=3$ this follows from $m_{xy_3}\leq m-3$. If $r=4$, this follows from $m_{xy_4}\leq m-4$, as otherwise there are two edges of multiplicities $m-3$ incident with $x$. For the $r\geq 5$ case, first suppose that $m_{xy_j}\geq m-j$ for all $3\leq j\leq r$. Then using $w(\mathcal{G})=H(m)$ and Lemma \ref{prop:consecutive}, we see inductively that $m_{xy_j}=m-j$ for all $3\leq j\leq r$ and each $y_j$ belongs to a different closed set of size 2. Thus, $\sum_{j=3}^r2^{m_{xy_j}}=\sum_{j=3}^r2^{m-j}=2^{m-2}-2^{m-r}$. If this is not true, let $3\leq i\leq r$ be the smallest index such that $m_{xy_i}<m-i$. Again by Lemma \ref{prop:consecutive}, we have $m_{xy_j}=m-j$ for all $3\leq j\leq i-1$. By Lemma \ref{lemma:weight-degree}, for all $3\leq j\leq r$, there are at most $j-1$ edges incident with $x$ that has multiplicity at least $m-j$. Hence, the sequence $(d_j)_{j=3}^r$ given by $d_j=m-j$ for all $3\leq j\leq i-1$, $d_j=m-i-1$ for all $i\leq j\leq i+2$ and $d_j=m-j+1$ for all $i+3\leq j\leq r$ dominates the sequence $(m_{xy_j})_{j=3}^r$, in the sense that $d_j\geq m_{xy_j}$ for all $3\leq j\leq r$. Thus, $\sum_{j=3}^r2^{m_{xy_j}}\leq\sum_{j=3}^r2^{d_j}=2^{m-2}-2^{m-r+1}$, as required.

Similarly, suppose there are $s-2$ vertices $y'$ such that edge $x'y'$ is chosen in triangle $xx'y'$. Let them be $y'_3,\cdots,y'_s$ and assume that $m-3\geq m_{x'y'_3}\geq\cdots\geq m_{x'y'_s}$, then $\sum_{j=3}^s2^{m_{x'y'_j}}\leq2^{m-2}-2^{m-s}$. Note that as $y_3,\cdots,y_r,y'_3,\cdots,y'_s$ are all the vertices $y$ such that edge $xx'$ is not chosen in triangle $xx'y$, we have $r-2+s-2=t-2$. Consider the TCM $\mathcal{G}'$ obtained from $\mathcal{G}$ by choosing edge $xx'$ instead of $xy_j$ in triangles $xx'y_j$ for all $3\leq j\leq r$ and choosing edge $xx'$ instead of $x'y'_j$ in triangles $xx'y'_j$ for all $3\leq j\leq s$. Then 
\begin{align*}
w(\mathcal{G}')-w(\mathcal{G})&=2^{m-2}+\sum_{j=3}^r2^{m_{xy_j}-1}+\sum_{j=3}^s2^{m_{x'y'_j}-1}-2^{m-t}-\sum_{j=3}^r2^{m_{xy_j}}-\sum_{j=3}^s2^{m_{x'y'_j}}\\
&=2^{m-2}-2^{m-t}-\sum_{j=3}^r2^{m_{xy_j}-1}-\sum_{j=3}^s2^{m_{x'y'_j}-1}\\
&\geq2^{m-2}-2^{m-t}-(2^{m-3}-2^{m-r-1})-(2^{m-3}-2^{m-s-1})\\
&=2^{m-r-1}+2^{m-s-1}-2^{m-r-s+2}>0,
\end{align*}
contradicting $w(\mathcal{G})=H(m)$.
 
\noindent\textbf{Case 2.} $|S|=1$. Let $x$ be the unique vertex in $S$ and let $z$ be a vertex maximising $m_{xz}$. Since $|S|=1$, $z$ is in a closed set of size 2 or 3. Let $m_{xz}=m-t$. If $t=2$, we have a contradiction as $\{x,z\}$ is a size 2 closed set. So $t\geq 3$. 

\textbf{Case 2.1.} $z$ is in a closed set $C$ of size 3. Let $y_3,y_4$ be the other two vertices in $C$. Two of edges $zy_3,zy_4,y_3y_4$ have multiplicity $m-3$ while the other has multiplicity $m-2$. If $m_{y_3y_4}=m-2$, do nothing. If $m_{y_3y_4}=m-3$, then $y_3y_4$ is not chosen in triangle $zy_3y_4$. Change the choice on $zy_3y_4$ so that $y_3y_4$ is chosen. Then $m_{y_3y_4}=m-2$ and $w(\mathcal{G})$ is unchanged. In both cases, we now have $m_{zy_3}=m_{zy_4}=m-3$. Also note that $t\geq 4$ as edge $xz$ is not chosen in the triangles $xzy_3$, $xzy_4$. There are $t-2$ vertices $y$ such that edge $xz$ is not chosen in triangle $xzy$. Using $w(\mathcal{G})=H(m)$ and Lemma \ref{lemma:unique-choice}, we see that for every such $y$, edge $zy$ is chosen in triangle $xzy$, as otherwise $m_{xy}>m_{xz}$, contradicting the maximality of $m_{xz}$. Let these $t-2$ vertices be $y_3,\cdots,y_t$ and assume that $m_{zy_3}\geq\cdots\geq m_{zy_t}$. By Lemma~\ref{lemma:weight-degree}, $(m_{zy_j})_{j=3}^t$ is dominated by the sequence $(d_j)_{j=3}^t$, where $d_3=m-3$ and $d_j=m-j+1$ for all $4\leq j\leq t$. Thus, $\sum_{j=3}^t2^{m_{zy_j}}\leq\sum_{j=3}^t2^{d_j}=2^{m-2}+2^{m-3}-2^{m-t+1}$. Consider the TCM $\mathcal{G}'$ obtained from $\mathcal{G}$ by picking edge $xz$ instead of $zy_j$ in triangle $xzy_j$ for every $3\leq j\leq t$. Then
\begin{align*}
w(\mathcal{G}')-w(\mathcal{G})&=2^{m-2}+\sum_{j=3}^t2^{m_{zy_j}-1}-2^{m-t}-\sum_{j=3}^t2^{m_{zy_j}}\\
&=2^{m-2}-2^{m-t}-\sum_{j=3}^t2^{m_{zy_j}-1}\\
&\geq2^{m-2}-2^{m-t}-2^{m-3}-2^{m-4}+2^{m-t}\\
&=2^{m-4}>0,     
\end{align*}
contradicting $w(\mathcal{G})=H(m)$.   

\textbf{Case 2.2.} $z$ is not in a closed set of size 3, so it is in a closed set $C$ of size 2. Let $z'$ be the other vertex in $C$. Like in Case 2.1, if $y$ is a vertex such that edge $xz$ is not chosen in triangle $xzy$, then $zy$ must be the edge chosen. Let the $t-2$ vertices $y$ such that edge $zy$ is chosen in triangle $xzy$ be $y_3,\cdots,y_t$, and assume that $m_{zy_3}\geq\cdots\geq m_{zy_t}$. Then $z'=y_3$ as $m_{zz'}=m-2$. We claim that $t=3$. Indeed, if $t\ge 4$, let $C'$ be a closed set of size 2 or 3 containing $y_4$. $C'$ must be disjoint from $C$ as otherwise Lemma \ref{disjointclosedset} implies that $C'=C\cup\{y_4\}=\{z,z',y_4\}$, contradicting that $z$ is not contained in a closed set of size 3. Thus, $m_{zy_4}\leq m-4$ as edge $zy_4$ is not chosen in triangle $zy_4z'$ and $zy_4y_4'$, where $y_4'$ is any vertex in $C'$ other than $y_4$. By Lemma~\ref{lemma:weight-degree}, $(m_{zy_j})_{j=3}^t$ is dominated by the sequence $(d_j)_{j=3}^t$, where $d_3=m-2$, $d_4=m-4$ and $d_j=m-j+1$ for all $5\leq j\leq t$. Thus, $\sum_{j=4}^t2^{m_{zy_j}}\leq\sum_{j=4}^t2^{d_j}=2^{m-3}+2^{m-4}-2^{m-t+1}$. Consider the TCM $\mathcal{G}'$ obtained from $\mathcal{G}$ by picking edge $xz$ instead of $zy_j$ in triangles $xzy_j$ for every $4\leq j\leq t$. Then 
\begin{align*}
w(\mathcal{G}')-w(\mathcal{G})&=2^{m-3}+\sum_{j=4}^t2^{m_{zy_j}-1}-2^{m-t}-\sum_{j=4}^t2^{m_{zy_j}}\\
&=2^{m-3}-2^{m-t}-\sum_{j=4}^t2^{m_{zy_j}-1}\\
&\geq2^{m-3}-2^{m-t}-2^{m-4}-2^{m-5}+2^{m-t}\\
&=2^{m-5}>0,
\end{align*}
contradicting $w(\mathcal{G})=H(m)$.

Hence, $t=3$ and $m_{xz}=m-3$. If $m_{xz'}=m-3$, then $\{x,z,z'\}$ is a closed set of size 3, a contradiction. Thus, $m_{xz'}=m-t'$ for some $t'\geq 4$. Let $w_3,\cdots,w_r$ be all the vertices such that edge $xw_j$ is chosen in the triangle $xz'w_j$ for all $3\leq j\leq r$, and assume that $m_{xw_3}\geq\cdots\geq m_{xw_r}$. In particular, $w_3=z$ and $m-3=m_{xw_3}$. We claim that $$\sum_{j=4}^r2^{m_{xw_j}}\leq2^{m-3}-2^{m-r}.$$ This is trivial if $r=3$. If $r\geq 4$, since $m_{xz}=m-3$ and $m_{zz'}=m-2$, the edge $xz$ is chosen in triangle $xzw$ for all $w\not=z'$. So by Lemma \ref{lemma:unique-choice}, $m_{xw_4}\leq m_{xz}-1=m-4$, which in particular proves the case when $r=4$. Suppose now $r\geq 5$. If $m_{xw_4}=m_{xw_5}=m-4$, then by Lemma \ref{prop:swap}, there is a swap of choice that strictly increase $w(\mathcal{G})$, a contradiction. Thus, $m_{xw_5}\leq m-5$, and this allows us to use the same argument as in Case 1.2 to show that $\sum_{j=4}^r2^{m_{xw_j}}\leq2^{m-3}-2^{m-r}$. 

Now let $w'_3,\cdots,w'_s$ be all the vertices such that edge $z'w'_j$ is chosen in the triangle $xz'w'_j$ for all $3\leq j\leq s$, and assume that $m_{z'w'_3}\geq\cdots\geq m_{z'w'_s}$. In particular, $w'_3=z$ and $m_{z'w'_3}=m-2$. We claim that $$\sum_{j=4}^s2^{m_{z'w'_j}}\leq2^{m-3}-2^{m-s}.$$ 
This is trivial if $s=3$. If $s\geq 4$, $m_{z'w'_4}\not=m-2$ as $m_{z'z}=m-2$. If $m_{z'w'_4}=m-3$, then as $m_{z'z}=m-2$ and $m_{xz}=m-3$, Lemma \ref{prop:swap} implies there is an edge swap that strictly increases $w(\mathcal{G})$, a contradiction. Hence $m_{z'w'_4}\leq m-4$, and the $s=4$ case is proved. Assume now $s\geq 5$. If $m_{z'w'_4}=m_{z'w'_5}=m-4$, then we change the edge choice on triangle $xzz'$ from $zz'$ to $xz$. This maintains the value of $w(\mathcal{G})$, but now $m_{zz'}=m-3$, so we can apply Lemma \ref{prop:swap} to find swap of choice that strictly increase $w(\mathcal{G})$, a contradiction. Thus, $m_{z'w'_5}\leq m-5$ and again this allows us to apply the same argument as in Case 1.2 to show that $\sum_{j=4}^s2^{m_{z'w'_j}}\leq2^{m-3}-2^{m-s}$. Note that $r-3+s-3=t'-3$. Consider the TCM $\mathcal{G}'$ obtained from $\mathcal{G}$ by choosing edge $xz'$ instead of $xw_j$ in triangles $xz'w_j$ for all $4\leq j\leq r$ and choosing edge $xz'$ instead of $z'w'_j$ in triangles $xz'w'_j$ for all $4\leq j\leq s$. We have
 \begin{align*}
 w(\mathcal{G}')-w(\mathcal{G})&=2^{m-3}+\sum_{j=4}^r2^{m_{xw_j}-1}+\sum_{j=4}^s2^{m_{z'w'_j}-1}-2^{m-t'}-\sum_{j=4}^r2^{m_{xw_j}}-\sum_{j=4}^s2^{m_{z'w'_j}}\\
 &=2^{m-3}-2^{m-t'}-\sum_{j=4}^r2^{m_{xw_j}-1}-\sum_{j=4}^s2^{m_{z'w'_j}-1}\\
 &\geq2^{m-3}-2^{m-t'}-(2^{m-4}-2^{m-r-1})-(2^{m-4}-2^{m-s-1})\\
 &=2^{m-r-1}+2^{m-s-1}-2^{m-r-s+3}>0,
 \end{align*}
 contradicting $w(\mathcal{G})=H(m)$. 
\end{proof}

\subsection{Proofs of Theorem \ref{thm:upperbds} and Theorem \ref{thm:generalupperbds}}\label{upperproof}
In this subsection, we prove Theorem \ref{thm:upperbds} and Theorem \ref{thm:generalupperbds}. We begin by proving several inequalities that will be needed in the proof of Theorem \ref{thm:upperbds}, all of which are consequences of the well-known Karamata's Inequality below. 

\begin{lemma}[Karamata's Inequality, \cite{JovanKaramata1932}]\label{karamata} 
Let $f$ be a real-valued convex function defined on some interval $I$. Suppose $\lambda_1\geq\cdots\geq \lambda_n$ and $\mu_1\geq\cdots\geq \mu_n$ are numbers in $I$, such that for every $k\in[n]$, we have
$$\sum_{i=1}^{k}\lambda_i\le\sum_{i=1}^{k}\mu_i,$$
then 
$$\sum_{i=1}^nf(\lambda_i)\le\sum_{i=1}^nf(\mu_i).$$   
\end{lemma}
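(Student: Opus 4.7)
The plan is to combine convexity of $f$ (via subgradients) with an Abel summation-by-parts argument on the prefix sums $S_k = \sum_{i=1}^k(\mu_i - \lambda_i)$, which the hypothesis of the lemma guarantees are all non-negative.

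First, for each $i \in [n]$ I would pick a subgradient $c_i$ of $f$ at $\lambda_i$, concretely the right derivative $c_i = f'_+(\lambda_i)$, which exists everywhere in the interior of $I$ by convexity (and at endpoints one passes to one-sided derivatives or takes a limit). Since $f$ is convex and the sequence $\lambda_1 \geq \lambda_2 \geq \cdots \geq \lambda_n$ is non-increasing, the sequence of right derivatives $c_1 \geq c_2 \geq \cdots \geq c_n$ is also non-increasing. The subgradient inequality then yields
$$f(\mu_i) - f(\lambda_i) \;\geq\; c_i(\mu_i - \lambda_i) \qquad \text{for every } i \in [n].$$

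Summing over $i$ and regrouping by summation by parts gives
$$\sum_{i=1}^n c_i(\mu_i - \lambda_i) \;=\; \sum_{i=1}^{n-1} (c_i - c_{i+1}) S_i \;+\; c_n S_n.$$
By hypothesis, $S_k \geq 0$ for every $k \in [n]$, and by the monotonicity of $(c_i)$ established above, $c_i - c_{i+1} \geq 0$, so each of the first $n-1$ terms on the right-hand side is non-negative. Combining the two displayed inequalities then yields $\sum_{i=1}^n f(\mu_i) \geq \sum_{i=1}^n f(\lambda_i)$, as claimed.

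The main subtlety, and really the only obstacle, is the final term $c_n S_n$. In the classical statement of Karamata's inequality one additionally imposes $\sum_{i=1}^n \lambda_i = \sum_{i=1}^n \mu_i$, i.e.\ $S_n = 0$, which makes the term vanish outright. Under the slightly weaker hypothesis stated in the lemma (only prefix inequalities, no equality of total sums), the term is non-negative provided $c_n \geq 0$; this holds whenever $f$ is non-decreasing, which is the case in the paper's application with $f(x) = 2^x$. A second, very minor technicality is justifying the existence and the correct monotonicity of the $c_i$'s for a general (not necessarily differentiable) convex function, but this is a standard consequence of convexity on $\mathbb{R}$ and is handled by working with one-sided derivatives throughout.
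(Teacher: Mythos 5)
The paper cites Karamata's inequality from \cite{JovanKaramata1932} and supplies no proof, so there is no in-paper argument to compare against; the question is simply whether your proof is sound and whether your remark about the hypotheses is right. Your Abel-summation argument with one-sided subgradients is the standard proof, and it is correct. You have also caught a real issue: the lemma \emph{as stated} is false. With only the prefix inequalities $\sum_{i\le k}\lambda_i\le\sum_{i\le k}\mu_i$ and no equality of total sums, the conclusion can fail for a convex but decreasing $f$; for instance $f(x)=-x$, $\lambda=(0,0)$, $\mu=(1,0)$ satisfies the hypotheses yet $\sum_i f(\lambda_i)=0>-1=\sum_i f(\mu_i)$. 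Your summation-by-parts display cleanly isolates the offending term $c_n S_n$, and your two proposed repairs --- impose $\sum_i\lambda_i=\sum_i\mu_i$ (classical Karamata, $S_n=0$) or require $f$ non-decreasing (so $c_n\ge 0$) --- are exactly the standard ones.

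One thing to tighten: the paper actually invokes Lemma~\ref{karamata} in three places, in the proofs of Lemma~\ref{lem:majorise} (with $f_k(x)=(a_k-a_{k+1})x$), Lemma~\ref{minimisexiai} (with $f(x)=x^2$, index order reversed), and Lemma~\ref{vertexcontribution} (with $f(x)=2^x$), but you verified only the last. The first is non-decreasing linear since $a_k\ge a_{k+1}$, so that one is fine. The second looks dangerous at first because $x^2$ is not globally monotone, but the arguments fed into it are $a_j-x_j$ and $a_j-y_j$, all of which are non-negative, so $x^2$ is non-decreasing on the relevant interval; moreover that application has $\sum_i\lambda_i=\sum_i\mu_i$ (the suffix-sum inequality established there is an equality at $k'=1$), giving $S_n=0$ as a second safety net. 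So your monotonicity fix does suffice for all three uses, but the claim that the paper's application is safe should be checked against all of them, not just the $2^x$ one.
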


\begin{lemma}\label{lem:majorise}
Let $a_1\geq\cdots\geq a_n\geq 0$, $\lambda_1\geq\cdots\geq \lambda_n$ and $\mu_1\geq\cdots\geq \mu_n$. If for every $k\in[n]$, we have
$$\sum_{i=1}^{k}\lambda_i\le\sum_{i=1}^{k}\mu_i,$$
then 
$$\sum_{i=1}^n\lambda_ia_i\le\sum_{i=1}^n\mu_ia_i.$$   
\end{lemma}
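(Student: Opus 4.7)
The statement is a weighted rearrangement/majorization inequality that fits perfectly into the framework of Abel summation (summation by parts), so that is the approach I would take. The key observation is that the hypotheses control the partial sums $L_k = \sum_{i=1}^k \lambda_i$ and $M_k = \sum_{i=1}^k \mu_i$ (with $L_k \leq M_k$), while the monotonicity $a_1 \geq \cdots \geq a_n \geq 0$ means that the consecutive differences $a_i - a_{i+1}$ are nonnegative, and $a_n$ itself is nonnegative.

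My plan is as follows. First, define $L_0 = M_0 = 0$ and $L_k, M_k$ as above, so that $\lambda_i = L_i - L_{i-1}$ and $\mu_i = M_i - M_{i-1}$. Apply Abel summation to rewrite
\begin{equation*}
\sum_{i=1}^n \lambda_i a_i = L_n a_n + \sum_{i=1}^{n-1} L_i (a_i - a_{i+1}),
\end{equation*}
and similarly for $\mu$. Now use the hypotheses: $a_i - a_{i+1} \geq 0$ for $1 \leq i \leq n-1$, $a_n \geq 0$, and $L_i \leq M_i$ for every $i \in [n]$. Multiplying term by term and summing yields
\begin{equation*}
\sum_{i=1}^n \lambda_i a_i = L_n a_n + \sum_{i=1}^{n-1} L_i (a_i - a_{i+1}) \leq M_n a_n + \sum_{i=1}^{n-1} M_i (a_i - a_{i+1}) = \sum_{i=1}^n \mu_i a_i,
\end{equation*}
which is the desired inequality.

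There is no real obstacle here; the statement can also be viewed as an instance of Karamata's inequality applied to the (not quite convex but piecewise-linear) functions $t \mapsto t \cdot a_i$, but the direct Abel summation argument is cleaner and avoids invoking the full strength of Lemma \ref{karamata}. The only point worth flagging is that we do not need the sequences $\lambda_i$ and $\mu_i$ to lie in any specific interval, and we do not need $a_i$ to be strictly positive — nonnegativity of $a_n$ together with monotonicity of the $a_i$ is exactly what makes each coefficient in the Abel expansion nonnegative, which is all we use.
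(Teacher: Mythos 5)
Your proof is correct and is essentially the same argument as the paper's: the paper defines linear functions $f_k(x)=(a_k-a_{k+1})x$, notes they are convex with nonnegative slope, applies Karamata to each and sums over $k$ — which, after interchanging the order of summation, is precisely your Abel-summation decomposition with $L_0=0$ and $a_{n+1}=0$. Your version is slightly more direct since invoking Karamata for a linear function reduces to simply multiplying the hypothesis $L_k\le M_k$ by the nonnegative constant $a_k-a_{k+1}$.
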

\begin{proof}
Set $a_{n+1}=0$. For each $k\in[n]$, let $f_k(x)=(a_k-a_{k+1})x$, and note that $f_i$ is convex. Then by Lemma \ref{karamata}, we have
$$\sum_{i=1}^k(a_k-a_{k+1})\lambda_i=\sum_{i=1}^kf_k(\lambda_i)\leq\sum_{i=1}^kf_k(\mu_i)=\sum_{i=1}^k(a_k-a_{k+1})\mu_i,$$
for each $k\in[n]$. Summing these together, we obtain
$$\sum_{i=1}^na_i\lambda_i=\sum_{k=1}^n\sum_{i=1}^k(a_k-a_{k+1})\lambda_i\leq\sum_{k=1}^n\sum_{i=1}^k(a_k-a_{k+1})\mu_i=\sum_{i=1}^na_i\mu_i,$$
as required.
\end{proof}

\begin{lemma}\label{minimisexiai}
Let $n>n'\geq 1$, $a_k\geq\cdots\geq a_1>0$ and $x_1,\cdots,x_k\geq0$, such that $\sum_{j=1}^ka_j=n$, $\sum_{j=1}^kx_j=n'$ and $a_j\geq x_j$ for all $j\in[k]$. Let $q\in[k]$ be the unique index satisfying $\sum_{j=1}^{q-1}a_j\leq n'<\sum_{j=1}^qa_j$, and let $y_j=a_j$ for all $j\in[q-1]$, $y_q=n'-\sum_{j=1}^{q-1}a_j$, and $y_j=0$ for all $j\in[k]\setminus[q]$. Then
$$\sum_{j=1}^k(a_j-x_j)^2\leq\sum_{j=1}^k(a_j-y_j)^2.$$
\end{lemma}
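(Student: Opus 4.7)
The plan is to apply Karamata's inequality (Lemma~\ref{karamata}) with the convex function $f(t)=t^2$ to the sequences $\lambda_j=a_j-x_j$ and $\mu_j=a_j-y_j$. Both sequences are nonnegative (by the hypothesis $a_j\ge x_j$ for $\lambda$, and because the defining condition on $q$ forces $0\le y_q\le a_q$ for $\mu$), and both sum to $n-n'$. The desired conclusion $\sum_j(a_j-x_j)^2\le\sum_j(a_j-y_j)^2$ will then follow from Karamata's inequality provided I verify that, after sorting both sequences in decreasing order, the partial sums of sorted $\mu$ dominate those of sorted $\lambda$.

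From the definition of $y$ we have $\mu_j=0$ for $j<q$, $\mu_q=a_q-y_q\in[0,a_q]$, and $\mu_j=a_j$ for $j>q$. Since $a_1\le\cdots\le a_k$, the decreasingly-sorted sequence is
$$\mu^*=(a_k,a_{k-1},\ldots,a_{q+1},a_q-y_q,0,\ldots,0),$$
and a short computation using $\sum_ja_j=n$ together with $y_q=n'-\sum_{j<q}a_j$ shows that the partial sum of $\mu^*$ up to index $k-q+1$ already equals the total $n-n'$, after which it remains constant.

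The majorization itself is then quick. For $t\le k-q$ the partial sum $\sum_{i\le t}\mu^*_i$ is exactly the sum of the $t$ largest $a_j$'s; since $\lambda_j\le a_j$ for every $j$, the sum of any $t$ of the $\lambda_j$'s (in particular the $t$ largest) is at most the sum of the $t$ largest $a_j$'s, giving $\sum_{i\le t}\lambda^*_i\le\sum_{i\le t}\mu^*_i$. For $t\ge k-q+1$ the partial sum of $\mu^*$ has already saturated at the total $n-n'$, so it trivially dominates $\sum_{i\le t}\lambda^*_i$. Karamata's inequality applied to $f(t)=t^2$ then finishes the proof.

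I expect the only delicate step to be the small amount of bookkeeping around the index $q$ — verifying the identity $\sum_{j\ge q}a_j-y_q=n-n'$ and splitting the majorization check into the two ranges $t\le k-q$ and $t>k-q$. Once that is in place, the argument is a direct application of Karamata with $f(t)=t^2$.
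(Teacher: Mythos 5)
Your proof is correct and takes essentially the same approach as the paper: both verify the Karamata majorization conditions for $\lambda_j=a_j-x_j$ and $\mu_j=a_j-y_j$ using precisely the two observations that $\lambda_j\geq 0$ (so the partial sums of $\lambda$ saturate at $n-n'$) and $\lambda_j\leq a_j$ (so they are bounded by sums of the largest $a_j$'s). The only difference is cosmetic: you sort decreasingly and check head-sums, whereas the paper sorts increasingly, checks tail-sums, and then invokes Karamata "with the index order reversed"—the computations are identical.
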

\begin{proof}
Let $\sigma$ be a permutation of $[k]$ such that $a_{\sigma(1)}-x_{\sigma(1)}\leq\cdots\leq a_{\sigma(k)}-x_{\sigma(k)}$. For each $k'\in[q]$, we have $$\sum_{j=k'}^{k}(a_{\sigma(j)}-x_{\sigma(j)})\leq\sum_{j=1}^{k}(a_{\sigma(j)}-x_{\sigma(j)})=\sum_{j=1}^{k}(a_j-x_j)=n-n'=\sum_{j=k'}^k(a_j-y_j),$$
as $a_j=y_j$ for all $j\in[q-1]$. While for each $k'\in[k]\setminus[q]$, $$\sum_{j=k'}^k(a_{\sigma(j)}-x_{\sigma(j)})\leq\sum_{j=k'}^ka_{\sigma(j)}\leq\sum_{j=k'}^ka_j =\sum_{j=k'}^k(a_j-y_j).$$ Hence, applying Lemma \ref{karamata} with the index order reversed, and using $f(x)=x^2$ is convex, we have $$\sum_{j=1}^k(a_j-x_j)^2=\sum_{j=1}^k(a_{\sigma(j)}-x_{\sigma(j)})^2\leq\sum_{j=1}^k(a_j-y_j)^2,$$
as required.
\end{proof}

\begin{lemma}\label{vertexcontribution}
Let $\mathcal{G}$ be a TCM. Let $A_1,\cdots,A_k$ be the maximal closed sets in $\mathcal{G}$ and suppose they have sizes $a_1\leq\cdots\leq a_k$, respectively. Fix an $i\in[k]$ and $v_i\in A_i$. For each $j\in[k]\setminus\{i\}$, let $s_j=\sum_{j'\in[j]\setminus\{i\}}a_{j'}$. Then, we have 
\begin{equation}\label{4.10statement}
\sum_{u\not\in A_i }2^{m^\mathcal{G}_{uv_i}}\leq\sum_{j\in[k]\setminus\{i\}}a_j2^{m-a_i-s_j}.
\end{equation}
\end{lemma}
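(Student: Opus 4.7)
The plan is to establish (\ref{4.10statement}) by showing that the multiset of multiplicities $(m^\mathcal{G}_{uv_i})_{u\notin A_i}$, sorted into a decreasing sequence, is majorised by a carefully chosen ``template'' sequence, and then invoking Lemma~\ref{karamata} with the convex function $f(x)=2^x$. Enumerate $[k]\setminus\{i\}$ in increasing order as $j_1<j_2<\cdots<j_{k-1}$, so that $a_{j_1}\leq\cdots\leq a_{j_{k-1}}$, and form the decreasing sequence $q_1\geq\cdots\geq q_{m-a_i}$ obtained by listing the value $m-a_i-s_{j_\ell}$ exactly $a_{j_\ell}$ times, for each $\ell=1,\ldots,k-1$. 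By construction $\sum_t 2^{q_t}$ is precisely the right-hand side of (\ref{4.10statement}). Let $p_1\geq\cdots\geq p_{m-a_i}$ be $(m^\mathcal{G}_{uv_i})_{u\notin A_i}$ sorted in decreasing order, so the left-hand side is $\sum_t 2^{p_t}$. If we prove that $\sum_{t=1}^L p_t\leq\sum_{t=1}^L q_t$ for every $L$, Lemma~\ref{karamata} delivers the result.

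The crux is this partial-sum bound. For a given $L$, fix $U\subseteq[m]\setminus A_i$ with $|U|=L$ attaining $\sum_{u\in U}m^\mathcal{G}_{uv_i}=\sum_{t=1}^L p_t$, and set $L_j=|U\cap A_j|$. Because $A_i$ is closed, in every triangle $\{v_i,u,w\}$ with $w\in A_i\setminus\{v_i\}$ the chosen edge is $v_iw$; because $A_j$ is closed, in every triangle $\{v_i,u,w\}$ with $u,w\in A_j$ ($j\neq i$) the chosen edge is $uw$. Hence, for $u\in A_j$ with $j\neq i$, only triangles $\{v_i,u,w\}$ with $w\in A_{j'}$ for some $j'\notin\{i,j\}$ can contribute to $m^\mathcal{G}_{uv_i}$, and each such triangle contributes at most $\mathbf{1}[\{u,w\}\cap U\neq\emptyset]$ to $\sum_{u'\in U}m^\mathcal{G}_{u'v_i}$ (since at most one edge of the triangle is chosen). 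Summing yields
\begin{equation*}
\sum_{u\in U}m^\mathcal{G}_{uv_i}\leq \sum_{\substack{j<j'\\ j,j'\in[k]\setminus\{i\}}}\bigl[a_j a_{j'}-(a_j-L_j)(a_{j'}-L_{j'})\bigr].
\end{equation*}
Setting $X_j=a_j-L_j$ and using $2\sum_{j<j'}X_jX_{j'}=(m-a_i-L)^2-\sum_j X_j^2$, this rewrites as $C-\tfrac12\bigl((m-a_i-L)^2-\sum_j X_j^2\bigr)$ with $C=\sum_{j<j'}a_j a_{j'}$ independent of $\vec L$, so maximising the bound reduces to maximising $\sum_j X_j^2$ subject to $\sum_j X_j=m-a_i-L$ and $0\leq X_j\leq a_j$.

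The main technical obstacle is precisely this constrained quadratic maximisation, and Lemma~\ref{minimisexiai} is tailor-made for it: taking the $a_{j_\ell}$ as the $a_\ell$ and the $L_{j_\ell}$ as the $x_\ell$, it shows the maximum is attained at the greedy template $L_{j_\ell}^*=a_{j_\ell}$ for $\ell<q$, $L_{j_q}^*=L-s_{j_{q-1}}$, and $L_{j_\ell}^*=0$ for $\ell>q$, where $q$ is the unique index with $s_{j_{q-1}}\leq L<s_{j_q}$. Substituting this template into the bound and simplifying telescopes to $\sum_{\ell<q}a_{j_\ell}(m-a_i-s_{j_\ell})+(L-s_{j_{q-1}})(m-a_i-s_{j_q})$, which is exactly $\sum_{t=1}^L q_t$. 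The boundary cases $L=0$ and $L=m-a_i$ are immediate. What makes this the delicate step is that the naive per-vertex bound $m^\mathcal{G}_{uv_i}\leq m-a_i-a_{j(u)}$ is too weak (it would give the much larger $\sum_j a_j 2^{m-a_i-a_j}$); the true economy comes from the fact that triangles spanning two different non-$A_i$ closed sets are a shared resource whose competing demands are encoded precisely by the quadratic $\sum_{j<j'}X_jX_{j'}$, and Lemma~\ref{minimisexiai} resolves this competition in one stroke.
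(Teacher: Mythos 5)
Your proposal is correct and follows essentially the same route as the paper's proof: reduce (\ref{4.10statement}) to a partial-sum (majorisation) comparison, apply Karamata's inequality with $f(x)=2^x$, bound the partial sums via the closed-set structure to obtain a quadratic in the occupation numbers, and invoke Lemma~\ref{minimisexiai} to identify the extremal occupation vector. The only cosmetic difference is in how you phrase the key intermediate inequality: you bound $\sum_{u\in U}m^\mathcal{G}_{uv_i}$ by summing over triangles spanning two non-$A_i$ closed sets and observing each contributes at most $\mathbf{1}[\{u,w\}\cap U\neq\emptyset]$, whereas the paper bounds each $m^\mathcal{G}_{uv_i}$ by $m-a_i-a_j$ and then subtracts $\sum_{j<j'}x_jx_{j'}$ as an overcount correction; expanding, both give $\sum_j x_j(m-a_i-a_j)-\sum_{j<j'}x_jx_{j'}$, so the arguments coincide.
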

\begin{proof}
Let $n=m-a_i$. Order the $n$ vertices $u\not\in A_i$ in decreasing order of $m_{uv_i}^{\mathcal{G}}$ as $u_1,\cdots,u_n$, and let $\lambda_\ell=m^{\mathcal{G}}_{u_\ell v_i}$ for all $\ell\in[n]$. For every $j\in[k]\setminus\{i\}$, let $\mu_\ell=m-a_i-s_j$ for all $1+s_j-a_j\leq\ell\leq s_j$. Then (\ref{4.10statement}) is equivalent to
$$\sum_{\ell=1}^n2^{\lambda_\ell}\leq\sum_{\ell=1}^n2^{\mu_\ell}.$$
Since both $(\lambda_\ell)$ and $(\mu_\ell)$ are decreasing sequences and $2^x$ is a convex function, by Lemma \ref{karamata}, it suffices to show for all $n'\in[n]$ we have
$$\sum_{\ell=1}^{n'}\lambda_\ell\leq\sum_{\ell=1}^{n'}\mu_\ell.$$

Fix any $n'\in[n]$. Suppose that among $u_1,\cdots,u_{n'}$, there are $x_j$ of these vertices belonging to $A_j$ for each $j\in[k]\setminus\{i\}$. Then, we claim that
\begin{equation}\label{eq:q-multiplicity}
    \sum_{\ell=1}^{n'} \lambda_\ell\le \sum_{j\in[k]\setminus\{i\}}x_j(m-a_j-a_i) - \sum_{\substack{1\le j<j'\le k\\j,j'\not=i}}x_jx_{j'}.
\end{equation}
Indeed, since $A_i$ and $A_j$ are closed sets, if $u_\ell\in A_j$, then $\lambda_\ell=m^{\mathcal{G}}_{u_\ell v_i}\leq m-a_j-a_i$. Furthermore, for any distinct $j,j'\in[k]$ not equal to $i$, if $u_{\ell}\in A_j$ and $u_{\ell'}\in A_{j'}$, then at least one of edge $u_\ell v_i$ and $u_{\ell'}v_i$ is not chosen in triangle $u_\ell u_{\ell'}v_i$, proving (\ref{eq:q-multiplicity}).

If $n=n'$, then $x_j=a_j$ for all $j\in[k]\setminus\{i\}$ and (\ref{eq:q-multiplicity}) implies
\begin{align*}
\sum_{\ell=1}^{n} \lambda_\ell&\le \sum_{j\in[k]\setminus\{i\}}a_j(m-a_j-a_i) - \sum_{\substack{1\le j<j'\le k\\j,j'\not=i}}a_ja_{j'}\\
&=\sum_{j\in[k]\setminus\{i\}}a_j(m-a_i-s_j)=\sum_{\ell=1}^n\mu_\ell,
\end{align*}
as required. If $n>n'$, Continuing from (\ref{eq:q-multiplicity}), we have
\begin{align*}
\sum_{\ell=1}^{n'} \lambda_\ell&\le \sum_{j\in[k]\setminus\{i\}}x_j(m-a_j-a_i) - \sum_{\substack{1\le j<j'\le k\\j,j'\not=i}}x_jx_{j'}\\
&=\sum_{j\in[k]\setminus\{i\}}x_j(m-a_i)-\sum_{j\in[k]\setminus\{i\}}x_ja_j-\frac12\left(\sum_{j\in[k]\setminus\{i\}}x_j\right)^2+\frac12\sum_{j\in[k]\setminus\{i\}}x_j^2\\
&=n'(m-a_i)-\frac12n'^2-\frac12\sum_{j\in[k]\setminus\{i\}}a_j^2+\frac12\sum_{j\in[k]\setminus\{i\}}(a_j-x_j)^2
\end{align*}
Hence, if we let $q\in[k]\setminus\{i\}$ be the unique index satisfying $s_q-a_q\leq n'<s_q$, then by Lemma \ref{minimisexiai}, the maximum on the right hand side of (\ref{eq:q-multiplicity}) is achieved when $x_j=a_j$ for all $j\in[q-1]\setminus\{i\}$, $x_q=n'-s_q+a_q$, and $x_j=0$ for all $j\in[k]\setminus([q]\cup\{i\})$. Therefore, we have 
\begin{align*}
\sum_{\ell=1}^{n'} \lambda_\ell&\le x_q(m-a_q-a_i)+\sum_{\substack{j\in[q-1]\\j\not=i}}a_j(m-a_j-a_i)-\sum_{\substack{j\in[q-1]\\j\not=i}}a_jx_q- \sum_{\substack{1\le j<j'\le q-1\\j,j'\not=i}}a_ja_{j'}\\
&=x_q(m-a_i-s_q)+\sum_{\substack{j\in[q-1]\\j\not=i}}a_j(m-a_i-s_j)=\sum_{\ell=1}^{n'}\mu_\ell,
\end{align*}
which proves the lemma.
\end{proof}

We are now ready to prove Theorem \ref{thm:upperbds}.
\begin{proof}[Proof of Theorem \ref{thm:upperbds}]
Let $\mathcal{G}$ be a TCM on $[m]$ with $w(\mathcal{G})=H(m)$. Let the maximal closed sets in $\mathcal{G}$ be $A_1,\cdots,A_k$. For each $i\in[k]$, let $|A_i|=a_i$ so that $\sum_{i=1}^ka_i=m$, and assume $a_1\leq\cdots\leq a_k$. By Proposition \ref{prop:sizeatleast2} we may assume $a_i\geq2$ for all $i\in[k]$. Recall that $h(m)=\frac{H(m)}{m2^{m-1}}$, we will show $h(m)\leq\frac{83}{192}$.

We can divide every edge in $\mathcal{G}$ into two types, either it is within a closed set, or it goes between two closed sets. For each $i\in[k]$, all edges within $A_i$ together contribute at most $$\sum_{xy\in A_i^{(2)}}2^{m_{xy}^\mathcal{G}}=\sum_{xy\in A_i^{(2)}}2^{m_{xy}^{\mathcal{G}[A_i]}+m-a_i}\leq H(a_i)2^{m-a_i}.$$ For each $i\in[k]$ and any vertex $v_i\in A_i$, by Lemma \ref{vertexcontribution}, edges of the form $v_iu$ with $u\not\in A_i$ contribute at most $$\sum_{1\leq j<i}a_j2^{m-a_i-\sum_{\ell=1}^ja_\ell}+\sum_{i<j\leq k}a_j2^{m-\sum_{\ell=1}^ja_\ell}.$$
Hence, the total contribution from all edges going across closed sets is at most
\begin{align*}
    N&=\frac12\sum_{i=1}^ka_i\left(\sum_{1\leq j<i}a_j2^{m-a_i-\sum_{\ell\in[j]}a_{\ell}}+\sum_{i<j\leq k}a_j2^{m-\sum_{\ell\in[j]}a_{\ell}}\right)\\
    &=2^{m-1}\sum_{1\leq i<j\leq k}a_ia_j\left(\frac{1}{2^{\sum_{\ell=1}^ja_{\ell}}}+\frac{1}{2^{a_j+\sum_{\ell=1}^ia_{\ell}}}\right)\\
    &=2^{m-1}\sum_{j=2}^k\frac{a_j}{2^{a_j}}\sum_{i=1}^{j-1}\left(\frac{a_i}{2^{\sum_{\ell=1}^ia_{\ell}}}+\frac{a_i}{2^{\sum_{\ell=1}^{j-1}a_{\ell}}}\right),
\end{align*}
where we have $\frac12$ factor in front of the first sum as it counts the contribution from each edge twice. We claim that $$N\leq 2^{m-1}\sum_{j=1}^k\frac{2}{3}\frac{a_j}{2^{a_j}},$$
where observe that we have added an additional $j=1$ term in this sum. First note that $(a_j/2^{a_j})$ is a decreasing sequence. Let $\mu_2=\frac43$, and let $\mu_j=\frac23$ for all $3\leq j\leq k$. Let $$\lambda_j=\sum_{i=1}^{j-1}\left(\frac{a_i}{2^{\sum_{\ell=1}^ia_{\ell}}}+\frac{a_i}{2^{\sum_{\ell=1}^{j-1}a_{\ell}}}\right)$$
for all $2\leq j\leq k$. Note that since both $(\mu_j)$ and $(\lambda_j)$ are decreasing sequences, it suffices to show that 
\begin{equation}\label{eq:partialsum}
\sum_{j=2}^{k'}\lambda_j\leq\frac{2k'}{3}=\sum_{j=2}^{k'}\mu_j
\end{equation}
for all $2\leq k'\leq k$, as then by Lemma \ref{lem:majorise}, we have $$N=2^{m-1}\sum_{j=2}^k\lambda_j\frac{a_j}{2^{a_j}}\leq2^{m-1}\sum_{j=2}^k\mu_j\frac{a_j}{2^{a_j}}\leq2^{m-1}\sum_{j=1}^k\frac23\frac{a_j}{2^{a_j}}.$$ 
We now prove (\ref{eq:partialsum}). Indeed, for every $2\leq k'\leq k$, we have
\begin{align*}
\sum_{j=2}^{k'}\lambda_j&=\sum_{j=2}^{k'}\sum_{i=1}^{j-1}\frac{a_i}{2^{\sum_{\ell=1}^ia_{\ell}}}+\sum_{j=2}^{k'}\sum_{i=1}^{j-1}\frac{a_i}{2^{\sum_{\ell=1}^{j-1}a_{\ell}}}\\
&=\sum_{i=1}^{k'-1}\frac{(k'-i)a_i}{2^{\sum_{\ell=1}^ia_{\ell}}}+\sum_{j=2}^{k'}\frac{\sum_{i=1}^{j-1}a_{\ell}}{2^{\sum_{\ell=1}^{j-1}a_{\ell}}}.
\end{align*}
We claim that if $a_t\geq 3$ for some $t\in[k']$, then both sums increase if we decrease $a_t$ by 1. Indeed, this increases the second sum as $f(x)=x/2^x$ is a decreasing function. This also increases the first sum as 
$$\frac{(k'-t)a_t}{2^{\sum_{\ell=1}^ta_{\ell}}}\leq\frac{(k'-t)(a_t-1)}{2^{\sum_{\ell=1}^ta_{\ell}-1}}$$
and $$\frac{(k'-i)a_i}{2^{\sum_{\ell=1}^ia_{\ell}}}\leq\frac{(k'-i)a_i}{2^{\sum_{\ell=1}^ia_{\ell}-1}}$$
for all $t<i\leq k'-1$. Therefore, by repeatedly applying this until $a_i=2$ for all $i\in[k']$, we have
\begin{align*}
\sum_{j=2}^{k'}\lambda_j&=\sum_{i=1}^{k'-1}\frac{(k'-i)a_i}{2^{\sum_{\ell=1}^ia_{\ell}}}+\sum_{j=2}^{k'}\frac{\sum_{i=1}^{j-1}a_{\ell}}{2^{\sum_{\ell=1}^{j-1}a_{\ell}}}\\
&\leq\sum_{i=1}^{k'-1}\frac{2k'-2i}{2^{2i}}+\sum_{j=2}^{k'}\frac{2j-2}{2^{2j-2}}\\
&=\sum_{j=1}^{k'-1}\frac{2k'-2j}{2^{2j}}+\sum_{j=1}^{k'-1}\frac{2j}{2^{2j}}\\
&=\sum_{j=1}^{k'-1}\frac{2k'}{2^{2j}} \leq \frac{2k'}3=\sum_{j=2}^{k'}\mu_j,
\end{align*}
as claimed.


It follows that $$H(m)\leq\max\left\{\sum_{i=1}^kH(a_i)2^{m-a_i}+2^{m-1}\sum_{i=1}^k\frac{2}{3}\frac{a_i}{2^{a_i}}\colon\sum_{i=1}^ka_i=m, a_i\geq2\mbox{ }\forall i\in[k]\right\},$$
and therefore 
\begin{align*}
h(m)&\leq\max\left\{\sum_{i=1}^k\frac{a_ih(a_i)}{m}+\sum_{i=1}^k\frac{2}{3}\frac{a_i}{m2^{a_i}}\colon\sum_{i=1}^ka_i=m, a_i\geq2\mbox{ }\forall i\in[k]\right\}\\
&=\max\left\{\sum_{i=1}^k\frac{a_i}{m}\left(h(a_i)+\frac{2}{3}\frac1{2^{a_i}}\right)\colon\sum_{i=1}^ka_i=m, a_i\geq2\mbox{ }\forall i\in[k]\right\}.
\end{align*}

Define a function $\varepsilon:\mathbb{N}\to\mathbb{R}$ by $\varepsilon(m)=0$ for $m\in[6]$, and $\varepsilon(m)=\sum_{j=7}^m\frac{1}{2^j}$ for $m\geq7$. We use induction to show that $h(m)+\frac{2}{3}\frac{1}{2^m}\leq\frac{5}{12}+\varepsilon(m)$ for all $m\geq2$. The base cases when $1\leq m\leq 6$ follows from the following table, which shows $h(m)+\frac{2}{3}\frac1{2^m}$ is maximised by $m=2$ in this range. 

\begin{table}[h]
\centering
\begin{tabular}{ |c|c|c|c|c|c|c|c|c| }
 \hline
 $m$ &1 & 2 & 3 & 4 & 5 & 6 \\
 \hline
 $h(m)$ & 0 & 0.25 & 0.333 & 0.375 & 0.375 & 0.380 \\
 \hline
 $h(m)+\frac{2}{3}\frac1{2^m}$ & 0.333 & 0.417 & 0.417 & 0.417 & 0.396 & 0.391 \\
 \hline
\end{tabular}
\end{table}
Assume this is true for all smaller $m$, and let $a_1,\cdots,a_k$ be such that $\sum_{i=1}^ka_i=m$ and $a_i\geq2$ for all $i\in[k]$. Then by induction hypothesis, we have
\begin{align*}
    \sum_{i=1}^k\frac{a_i}{m}\left(h(a_i)+\frac{2}{3}\frac1{2^{a_i}}\right)+\frac{2}{3}\frac{1}{2^m}&\leq\sum_{i=1}^k\frac{a_i}{m}\left(\frac{5}{12}+\varepsilon(a_i)\right)+\frac{2}{3}\frac{1}{2^m}\\
    &=\frac{5}{12}+\sum_{i=7}^{k}\frac{a_i}m\sum_{j=7}^{a_i}\frac1{2^j}+\frac{2}{3}\frac{1}{2^m}\\
    &=\frac{5}{12}+\sum_{j=7}^{m-1}\frac1{2^j}\sum_{i:a_i\geq j}\frac{a_i}m+\frac{2}{3}\frac{1}{2^m}\\
    &\leq\frac{5}{12}+\sum_{j=7}^{m-1}\frac1{2^j}+\frac{1}{2^m}\\
    &\leq\frac{5}{12}+\varepsilon(m).
\end{align*}
Since this is true for any such $a_1,\cdots,a_k$, we get $h(m)+\frac{2}{3}\frac1{2^m}\leq\frac{5}{12}+\varepsilon(m)$, as required. Finally, note that $\varepsilon(m)<\frac1{2^6}$ for all $m$, hence $h(m)\leq\frac{5}{12}+\varepsilon(m)-\frac{2}{3}\frac1{2^m}<\frac{83}{192}$ for all $m$. 
\end{proof}

We deduce Theorem \ref{thm:generalupperbds} as a corollary of Theorem \ref{thm:upperbds}.
\begin{proof}[Proof of Theorem \ref{thm:generalupperbds}]
The case when $r=3$ follows from Theorem \ref{main} and Theorem \ref{thm:upperbds}. Now let $r\geq 4$, and let $A\in\Avoid(m,r,M)$. 

For every column $c$ in $A$, define the \emph{3-support} of $c$ to be the index set of its entries that are equal to 0, 1 or 2. For every $X\subset [m]$ of size $k$, let $C^*_3(X)$ be the set of columns in $A$ with 3-support equal to $X$, and let $C_3(X)$ be the set of distinct (0,1,2)-columns formed by restricting columns in $C^*_3(X)$ to rows with indices in $X$. Viewing $C_3(X)$ as a $k$-rowed $(0,1,2)$-matrix, we must have $C_3(X)\in\Avoid(k,3,M)$. Hence, $|C_3(X)|\leq\forb(k,3,M)$, and thus $|C^*_3(X)|\leq\forb(k,3,M)(r-3)^{m-k}$. Therefore, we have
\begin{align*}
|A|&=\sum_{X\subset[m]}|C^*_3(X)|\leq\sum_{k=0}^m\binom{m}k\forb(k,3,M)(r-3)^{m-k}\\
&\leq\sum_{k=0}^m\binom{m}k\left(1+\frac{83}{192}\right)k2^{k-1}(r-3)^{m-k}+\sum_{k=0}^m\binom{m}k2^k(r-3)^{m-k}\\
&=\left(1+\frac{83}{192}\right)m(r-1)^{m-1}+(r-1)^m.
\end{align*}
Since this is true for all $A\in\Avoid(m,r,M)$, we have $\forb(m,r,M)\leq\left(1+\frac{83}{192}\right)m(r-1)^{m-1}+(r-1)^m$, as required.
\end{proof}

\section{Lower bounds}\label{lowersection}
In this section we prove Theorem \ref{thm:generallowerbds}, which bounds $H_2(m,\alpha)$ and $\forb(m,r,M)$ from below with an explicit construction, and Theorem \ref{thm:lowerbds}, which determines $H_2(m,2)$. If $\alpha\not=0$, let $h_2(m,\alpha)=2H_2(m,\alpha)/m\alpha^m$ for all $m\geq 1$. We begin by showing
$H_{2}(m,\alpha)$ and $h_{2}(m,\alpha)$ satisfy the following recurrence relations. 
\begin{lemma}\label{hrbrecurrence}
For all $m\geq 3$, we have $$H_{2}(m,\alpha)=\max\left\{H_{2}(a,\alpha)\alpha^{m-a}+H_{2}(b,\alpha)\alpha^{m-b}+ab\colon a+b=m\right\},$$
$$h_2(m,\alpha)=\max\left\{\frac{ah_2(a,\alpha)}m+\frac{bh_2(b,\alpha)}m+\frac{2ab}{m\alpha^m}\colon a+b=m\right\}.$$
\end{lemma}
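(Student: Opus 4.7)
The plan is to unpack Definition \ref{tcmdef} of a 2-recursive TCM and directly compute $w(\mathcal{G},\alpha)$ by splitting edges into three types according to a decomposition $\mathcal{G}=\mathcal{G}_1\cup\mathcal{G}_2\cup\mathcal{G}_3$. This will immediately yield the recurrence in both directions: the upper bound by maximality of $H_2$ on the parts, and the matching lower bound by gluing together extremal pieces.

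Fix $m\geq 3$ and let $\mathcal{G}$ be a 2-recursive TCM on $[m]$ with associated partition $V=V_1\cup V_2$ of sizes $a=|V_1|$ and $b=|V_2|$, $a+b=m$, $a,b\geq 1$, and building blocks $\mathcal{G}_1,\mathcal{G}_2,\mathcal{G}_3$ as in Definition \ref{tcmdef}. The key observation is how each edge multiplicity in $\mathcal{G}$ relates to those in its pieces. For an edge $uv$ with $u,v\in V_1$, the only triples that contribute copies of $uv$ are (i) triples entirely within $V_1$, which contribute $m^{\mathcal{G}_1}_{uv}$ copies, and (ii) triples $\{u,v,w\}$ with $w\in V_2$, of which there are exactly $b$, each contributing one copy to $\mathcal{G}_3$. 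Therefore $m^{\mathcal{G}}_{uv}=m^{\mathcal{G}_1}_{uv}+b$. Symmetrically, for $vw$ with $v,w\in V_2$, we have $m^{\mathcal{G}}_{vw}=m^{\mathcal{G}_2}_{vw}+a$. For a cross edge $vw$ with $v\in V_1$, $w\in V_2$, no triple selects it (triples entirely inside $V_i$ never pick $vw$, while mixed triples always pick an inside edge by construction of $\mathcal{G}_3$), so $m^{\mathcal{G}}_{vw}=0$.

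Summing $\alpha^{m^{\mathcal{G}}_{xy}}$ over all three edge types then gives
\begin{equation*}
w(\mathcal{G},\alpha)=\alpha^{b}\,w(\mathcal{G}_1,\alpha)+\alpha^{a}\,w(\mathcal{G}_2,\alpha)+ab,
\end{equation*}
using that there are exactly $ab$ cross edges. Since $\mathcal{G}_1$ and $\mathcal{G}_2$ are themselves 2-recursive TCMs on their respective vertex sets, we have $w(\mathcal{G}_i,\alpha)\leq H_2(|V_i|,\alpha)$, which gives $w(\mathcal{G},\alpha)\leq H_2(a,\alpha)\alpha^{m-a}+H_2(b,\alpha)\alpha^{m-b}+ab$. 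Taking the maximum over $\mathcal{G}$ yields the $\leq$ direction of the recurrence. Conversely, for any split $a+b=m$ with $a,b\geq 1$, pick extremal 2-recursive TCMs $\mathcal{G}_1$ on any $a$-subset and $\mathcal{G}_2$ on its complement; then $\mathcal{G}_1\cup\mathcal{G}_2\cup\mathcal{G}_3$ (with $\mathcal{G}_3$ the canonical cross-selection from Definition \ref{tcmdef}) is a 2-recursive TCM on $[m]$ realising the right-hand side, proving equality.

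The $h_2$ identity follows by dividing both sides of the first recurrence by $m\alpha^m/2$ and rewriting each term $\alpha^{m-a}H_2(a,\alpha)$ as $\tfrac{m\alpha^m}{2}\cdot\tfrac{a}{m}\cdot\tfrac{2H_2(a,\alpha)}{a\alpha^a}=\tfrac{m\alpha^m}{2}\cdot\tfrac{a\,h_2(a,\alpha)}{m}$, and analogously for the $b$-term, while $ab$ becomes $\tfrac{2ab}{m\alpha^m}$ after division. The main (and only) subtle point in this proof is the edge-multiplicity bookkeeping for $\mathcal{G}_3$; once that is correctly set up, both directions of the recurrence are immediate, so I do not anticipate a significant obstacle.
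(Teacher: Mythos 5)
Your proof is correct and follows essentially the same route as the paper's: you split edges by type via the decomposition $\mathcal{G}=\mathcal{G}_1\cup\mathcal{G}_2\cup\mathcal{G}_3$, compute $m^{\mathcal{G}}_{xy}$ exactly as the paper does ($m^{\mathcal{G}_1}_{xy}+b$, $m^{\mathcal{G}_2}_{xy}+a$, and $0$ for cross edges), derive $w(\mathcal{G},\alpha)=\alpha^{b}w(\mathcal{G}_1,\alpha)+\alpha^{a}w(\mathcal{G}_2,\alpha)+ab$, and then argue both directions by maximality and by gluing extremal pieces. The $h_2$ rescaling at the end also matches the paper.
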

\begin{proof}
Let $\mathcal{G}$ be a 2-recursive TCM on $[m]$ with $w(\mathcal{G},\alpha)=H_{2}(m,\alpha)$. Let $m=V_1\cup V_2$, $\mathcal{G}_1$, $\mathcal{G}_2$ and  $\mathcal{G}_3$ be as in Definition \ref{tcmdef}, and let $m'=|V_1|$. Then $m^{\mathcal{G}}_{xy}=m^{\mathcal{G}_1}_{xy}+m-m'$ for all distinct $x,y\in V_1$, $m^{\mathcal{G}}_{xy}=m^{\mathcal{G}_2}_{xy}+m'$ for all distinct $x,y\in V_2$, and $m^{\mathcal{G}}_{xy}=0$ for all $x\in V_1,y\in V_2$. Hence 
\begin{align*}
H_{2}(m,\alpha)&=w(\mathcal{G},\alpha)=\sum_{xy\in[m]^{(2)}}\alpha^{m^{\mathcal{G}}_{xy}}\\
&=\sum_{xy\in V_1^{(2)}}\alpha^{m^{\mathcal{G}_1}_{xy}+m-m'}+\sum_{xy\in V_2^{(2)}}\alpha^{m^{\mathcal{G}_2}_{xy}+m'}+\sum_{x\in V_1,y\in V_2}1\\
&\leq H_{2}(m',\alpha)\alpha^{m-m'}+H_{2}(m-m',\alpha)\alpha^{m'}+m'(m-m'),
\end{align*}
which is at most the right hand side. 

On the other hand, for every $a+b=m$, let $\mathcal{G}_a,\mathcal{G}_b$ be 2-recursive triangular multigraphs on vertex sets $[a]$ and $[m]\setminus[a]$, respectively, with $w(\mathcal{G}_a)=H_{2}(a,\alpha)$ and $w(\mathcal{G}_b)=H_{2}(b,\alpha)$. Let $\mathcal{G}$ be the 2-recursive 
TCM formed as in Definition \ref{tcmdef} by taking $[m]=[a]\cup([m]\setminus[a])$, $\mathcal{G}_1=\mathcal{G}_a$ and $\mathcal{G}_2=\mathcal{G}_b$. Then a calculation similar to above shows
$$H_{2}(a,\alpha)\alpha^{m-a}+H_{2}(b,\alpha)\alpha^{m-b}+ab=\sum_{xy\in[m]^{(2)}}\alpha^{m^\mathcal{G}_{xy}}=w(\mathcal{G},\alpha)\leq H_{2}(m,\alpha),$$
proving the recurrence formula for $H_2(m,\alpha)$. The recurrence formula for $h_2(m,\alpha)$ follows after dividing through by $m\alpha^m/2$.
\end{proof}

Lemma \ref{hrbrecurrence} implies that to compute $H_2(m,\alpha)$ and $h_2(m,\alpha)$, it suffices to determine for each $m\geq 3$, which split of $m$ into $a+b$ achieves the maximum on the right hand side of the recurrence formulas. In Section \ref{lowerboundconstruction}, we prove Theorem \ref{thm:generallowerbds} by obtaining lower bounds for these two quantities using a specific split. Then, in Section \ref{h2exact}, we prove Theorem \ref{thm:lowerbds} by showing that this split is indeed optimal in the case when $r=3$ and $\alpha=2$. 

\subsection{Lower bound construction}\label{lowerboundconstruction}
For every $m\geq 3$, let $k=k(m)$ be the unique integer such that $2^k+2^{k-1}\leq m<2^k+2^{k+1}$. Define inductively a 2-recursive TCM $\mathcal{G}(m)$ on $[m]$ for every $m\geq 1$ as follows. For $m=1,2$, let $\mathcal{G}(m)$ be the empty graph. For $m\geq 3$, let $\mathcal{G}(m)$ be the 2-recursive TCM formed as in Definition \ref{tcmdef} by taking $[m]=[2^k]\cup([m]\setminus [2^k])$, $\mathcal{G}_1=\mathcal{G}(2^k)$ and taking $\mathcal{G}_2$ isomorphic to $\mathcal{G}(m-2^k)$. 

For every $m\geq 1$ and $\alpha\not=0$, let $\overline{H}_2(m,\alpha)=\sum_{xy\in[m]^{2}}\alpha^{m^{\mathcal{G}(m)}_{xy}}$ and $\overline{h}_2(m,\alpha)=2\overline{H}_2(m,\alpha)/m\alpha^m$. By the construction above, and using calculation similar to the proof of Lemma \ref{hrbrecurrence} above, we have
\begin{equation}\label{h2barrecurrence}
\overline{h}_2(m,\alpha)=\frac{2^k\overline{h}_2(2^k,\alpha)}{m}+\frac{(m-2^k)\overline{h}_2(m-2^k,\alpha)}{m}+\frac{2^{k+1}(m-2^k)}{m\alpha^m}. 
\end{equation}
Moreover, since each $\mathcal{G}(m)$ is a 2-recursive TCM, we have that $\overline{H}_2(m,\alpha),\overline{h}_2(m,\alpha)$ are lower bounds for $H_2(m,\alpha)$ and $h_2(m,\alpha)$, respectively.

We first show that $\overline{h}_2(m,\alpha)$ has a nice formula when $m$ is a power of 2.
\begin{lemma}\label{2knice}
For every $k\geq 0$ and $\alpha\not=0$, $\overline{h}_2(2^k,\alpha)=\sum_{j=1}^k2^{j-1}/\alpha^{2^j}$.
\end{lemma}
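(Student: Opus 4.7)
The plan is a straightforward induction on $k$ using the recurrence (\ref{h2barrecurrence}). The base cases are immediate: for $k=0$, the graph $\mathcal{G}(1)$ is empty, so $\overline{H}_2(1,\alpha)=0=\overline{h}_2(1,\alpha)$, which agrees with the empty sum on the right-hand side. For $k=1$, the graph $\mathcal{G}(2)$ is empty but has one pair $\{1,2\}$ with edge multiplicity $0$, giving $\overline{H}_2(2,\alpha)=1$ and $\overline{h}_2(2,\alpha)=2/(2\alpha^2)=1/\alpha^2$, which matches $\sum_{j=1}^1 2^{j-1}/\alpha^{2^j}$.

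For the inductive step, the key observation I would verify first is that $k(2^k)=k-1$ whenever $k\geq 2$: indeed $2^{k-1}+2^{k-2}=3\cdot 2^{k-2}\leq 2^k< 3\cdot 2^{k-1}=2^{k-1}+2^k$, so $k-1$ is the unique integer satisfying the defining inequality. Hence the construction of $\mathcal{G}(2^k)$ uses the balanced split $2^k=2^{k-1}+2^{k-1}$, meaning that both pieces in the recurrence (\ref{h2barrecurrence}) are $2^{k-1}$.

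Plugging this into (\ref{h2barrecurrence}), I get
\begin{align*}
\overline{h}_2(2^k,\alpha)
&= \frac{2^{k-1}\overline{h}_2(2^{k-1},\alpha)}{2^k}+\frac{2^{k-1}\overline{h}_2(2^{k-1},\alpha)}{2^k}+\frac{2^k\cdot 2^{k-1}}{2^k\alpha^{2^k}}\\
&= \overline{h}_2(2^{k-1},\alpha)+\frac{2^{k-1}}{\alpha^{2^k}}.
\end{align*}
Applying the induction hypothesis $\overline{h}_2(2^{k-1},\alpha)=\sum_{j=1}^{k-1}2^{j-1}/\alpha^{2^j}$ and absorbing the extra term as the $j=k$ summand gives exactly $\sum_{j=1}^k 2^{j-1}/\alpha^{2^j}$, as required.

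There is no real obstacle here: once the correct value of $k(2^k)$ is identified so that the balanced split is used, the recurrence (\ref{h2barrecurrence}) collapses the two halves into a single term and the induction closes immediately. The only thing worth double-checking carefully is the index computation $k(2^k)=k-1$, since a mistake there would propagate through and destroy the clean telescoping.
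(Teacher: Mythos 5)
Your proof matches the paper's: both proceed by induction on $k$, verify the base cases $k=0,1$, and for $k\geq 2$ apply the recurrence (\ref{h2barrecurrence}) with the balanced split $2^k=2^{k-1}+2^{k-1}$ to telescope. The only addition in your write-up is the explicit check that $k(2^k)=k-1$, which the paper leaves implicit but is indeed the key index computation justifying the balanced split.
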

\begin{proof}
We use induction on $k$. The cases when $k=0,1$ follow from $\overline{H}_2(1,\alpha)=0, \overline{H}_2(2,\alpha)=1$. Now assume $k\geq 2$. Then, by induction and (\ref{h2barrecurrence}), we have
\begin{align*}
\overline{h}_2(2^k,\alpha)&=2\cdot\frac{2^{k-1}\overline{h}_2(2^{k-1},\alpha)}{2^k}+\frac{2^k\cdot2^{k-1}}{2^k\alpha^{2^k}}\\
&=\sum_{j=1}^{k-1}\frac{2^{j-1}}{\alpha^{2^j}}+\frac{2^{k-1}}{\alpha^{2^k}}=\sum_{j=1}^k\frac{2^{j-1}}{\alpha^{2^j}},
\end{align*}
as required.
\end{proof}

Motivated by this, let $g(k,\alpha)=\sum_{j=1}^k2^{j-1}/\alpha^{2^j}$ for every $\alpha\not=0$, and let $\lambda(\alpha)=\sum_{j=1}^\infty2^{j-1}/\alpha^{2^j}=\lim_{k\to\infty}g(k,\alpha)$ for every $\alpha>1$. Roughly speaking, the following lemma shows that $\overline{h_2}(m,\alpha)$ do not deviate much from $g(\floor{\log_2 m},\alpha)$. 

\begin{lemma}\label{tildeasymptotic}
For every $k\geq1$ and $\alpha\not=0$, let $\widetilde{g}(k,\alpha)=\min\{\overline{h}_2(m,\alpha)\colon2^k\leq m<2^{k+1}\}$. Then, $$\widetilde{g}(k+1,\alpha)\geq\min\left\{\frac{g(k,\alpha)}{3}+\frac{2\widetilde{g}(k,\alpha)}{3}, \frac{g(k+1,\alpha)}{2}+\frac{\widetilde{g}(k,\alpha)}{2}\right\}.$$
Therefore, for all $\alpha>1$, $\lim_{k\to\infty}\widetilde{g}(k,\alpha)=\lim_{k\to\infty}g(k,\alpha)=\liminf_{m\to\infty}\overline{h}_2(m,\alpha)=\lambda(\alpha)$.
\end{lemma}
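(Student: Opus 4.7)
The plan is to analyze, for each $m\in[2^{k+1},2^{k+2})$, which of the two admissible values $k(m)\in\{k,k+1\}$ occurs, and then unfold the recurrence~\eqref{h2barrecurrence} and apply the definition of $\widetilde{g}$ as a lower bound on the piece of size less than $2^{k+1}$. First I would observe that $2^{k+1}\leq m<3\cdot 2^k$ forces $k(m)=k$ (Case~1), while $3\cdot 2^k\leq m<2^{k+2}$ forces $k(m)=k+1$ (Case~2); in either case the complementary piece $m-2^{k(m)}$ lies in $[2^k,2^{k+1})$. Since $\overline{h}_2(2^{k(m)},\alpha)=g(k(m),\alpha)$ by Lemma~\ref{2knice} and (for $\alpha>0$) the tail term $\tfrac{2^{k(m)+1}(m-2^{k(m)})}{m\alpha^m}$ is nonnegative, \eqref{h2barrecurrence} gives
\begin{equation*}
\overline{h}_2(m,\alpha)\;\geq\;\frac{2^{k(m)}}{m}\,g(k(m),\alpha)+\frac{m-2^{k(m)}}{m}\,\widetilde{g}(k,\alpha).
\end{equation*}

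Next I would use the monotonicity chain $\widetilde{g}(k,\alpha)\leq g(k,\alpha)\leq g(k+1,\alpha)$ (the first inequality holds because $\overline{h}_2(2^k,\alpha)=g(k,\alpha)$ is one of the values in the min defining $\widetilde{g}(k,\alpha)$, and the second by definition of $g$). This makes the right-hand side above a non-decreasing linear function of the weight $2^{k(m)}/m$ on the larger piece, so its minimum over the allowed values of $m$ is attained (in the limit) at the smallest such weight. In Case~1 the infimum of $2^k/m$ is $1/3$ (approached as $m\to 3\cdot 2^k$), producing the bound $\tfrac{g(k,\alpha)}{3}+\tfrac{2\widetilde{g}(k,\alpha)}{3}$; in Case~2 the infimum of $2^{k+1}/m$ is $1/2$ (approached as $m\to 2^{k+2}$), producing the bound $\tfrac{g(k+1,\alpha)}{2}+\tfrac{\widetilde{g}(k,\alpha)}{2}$. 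Taking the minimum over both cases yields the claimed recurrence inequality. (The two infima are never attained for integer $m$, but since we only need ``$\geq$'' this is harmless.)

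For the consequence, I would first note $\widetilde{g}(k,\alpha)\leq g(k,\alpha)\to\lambda(\alpha)$ as $k\to\infty$, so $L:=\liminf_k\widetilde{g}(k,\alpha)\leq\lambda(\alpha)$. For any $\epsilon>0$, all sufficiently large $k$ satisfy both $g(k,\alpha)\geq\lambda(\alpha)-\epsilon$ and $\widetilde{g}(k,\alpha)\geq L-\epsilon$; substituting into the recurrence inequality, taking $\liminf$, and letting $\epsilon\to 0$ gives
\begin{equation*}
L\;\geq\;\min\!\left\{\frac{\lambda(\alpha)+2L}{3},\;\frac{\lambda(\alpha)+L}{2}\right\}.
\end{equation*}
Each expression on the right is $L+c(\lambda(\alpha)-L)$ for a positive constant $c$, hence strictly exceeds $L$ unless $L=\lambda(\alpha)$. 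Therefore $L=\lambda(\alpha)$, so the limit exists and equals $\lambda(\alpha)$. Finally, $\liminf_m\overline{h}_2(m,\alpha)=\lim_k\widetilde{g}(k,\alpha)$ by a direct sandwich: every $m\geq 2$ lies in some $[2^k,2^{k+1})$ so $\overline{h}_2(m,\alpha)\geq\widetilde{g}(k,\alpha)$, while each $\widetilde{g}(k,\alpha)$ is realised by an actual integer $m$ in that interval.

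I do not anticipate any serious obstacle here; the whole argument is a careful, case-based unfolding of~\eqref{h2barrecurrence}. The one step requiring some care is the $\liminf$ manipulation: one cannot naively pass $\liminf$ through a $\min$, so the argument must go through uniform $\epsilon$-perturbations as sketched above.
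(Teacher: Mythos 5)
Your proof is correct and follows essentially the same approach as the paper: the same two-case analysis of $m\in[2^{k+1},2^{k+2})$ according to $k(m)$, the same use of (\ref{h2barrecurrence}), Lemma \ref{2knice}, and the weight bounds $2^k/m\geq 1/3$ resp.\ $2^{k+1}/m\geq 1/2$, and the same final sandwich identifying $\liminf_m\overline{h}_2(m,\alpha)$ with $\lim_k\widetilde{g}(k,\alpha)$. The one small difference is in proving $\lim_k\widetilde{g}(k,\alpha)=\lambda(\alpha)$: the paper first observes that the recurrence inequality together with $g(k,\alpha)\geq\widetilde{g}(k,\alpha)$ implies $\widetilde{g}(k,\alpha)$ is non-decreasing and bounded above by $\lambda(\alpha)$, hence has a limit, and then passes to the limit directly; you instead work with $L:=\liminf_k\widetilde{g}(k,\alpha)$ and an $\epsilon$-perturbation argument. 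Both are valid and about equally long, so this is a cosmetic rather than substantive difference.
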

\begin{proof}
Let $k\geq 1$ and let $2^{k+1}\leq m<2^{k+2}$. Note that $\widetilde{g}(k,\alpha)\leq\overline{h}_2(2^k,\alpha)=g(k,\alpha)$.

\noindent\textbf{Case 1.} If $2^{k+1}\leq m<2^k+2^{k+1}$, set $c=m-2^k$ and note that $2^k\leq c<2^{k+1}$. By (\ref{h2barrecurrence}), Lemma \ref{2knice} and the definition of $\widetilde{g}(k,\alpha)$, we have \begin{align*}
\overline{h}_2(m,\alpha)&\geq\frac{2^k\overline{h}_2(2^k,\alpha)}{m}+\frac{c\overline{h}_2(c,\alpha)}{m}=\frac{2^kg(k,\alpha)}{m}+\frac{c\overline{h}_2(c,\alpha)}{m}\\
&\geq\frac{2^kg(k,\alpha)}{m}+\frac{c\widetilde{g}(k,\alpha)}{m}\geq \frac{g(k,\alpha)}{3}+\frac{2\widetilde{g}(k,\alpha)}{3}.
\end{align*}

\noindent\textbf{Case 2.} If $2^k+2^{k+1}\leq m<2^{k+2}$, set $c=m-2^{k+1}$ and note that $2^k\leq c<2^{k+1}$. Again, by (\ref{h2barrecurrence}). Lemma \ref{2knice} and the definition of $\widetilde{g}(k)$, we have 
\begin{align*}
\overline{h}_2(m,\alpha)&\geq\frac{2^{k+1}\overline{h}_2(2^{k+1},\alpha)}{m}+\frac{c\overline{h}_2(c,\alpha)}{m}=\frac{2^{k+1}g(k+1,\alpha)}{m}+\frac{c\overline{h}_2(c,\alpha)}{m}\\
&\geq\frac{2^{k+1}g(k+1,\alpha)}{m}+\frac{c\widetilde{g}(k,\alpha)}{m}\geq \frac{g(k+1,\alpha)}{2}+\frac{\widetilde{g}(k,\alpha)}{2}.
\end{align*}
Hence, for all $k\geq1$, we have $\widetilde{g}(k+1,\alpha)=\min\{\overline{h}_2(m,\alpha)\colon2^{k+1}\leq m<2^{k+2}\}\geq\min\{\frac13g(k,\alpha)+\frac23\widetilde{g}(k,\alpha),\frac12g(k+1,\alpha)+\frac12\widetilde{g}(k,\alpha)\}$, as claimed. 

Moreover, it follows that for every $k\geq 1$, we have $\widetilde{g}(k+1,\alpha)-\widetilde{g}(k,\alpha)\geq\frac13(g(k,\alpha)-\widetilde{g}(k,\alpha))$ and $\widetilde{g}(k+1,\alpha)-\widetilde{g}(k,\alpha)\geq \frac12(g(k+1,\alpha)-\widetilde{g}(k,\alpha))$, both of them imply that $\widetilde{g}(k+1,\alpha)\geq \widetilde{g}(k,\alpha)$. When $\alpha>1$, this means $\widetilde{g}(k,\alpha)$ is increasing in $k$ and bounded above by $\lambda(\alpha)$, so $\lim_{k\to\infty}\widetilde{g}(k,\alpha)$ exists. Taking limit on both sides of $\widetilde{g}(k+1,\alpha)\geq\min\{\frac13g(k,\alpha)+\frac23\widetilde{g}(k,\alpha),\frac12g(k+1,\alpha)+\frac12\widetilde{g}(k,\alpha)\}$, we see that in fact $\lim_{k\to\infty}\widetilde{g}(k,\alpha)=\lim_{k\to\infty}g(k,\alpha)=\lambda(\alpha)$. Finally, $\liminf_{m\to\infty}\overline{h}_2(m,\alpha)=\lim_{m\to\infty}\inf_{n\geq m}\overline{h}_2(n,\alpha)=\lim_{k\to\infty}\inf_{n\geq 2^k}\overline{h}_2(n,\alpha)=\lim_{k\to\infty}\inf_{k'\geq k}\widetilde{g}(k',\alpha)=\lim_{k\to\infty}\widetilde{g}(k,\alpha)=\lambda(\alpha)$. 
\end{proof}

We can now quickly deduce Theorem \ref{thm:generallowerbds}.
\begin{proof}[Proof of Theorem \ref{thm:generallowerbds}]
For every $\alpha>1$, $\liminf_{n\to\infty}\overline{h}_2(m,\alpha)=\lambda(\alpha)$ by Lemma \ref{tildeasymptotic}. Therefore, for every $\epsilon>0$ and all sufficiently large $m$, we have $\overline{h}_2(m,\alpha)>\lambda(\alpha)-\epsilon$, and thus $H_2(m,\alpha)\geq\overline{H}_2(m,\alpha)>(\lambda(\alpha)-\epsilon)m\alpha^m/2$.

For every $r\geq 3$, apply the above to $\alpha_r=\frac{r-1}{r-2}$. Then, by Theorem \ref{main}, for every $\epsilon>0$ and all sufficiently large $m$ we have
\begin{align*}
\forb(m,r,M)&\geq(r-1)^m+m(r-1)^{m-1}+H_2(m,\alpha_r)(r-2)^{m-2}\\
&\geq(r-1)^m+m(r-1)^{m-1}+\overline{H}_2(m,\alpha_r)(r-2)^{m-2}\\
&\geq(r-1)^m+m(r-1)^{m-1}+m(r-1)^{m-1}(\lambda(\alpha_r)-\epsilon)\frac{r-1}{2(r-2)^2}\\
&\geq(r-1)^m+m(r-1)^{m-1}\left(1+\frac{r-1}{2(r-2)^2}\lambda\left(\frac{r-1}{r-2}\right)-\epsilon\right),
\end{align*}
as required.
\end{proof}

\subsection{Optimal splits}\label{h2exact}
In this subsection, we show that in the case when $\alpha=2$, the split we used in the lower bound construction in Section \ref{lowerboundconstruction} is actually optimal with the exception when $m=6$, which proves Theorem \ref{thm:lowerbds}. 

For simplicity, write $H_2(m)=H_2(m,2)$ and $h_2(m)=h_2(m,2)$. A table of values of $H_2(m)$ and $h_2(m)$, rounded to 3 decimals, for small $m$ is given below. 
\begin{table}[h]
\centering
\begin{tabular}{ |c|c|c|c|c|c|c|c|c| }
 \hline
 $m$ &1 & 2 & 3 & 4 & 5 & 6 & 7 & 8\\
\hline
 $H_2(m)$ & 0 & 1 & 4 & 12 & 30 & 73 & 172 & 400\\
 \hline
 $h_2(m)$ & 0.000 & 0.250 & 0.333 & 0.375 & 0.375 & 0.380 & 0.384 & 0.391\\
 \hline
\end{tabular}
\end{table}

\begin{thm}\label{optimalsplit}
For all $m\geq2$ except 6, we have $$h_2(m)=\frac{2^kh_2(2^k)}{m}+\frac{(m-2^k)h_2(m-2^k)}{m}+\frac{2^k(m-2^k)}{m2^{m-1}},$$ where $k$ is the unique integer such that $2^k+2^{k-1}\leq m<2^k+2^{k+1}$.  
\end{thm}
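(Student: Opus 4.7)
The proof proceeds by strong induction on $m$. Base cases $m \leq 8$ are verified by direct enumeration of splits using Lemma \ref{hrbrecurrence}; in particular, the $m = 6$ case confirms that $H_2(6) = 73$ is uniquely achieved by the split $(3,3)$, not by splitting off $2^k = 4$, which is why $m = 6$ must be excluded.

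For the inductive step, fix $m \geq 9$ with $m \neq 6$, set $k = k(m)$, and assume the claim for all $2 \leq m' < m$ with $m' \neq 6$. By Lemma \ref{hrbrecurrence}, it suffices to maximize
\[
\phi(a) := H_2(a) \cdot 2^{m-a} + H_2(m-a) \cdot 2^a + a(m-a)
\]
over $a \in \{1, \ldots, m-1\}$ and show the maximum occurs at $a = 2^k$. By the symmetry $\phi(a) = \phi(m-a)$, we may restrict to $a \leq m/2$. The plan has two parts: first, reduce to splits where one side is a power of two; second, compare across such splits.

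For the first part, suppose $a$ is not a power of two. By induction, $H_2(a) = H_2(u) \cdot 2^{v} + H_2(v) \cdot 2^u + uv$ where $u = 2^{k_a}$ and $v = a - u$. Setting $w = m - a$, substitute into $\phi(a)$ and apply the inductive lower bound $H_2(m - u) \geq H_2(v) \cdot 2^{w} + H_2(w) \cdot 2^v + vw$ (corresponding to the admissible split $(v, w)$ of $m - u$); the difference $\phi(u) - \phi(a)$ is then bounded below by $v \cdot \bigl[(2^u - 1)w - (2^w - 1)u\bigr]$, which is non-negative whenever $u \geq w$ by the monotonicity of $x \mapsto (2^x - 1)/x$. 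An analogous expansion of $H_2(m-a)$ gives $\phi(2^{k_{m-a}}) \geq \phi(a)$ whenever $2^{k_{m-a}} \geq a$. Between these two reductions, together with a refined multi-step version in boundary cases where neither expansion suffices, the problem reduces to splits of the form $(2^{k'}, m - 2^{k'})$. For the second part, combine Lemma \ref{2knice} with the inductive formula for $H_2(m - 2^{k'})$ to compute $\phi(2^{k'+1}) - \phi(2^{k'})$ explicitly; the defining inequality $2^k + 2^{k-1} \leq m < 2^k + 2^{k+1}$ forces this difference to be positive for $k' < k$ and non-positive for $k' \geq k$, identifying $a = 2^k$ as the maximizer.

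The main obstacle is twofold. First, the $m = 6$ exception must be threaded through the induction: whenever an intermediate subproblem has size $6$, the inductive hypothesis gives the non-standard split $(3, 3)$, and all comparisons must be redone in that case. Second, in boundary cases such as $m = 20$, $a = 9$, neither single-step reduction directly applies, since $u < w$ and $2^{k_{m-a}} < a$ can hold simultaneously, so one needs a refined two-sided expansion that unfolds both $H_2(a)$ and $H_2(m-a)$ before regrouping. This is further complicated by the fact that $\phi$ need not be monotone on either side of its peak—for instance, $\phi(5) = 9379 < \phi(6) = 9380$ for $m = 12$—so all the algebraic bounds must be sharp rather than crude.
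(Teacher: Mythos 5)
Your strategy is genuinely different from the paper's. The paper proves inequality (\ref{2to1ineq}) directly, by a twelve-way case analysis on where $a$ and $b$ sit relative to $2^{k-2},2^{k-1},2^k$, using the shorthand $[x]=xh_2(x)$, $[x,y]=xy/2^{x+y-1}$: in each subcase a chain of intermediate splits is chosen so that the major terms cancel, and the residual minor-term inequality is verified by a direct (if tedious) estimate. You instead try to reduce to power-of-two splits and then compare the quantities $\phi(2^{k'})$ across $k'$. That second phase is a clean idea that the paper does not isolate, and if the first phase worked it would likely be shorter.

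The problem is that the first phase, as written, has a real hole. After WLOG $a\le m/2$, you always have $u=2^{k_a}<a\le m/2\le m-a=w$, so $u<w$ unconditionally; your lower bound $\phi(u)-\phi(a)\ge v\bigl[(2^u-1)w-(2^w-1)u\bigr]$ is therefore always negative (by monotonicity of $(2^x-1)/x$), and the first reduction never applies in the regime you restrict to. The second reduction needs $u'=2^{k_{m-a}}\ge a$; since $u'\le \tfrac{2}{3}(m-a)$ always, this forces $a\le\tfrac{2}{5}m$, so for the entire band $\tfrac{2}{5}m<a\le \tfrac12 m$ neither single-step move is available. This is not a boundary oddity but a fixed positive fraction of the splits, and it is precisely where the optimum lives. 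You do flag the need for a ``refined two-sided expansion that unfolds both $H_2(a)$ and $H_2(m-a)$ before regrouping,'' and that is exactly where the actual work is: the paper's Cases 1.1.2, 1.2.1, 1.2.2, 2.2.1, 2.3.1, 2.3.2 are exactly such two-sided regroupings, each with its own minor-term computation. Asserting that such a regrouping exists is not a proof of it.

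A secondary issue: base cases up to $m\le 8$ are almost certainly insufficient. The paper verifies the recurrence directly for all $m\le 48$ and only runs the inductive argument for $k\ge 5$, because several minor-term estimates (e.g.\ ``since $s=2^{k-1}\ge 16$\dots'' in Cases 1.1.2 and 1.2.1) genuinely require the closed-set size to be large. Any approach that cancels major terms and then bounds minor terms will face the same constraint; starting the induction at $m=9$ would require you to re-derive estimates that provably fail for small $m$, or else to special-case a much larger range by hand. Finally, you would also need to track the $m'=6$ exception through the two-sided expansions whenever $v$, $w$, $v'$, or a recombined side equals $6$, which you acknowledge but which compounds the unverified part.
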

\begin{proof}
We use induction on $m$. It is easy to verify this using the recurrence relation of $h_2(m)$ and the initial values given above for all $m\leq48$. Hence, we only need to prove the theorem for all $k\geq5$ and all $2^{k-1}+2^k\leq m<2^k+2^{k+1}$.

Fix some $k\geq5$ and $2^{k-1}+2^k\leq m<2^k+2^{k+1}$. Assume inductively that the result holds for all smaller values of $k,m$. We need to show that for all positive integers $a\leq b$ satisfying $a+b=m$, we have
\begin{equation}\label{2to1ineqoriginal}
2^kh_2(2^k)+(m-2^k)h_2(m-2^k)+\frac{2^k(m-2^k)}{2^{m-1}}\geq ah_2(a)+bh_2(b)+\frac{ab}{2^{m-1}}.
\end{equation}
Let $c=m-2^k$. Depending on the values of $a,b,c,m$, we will split into twelve subcases. The proofs of all subcases follow the same general recipe, which will be described and demonstrated below. However, the calculations involved are long and somewhat tedious, so we will only show the full proofs for four of them and sketch the rest.  

We begin by introducing some terminologies and notations. For positive integers $x,y,n$, we say that $x$ and $y$ form a \textit{split} of $n$ if $x\leq y$ and $x+y=n$. In this case, by definition we have $nh_2(n)\geq xh_2(x)+yh_2(y)+xy/2^{n-1}$. We say that $x$ and $y$ form an \textit{optimal split} of $n$ if they form a split of $n$ and $nh_2(n)=xh_2(x)+yh_2(y)+xy/2^{n-1}$. We call a term of the form $xh_2(x)$ a \textit{major term}, and denote it by $[x]$. In contrast, a term of the form $xy/2^{n-1}$ (with $n=x+y$) is called a \textit{minor term}, and is denoted by $[x,y]$. In this language, if $x$ and $y$ form a split of $n$, then $[n]\geq[x]+[y]+[x,y]$, with equality if $x$ and $y$ form an optimal split. Our goal (\ref{2to1ineqoriginal}) is equivalent to 
\begin{equation}\label{2to1ineq}
[2^k]+[c]+[2^k,c]\geq[a]+[b]+[a,b]
\end{equation}

The general recipe for proving each subcase consists of two steps. We first focus on the major terms. The aim is to find a series of intermediate splits, often optimal ones, with the help of the induction hypothesis, so that when we substitute them into (\ref{2to1ineq}) the major terms all cancel out. Then we deal with the remaining minor terms. Some computations and estimates are needed here to show that we do have the desired inequalities, but across subcases the calculations all follow a similar pattern. 

We now list all the subcases and prove a few of them in detail. For the remaining ones, we will give the intermediate splits needed to cancel out the major terms, but leave out the minor terms calculations.

\noindent\textbf{Case 1. }$2^{k+1}\leq m<2^k+2^{k+1}$. Then $2^{k}\leq b<2^k+2^{k+1}$. 

\textbf{Case 1.1. } $2^{k-1}+2^k\leq b<2^k+2^{k+1}$. Then $a<2^{k-1}+2^k$ as $a+b=m<2^k+2^{k+1}$. There are two subcases.

\indent\indent\textbf{Case 1.1.1. }$a<2^k$. By induction hypothesis, $2^k$ and $b-2^k$ form an optimal split of $b$. Also $a+(b-2^k)=m-2^k=c$, so $a$ and $b-2^k$ form a split of $c$. Hence, we have  
\begin{align*}
[b]&=[2^k]+[b-2^k]+[2^k,b-2^k],\\
[c]&\geq [a]+[b-2^k]+[a,b-2^k].
\end{align*}
Substitute these into (\ref{2to1ineq}), we see that all the major terms cancel out and it suffices now to show that the remaining minor terms satisfy $[2^k,c]+[a,b-2^k]\geq [2^k,b-2^k]+[a,b]$. Expand and simplify, this is equivalent to showing $2^kc+2^{2^k}a(b-2^k)\geq2^k2^a(b-2^k)+ab$. To simplify notations, let $s=2^k$, $t=a$ and $r=b-2^k$. Then $0\leq t<s$, $s/2\leq r<2s$, $c=a+b-2^k=r+t$, and we need to show $s(r+t)+tr2^s\geq sr2^t+(s+r)t$. Equivalently, we need $(s-t)r+r(t2^s-s2^t)\geq0$. If $t=0$, both sides are 0. Otherwise, it follows from the fact that the sequence $(x_n)_{n=1}^{\infty}$ given by $x_n=2^n/n$ satisfies $x_1=x_2<x_3<x_4<\cdots$.

\indent\indent\textbf{Case 1.1.2. }$2^k\leq a< 2^{k-1}+2^k$. Then $b=m-a<2^{k+1}$. Also, we have $2^{k-1}+2^k\leq c=a+b-2^k=m-2^k<2^{k+1}$. Hence by induction hypothesis, we know optimal splits of $a,b,c$ and $2^k$, which gives
\begin{align*}
[a]&=[2^{k-1}]+[a-2^{k-1}]+[2^{k-1},a-2^{k-1}],\\
[b]&=[2^k]+[b-2^k]+[2^k,b-2^k],\\
[c]&=[2^k]+[c-2^k]+[2^k,c-2^k],\\
[2^k]&=[2^{k-1}]+[2^{k-1}]+[2^{k-1},2^{k-1}].
\end{align*}
Moreover, we have $2^k\leq c-2^{k-1}=(a-2^{k-1})+(b-2^k)<2^{k-1}+2^k$, so by induction hypothesis,
\begin{align*}
[c-2^{k-1}]&=[2^{k-1}]+[c-2^k]+[2^{k-1},c-2^k]\\
&\geq[a-2^{k-1}]+[b-2^k]+[a-2^{k-1},b-2^k].
\end{align*}
Substitute these into (\ref{2to1ineq}), we see that all the major terms cancel out, and it suffices to show that the remaining minor terms satisfy 
\begin{align*}
&\phantom{==}[2^k,c]+[c-2^k,2^k]+[2^{k-1},2^{k-1}]+[a-2^{k-1},b-2^k]\\
&\geq [a,b]+[b-2^k,2^k]+[a-2^{k-1},2^{k-1}]+[2^{k-1},c-2^k].
\end{align*}
Expand and simplify, this is equivalent to
\begin{align*}
&\phantom{==}c2^k+(c-2^k)2^k2^{2^k}+2^{2k-2}2^c+(a-2^{k-1})(b-2^k)2^{3\cdot2^{k-1}}\\
&\geq ab+(b-2^k)2^k2^a+(a-2^{k-1})2^{k-1}2^b+2^{k-1}(c-2^k)2^{3\cdot2^{k-1}}.
\end{align*}
We perform similar substitutions $s=2^{k-1}$, $t=a-2^k$ and $r=b-2^{k-1}-2^k$ to simplify notations. Note that $0\leq r,t<s$ and $c=3s+t+r$. Then it suffices to show
\begin{align*}
&\phantom{==}2s(3s+t+r)+2s(s+t+r)2^{2s}+s^22^{3s+t+r}+(s+t)(s+r)2^{3s}\\
&\geq (2s+t)(3s+r)+2s(s+r)2^{2s+t}+s(s+t)2^{3s+r}+s(s+t+r)2^{3s}.
\end{align*}
Expand and simplify, this is equivalent to
$$(s2^{2s+r}-(2s+r)2^s)(s2^{s+t}-(s+t)2^s)+2^{2s}r(t2^s-s2^t+s-t)\geq st+rt.$$
Again, each term on the left hand side is non-negative as the sequence $(x_n)=(2^n/n)$ is decreasing. If $t=0$, then both sides are 0. Otherwise, since $s=2^{k-1}\geq16$, the left hand side is at least $s2^s(2^s-3)\cdot1+0\geq2s^2\geq st+rt$, which proves this subcase. 

\textbf{Case 1.2. }$2^k\leq b<2^{k-1}+2^k$. Then we have $2^{k-1}\leq m-b=a\leq b<2^{k-1}+2^k$. Split into two subcases.

\indent\indent\textbf{Case 1.2.1. }$2^k\leq a<2^{k-1}+2^k$. By induction hypothesis, we have 
\begin{align*}
[a]&=[2^{k-1}]+[a-2^{k-1}]+[2^{k-1},a-2^{k-1}],\\
[b]&=[2^{k-1}]+[b-2^{k-1}]+[2^{k-1},b-2^{k-1}],\\
[2^k]&=[2^{k-1}]+[2^{k-1}]+[2^{k-1},2^{k-1}].
\end{align*}
Moreover, $(a-2^{k-1})+(b-2^{k-1})=m-2^k=c$, so $$[c]\geq[a-2^{k-1}]+[b-2^{k-1}]+[a-2^{k-1},b-2^{k-1}].$$
Substitute these into (\ref{2to1ineq}) and simplify, we see that it suffices to show $$2^c2^{2k-2}+2^kc+(a-2^{k-1})(b-2^{k-1})2^{2^k}\geq ab+(a-2^{k-1})2^{k-1}2^b+(b-2^{k-1})2^{k-1}2^a.$$
To simplify notations, let $s=2^{k-1}, t=a-s, r=b-s$. Then $s\leq r,t<2s$ and $c=t+r$. It now suffices to show $s^22^{t+r}+2s(t+r)+tr2^{2s}\geq(s+t)(s+r)+st2^{s+r}+sr2^{s+t}$. This is equivalent to $(2^ts-2^st)(2^rs-2^sr)\geq(t-s)(r-s)$. Since the function $(2^x+1)/x$ is increasing on $[2,\infty)$ and $t\geq s=2^{k-1}\geq16$, we have $(2^t+1)/t\geq(2^s+1)/s$. Hence $2^ts-2^st\geq t-s$. Similarly, $2^rs-2^sr\geq r-s$, which finishes the proof of this subcase. 

\indent\indent\textbf{Case 1.2.2. }$2^{k-1}\leq a<2^k$. Then $2^k\leq m-2^k=c=a+b-2^k<2^{k-1}+2^k$. So by induction hypothesis, we have
\begin{align*}
[b]&=[2^{k-1}]+[b-2^{k-1}]+[2^{k-1},b-2^{k-1}],\\
[c]&=[2^{k-1}]+[c-2^{k-1}]+[2^{k-1},c-2^{k-1}],\\
[2^k]&=[2^{k-1}]+[2^{k-1}]+[2^{k-1},2^{k-1}].
\end{align*}
Moreover, $2^{k-1}+2^{k}\leq c+2^{k-1}<2^{k+1}$ and $a+(b-2^{k-1})=c+2^{k-1}$. So 
\begin{align*}
[c+2^{k-1}]&=[2^k]+[c-2^{k-1}]+[2^k,c-2^{k-1}]\\
&\geq[a]+[b-2^{k-1}]+[a,b-2^{k-1}].
\end{align*}
Substitute these into (\ref{2to1ineq}), we see that it suffices to prove 
\begin{align*}
&\phantom{==}a(b-2^{k-1})2^{2^{k-1}}+(c-2^{k-1})2^{k-1}2^{2^k}+c2^k\\
&\geq(c-2^{k-1})2^k2^{2^{k-1}}+(b-2^{k-1})2^{k-1}2^a+ab.
\end{align*}
Let $s=2^{k-1},t=a-s,r=b-2s$. Then $c=s+t+r$ and $0\leq t,r<s$. Substitute and simplify, we see that it suffices to show $$s(t+r)2^{2s}+(s+t)(s+r)2^s+(s-t)r\geq s(s+r)2^{s+t}+2s(t+r)2^s.$$
Note that $(s-t)r\geq 0$, so after some rearrangement it suffices to show $$s(2^s-2)(r(2^s-2^t)+(t2^s-s2^t))+(s+r)2^{-s}((s+t)2^{2s}-2s2^{s+t})\geq0,$$
which is true as each term is non-negative, proving this subcase. 


\noindent\textbf{Case 2. }$2^{k-1}+2^k\leq m<2^{k+1}$. Then $2^{k-2}+2^{k-1}\leq b<2^{k+1}$. 

\textbf{Case 2.1. }$2^{k-1}+2^k\leq b<2^{k+1}$. Then $1\leq a<2^{k-1}$. Use 
\begin{align*}
[b]&=[2^k]+[b-2^k]+[2^k,b-2^k],\\
[c]&\geq[a]+[b-2^k]+[a,b-2^k].
\end{align*}

\textbf{Case 2.2. }$2^{k-2}+2^k\leq b<2^{k-1}+2^k$. Then $1\leq a<2^{k-2}+2^{k-1}$.

\indent\indent\textbf{Case 2.2.1. }$2^{k-1}\leq a<2^{k-2}+2^{k-1}$. Use
\begin{align*}
[a]&=[2^{k-2}]+[a-2^{k-2}]+[2^{k-2},a-2^{k-2}],\\
[b]&=[2^{k-1}]+[b-2^{k-1}]+[2^{k-1},b-2^{k-1}],\\
[b-2^{k-1}]&=[2^{k-1}]+[b-2^k]+[2^{k-1},b-2^k],\\
[c]&=[2^{k-1}]+[c-2^{k-1}]+[2^{k-1},c-2^{k-1}],\\
[2^k]&=[2^{k-1}]+[2^{k-1}]+[2^{k-1},2^{k-1}],\\
[2^{k-1}]&=[2^{k-2}]+[2^{k-2}]+[2^{k-2},2^{k-2}],\\
[c-2^{k-2}]&=[2^{k-2}]+[c-2^{k-1}]+[2^{k-2},c-2^{k-1}]\\
&\geq[a-2^{k-2}]+[b-2^k]+[a-2^{k-2},b-2^k].
\end{align*}

\indent\indent\textbf{Case 2.2.2. }$1\leq a<2^{k-1}$. Use
\begin{align*}
[b]&=[2^{k-1}]+[b-2^{k-1}]+[2^{k-1},b-2^{k-1}],\\
[b-2^{k-1}]&=[2^{k-1}]+[b-2^k]+[2^{k-1},b-2^k],\\
[2^k]&=[2^{k-1}]+[2^{k-1}]+[2^{k-1},2^{k-1}],\\
[c]&\geq[a]+[b-2^k]+[a,b-2^k].
\end{align*}

\textbf{Case 2.3. }$2^k\leq b<2^{k-2}+2^k$. Then $2^{k-2}\leq a<2^k$. 

\indent\indent\textbf{Case 2.3.1. }$2^{k-2}+2^{k-1}\leq a<2^k$. Use
\begin{align*}
[a]&=[2^{k-1}]+[a-2^{k-1}]+[2^{k-1},a-2^{k-1}],\\
[b]&=[2^{k-1}]+[b-2^{k-1}]+[2^{k-1},b-2^{k-1}],\\
[b-2^{k-1}]&=[2^{k-2}]+[b-3\cdot2^{k-2}]+[2^{k-2},b-3\cdot2^{k-2}],\\
[c]&=[2^{k-1}]+[c-2^{k-1}]+[2^{k-1},c-2^{k-1}],\\
[2^k]&=[2^{k-1}]+[2^{k-1}]+[2^{k-1},2^{k-1}],\\
[2^{k-1}]&=[2^{k-2}]+[2^{k-2}]+[2^{k-1},2^{k-2}],\\
[c-2^{k-2}]&=[2^{k-2}]+[c-2^{k-1}]+[2^{k-2},c-2^{k-1}]\\
&\geq[a-2^{k-1}]+[b-3\cdot2^{k-2}]+[a-2^{k-1},b-3\cdot2^{k-2}].
\end{align*}

\indent\indent\textbf{Case 2.3.2. }$2^{k-1}\leq a<2^{k-2}+2^{k-1}$. Use \begin{align*}
[a]&=[2^{k-2}]+[a-2^{k-2}]+[2^{k-2},a-2^{k-2}],\\
[b]&=[2^{k-1}]+[b-2^{k-1}]+[2^{k-1},b-2^{k-1}],\\
[b-2^{k-1}]&=[2^{k-2}]+[b-3\cdot2^{k-2}]+[2^{k-2},b-3\cdot2^{k-2}],\\
[2^k]&=[2^{k-1}]+[2^{k-1}]+[2^{k-1},2^{k-1}],\\
[2^{k-1}]&=[2^{k-2}]+[2^{k-2}]+[2^{k-1},2^{k-2}],\\
[c]&\geq[a-2^{k-2}]+[b-3\cdot2^{k-2}]+[a-2^{k-2},b-3\cdot2^{k-2}].
\end{align*}

\indent\indent\textbf{Case 2.3.3. }$2^{k-2}\leq a<2^{k-1}$. Use
\begin{align*}
[b]&=[2^{k-1}]+[b-2^{k-1}]+[2^{k-1},b-2^{k-1}],\\
[2^k]&=[2^{k-1}]+[2^{k-1}]+[2^{k-1},2^{k-1}],\\
[c+2^{k-1}]&=[2^{k-1}]+[c]+[c,2^{k-1}]\\
&\geq[a]+[b-2^{k-1}]+[a,b-2^{k-1}].
\end{align*}

\textbf{Case 2.4. }$2^{k-2}+2^{k-1}\leq b<2^k$. Then $2^{k-1}\leq a<2^k$.

\indent\indent\textbf{Case 2.4.1. }$2^{k-2}+2^{k-1}\leq a<2^k$. Use \begin{align*}
[a]&=[2^{k-1}]+[a-2^{k-1}]+[2^{k-1},a-2^{k-1}],\\
[b]&=[2^{k-1}]+[b-2^{k-1}]+[2^{k-1},b-2^{k-1}],\\
[2^k]&=[2^{k-1}]+[2^{k-1}]+[2^{k-1},2^{k-1}],\\
[c]&\geq[a-2^{k-1}]+[b-2^{k-1}]+[a-2^{k-1},b-2^{k-1}].
\end{align*}

\indent\indent\textbf{Case 2.4.2. }$2^{k-1}\leq a<2^{k-2}+2^{k-1}$. Use
\begin{align*}
[a]&=[2^{k-2}]+[a-2^{k-2}]+[2^{k-2},a-2^{k-2}],\\
[b]&=[2^{k-1}]+[b-2^{k-1}]+[2^{k-1},b-2^{k-1}],\\
[2^k]&=[2^{k-1}]+[2^{k-1}]+[2^{k-1},2^{k-1}],\\
[c]&=[2^{k-2}]+[c-2^{k-2}]+[2^{k-2},c-2^{k-2}],\\
[c+2^{k-2}]&=[2^{k-1}]+[c-2^{k-2}]+[2^{k-1},c-2^{k-2}]\\
&\geq[a-2^{k-2}]+[b-2^{k-1}]+[a-2^{k-2},b-2^{k-1}].\qedhere
\end{align*}
\end{proof}

Theorem \ref{optimalsplit} proves the first part of Theorem \ref{thm:lowerbds} with the exception of $m=6$. To prove the second part about the limiting behaviour of $H_{2}(m)$, we need the following lemma. Let $g(k)=g(k,2)=\sum_{j=1}^k2^{j-1}/2^{2^{j}}$ and $\lambda=\lambda(2)=\sum_{j=1}^{\infty}2^{j-1}/2^{2^{j}}$.

\begin{lemma}\label{fleqg}
For all positive integers $m$ except $3,6$ and $7$, we have $$h_2(m)\leq g(\floor{\log_2m}),$$
with equality if and only if $m=5$ or $m$ is a power of $2$. In particular, $\limsup_{m\to\infty}h_2(m)=\lambda$.
\end{lemma}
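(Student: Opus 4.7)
The plan is to induct on $m$, using the exact recurrence for $h_2(m)$ given in Theorem \ref{optimalsplit} together with the closed form $h_2(2^k) = g(k)$ supplied by Lemma \ref{2knice}. Specifically, for $m\neq 6$ the theorem writes $m = 2^{k^*} + (m - 2^{k^*})$ with $k^*$ determined by $2^{k^*}+2^{k^*-1}\leq m<2^{k^*}+2^{k^*+1}$, and one half of the split is always a power of $2$, whose $h_2$-value is known exactly.

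For the base cases I would verify $1\leq m\leq 15$ with $m\notin\{3,6,7\}$ by direct computation from the recurrence and the tabulated values of $H_2$. Equality shows up only at $m\in\{1,2,4,5,8\}$; the four delicate values are $m\in\{10,11,14,15\}$, whose optimal splits route through $h_2(6)$ or $h_2(7)$, values that exceed $g(2)$. For these I would verify the bound by direct calculation, observing that the weight $(m-2^K)/m<1/2$ of the ``bad'' summand is more than compensated by the $2^K g(K)/m$ contribution. For the inductive step with $m\geq 16$, set $K=\lfloor \log_2 m\rfloor\geq 4$. Theorem \ref{optimalsplit} splits $m=a+b$ with either $a=2^K$ (when $m\geq 3\cdot 2^{K-1}$, call this Case~A) or $a=2^{K-1}$ (when $m<3\cdot 2^{K-1}$, Case~B); in both cases the other summand lies in $[2^{K-1},2^K)\subseteq[8,\infty)$, which is crucially disjoint from $\{3,6,7\}$, so the induction hypothesis $h_2(b)\leq g(K-1)$ applies. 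Substituting $h_2(a)=g(K)$ or $g(K-1)$ together with this inductive bound into the recurrence and exploiting the identity $g(K)-g(K-1)=2^{K-1}/2^{2^K}$, the target inequality $h_2(m)\leq g(K)$ collapses after cancellation to a purely numerical inequality of the form $m\cdot 2^{m-1}\geq c\cdot 2^{2^K}$ with $c\in\{2,\,m-2^{K-1}\}$, which is easily checked since $m\geq 3\cdot 2^{K-1}\geq 2^K+8$.

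The equality characterisation falls out of this analysis: in Case~A the reduced inequality is strict (since $m>2^K+1$ throughout the inductive regime), while in Case~B equality requires $m=2^K$ together with equality in the inductive hypothesis at $b=2^{K-1}$, which is supplied since $2^{K-1}$ is itself a power of $2$. Combined with the base-case inspection this shows equality occurs precisely when $m=5$ or $m$ is a power of $2$. The $\limsup$ assertion is then immediate: the bound $h_2(m)\leq g(\lfloor \log_2 m\rfloor)\to\lambda$ (valid for all but finitely many $m$) gives $\limsup_{m\to\infty} h_2(m)\leq\lambda$, while the subsequence $h_2(2^k)=g(k)\to\lambda$ gives the matching lower bound. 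The main obstacle I anticipate is the bookkeeping around the exceptional values: the recurrence occasionally routes through $h_2(6)$ or $h_2(7)$, which forces the base-case window to be extended up to $m=15$ before clean induction takes over at $m=16$, and also forces a careful direct check at $m\in\{10,11,14,15\}$ where one must verify that the minor-term contributions do not blow past $g(3)$ despite feeding off of $h_2(6)$ or $h_2(7)$.
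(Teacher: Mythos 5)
Your proof is correct and follows essentially the same strategy as the paper's: induction on $m$ using the recurrence from Theorem~\ref{optimalsplit}, direct verification of a base-case window, and the observation that one side of the optimal split is a power of $2$ whose $h_2$-value equals $g(\cdot)$, with the two cases distinguished by whether $m$ lies above or below $3\cdot 2^{K-1}$. The paper verifies base cases through $m\leq 23$ and inducts from $24$ while you verify through $m\leq 15$ and induct from $16$ --- both work since in either regime the non-power-of-two summand is at least $8$ and hence avoids $\{3,6,7\}$; the only slip is that the Case~A reduction cancels the factor $c$ from both sides and so reads $2^{m-1}\geq 2\cdot 2^{2^K}$ (equivalently $m\geq 2^K+2$) rather than $m\cdot 2^{m-1}\geq 2\cdot 2^{2^K}$, but the bound $m\geq 3\cdot 2^{K-1}\geq 2^K+8$ that you invoke certifies the correct inequality regardless.
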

\begin{proof}
We will use induction on $m$. Using the recurrence relation given by Theorem \ref{optimalsplit} and the initial values of $h_2(m)$ given in the table above, it is easy to verify the claim for all $1\leq m\leq23$. Now assume $m\geq 24$ and the results holds all smaller $m$. Let $k$ be the unique integer such that $2^{k-1}+2^k\leq m<2^k+2^{k+1}$, and note that $k\geq4$. Let $c=m-2^k\geq8$. By Theorem \ref{optimalsplit}, it suffices to show that
\begin{equation*}
2^kh_2(2^k)+ch_2(c)+\frac{2^kc}{2^{m-1}}\leq mg(\floor{\log_2m}).
\end{equation*} 

\noindent\textbf{Case 1.} $2^{k-1}+2^k\leq m< 2^{k+1}$. Then $\floor{\log_2m}=k$ and $\floor{\log_2c}=k-1$. By induction hypothesis, we have
\begin{align*}
2^kh_2(2^k)+ch_2(c)+\frac{2^kc}{2^{m-1}}&\leq2^kg(k)+cg(k-1)+\frac{2^kc}{2^{m-1}}\\
&=mg(k-1)+2^k\frac{2^{k-1}}{2^{2^k}}+\frac{2^kc}{2^{m-1}}\\
&\leq mg(k-1)+2^k\frac{2^{k-1}}{2^{2^k}}+c\frac{2^{k-1}}{2^{2^k}}\\
&=mg(k).
\end{align*}
Note that the first inequality is strict by induction hypothesis unless $c=2^{k-1}$. But in that case the second inequality is strict.

\noindent\textbf{Case 2.} $2^{k+1}\leq m<2^k+2^{k+1}$. Then $\floor{\log_2m}=k+1$ and $\floor{\log_2c}=k$. By induction hypothesis, we have
\begin{align*}
2^kh_2(2^k)+ch_2(c)+\frac{2^kc}{2^{m-1}}&\leq2^kg(k)+cg(k)+\frac{2^kc}{2^{m-1}}\\
&=mg(k)+\frac{2^kc}{2^{m-1}}\\
&\leq mg(k)+m\frac{2^k}{2^{2^{k+1}}}\\
&=mg(k+1).
\end{align*}
Note that the first inequality is strict unless $c=2^k$. In that case both inequalities are tight and $m=2^{k+1}$ is a power of 2. 

Therefore, $\limsup_{m\to\infty}h_2(m)\leq\lim_{k\to\infty}g(k)=\lambda$. Then, as $h_2(2^k)=g(k)$ for every $k\geq 1$, we must have $\limsup_{m\to\infty}h_2(m)=\lambda$.
\end{proof}

We can now put everything together and prove Theorem \ref{thm:lowerbds}.
\begin{proof}[Proof of Theorem \ref{thm:lowerbds}]
The first part of Theorem \ref{thm:lowerbds} when $m\not=6$ follows from $h_2(m)=H_2(m)/m2^{m-1}$ and Theorem \ref{optimalsplit}, while for $m=6$, it is easy to verify that the optimal split is $6=3+3$.

For the second part, Lemma \ref{fleqg} gives that $\limsup_{m\to\infty}h_2(m)=\lambda=\lambda(2)$, while Lemma \ref{tildeasymptotic} implies $\liminf_{m\to\infty}h_2(m)\geq\liminf_{m\to\infty}\overline{h}_2(m,2)=\lambda(2)$. Hence, $\lim_{m\to\infty}H_2(m)/m2^{m-1}=\lim_{m\to\infty}h_2(m)$ exists and is equal to $\lambda(2)$.
\end{proof}

\section{Further results and open questions}\label{closingremarks}
By suitably modifying Section \ref{uppersection} and Section \ref{lowersection}, we can also prove the following generalisation of Theorem \ref{thm:upperbds} and Theorem \ref{thm:lowerbds} for all $\alpha\geq 2$. 
\begin{thm}\label{generalupper}
For every $m\geq 3$ and $\alpha\geq 2$, we have $$H(m,\alpha)\leq\frac{\alpha^6+\alpha^4+\alpha+1}{\alpha^6(\alpha^2-1)}\cdot\frac{m\alpha^m}2.$$    
\end{thm}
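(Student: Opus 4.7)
The plan is to generalize the proof of Theorem \ref{thm:upperbds} from $\alpha = 2$ to all $\alpha \geq 2$, replacing base-$2$ arithmetic with base-$\alpha$ arithmetic throughout. As a sanity check, at $\alpha = 2$ the constant $\frac{\alpha^6+\alpha^4+\alpha+1}{\alpha^6(\alpha^2-1)}$ evaluates to $\frac{83}{192}$. Moreover, it decomposes as
$$\frac{\alpha^6+\alpha^4+\alpha+1}{\alpha^6(\alpha^2-1)} = \frac{\alpha^2+1}{\alpha^2(\alpha^2-1)} + \frac{1}{\alpha^6(\alpha-1)},$$
where the first summand is the base-case value corresponding to ``$\tfrac{5}{12}$'' and the second is a geometric tail corresponding to ``$\tfrac{1}{64}$'' in the original argument.

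The first task is to extend Proposition \ref{prop:sizeatleast2} to $\alpha \geq 2$, i.e.\ to reduce to TCMs whose maximal closed sets all have size at least $2$. The swaps used there remain weight-increasing for any $\alpha > 1$: Lemma \ref{lemma:unique-choice} gives a weight change of $(\alpha-1)(\alpha^{m_{xy}}-\alpha^{m_{xz}-1}) \geq 0$, and Lemma \ref{prop:swap} gives $\alpha^{m_{xy}-2}(\alpha-1)^2 > 0$. All the numerical estimates in the case analysis of Proposition \ref{prop:sizeatleast2} involve telescoping geometric sums of the form $\sum_{j=3}^r \alpha^{m-j} = (\alpha^{m-2}-\alpha^{m-r})/(\alpha-1)$, and I would verify case by case that every strict-improvement inequality continues to hold for all $\alpha \geq 2$. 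Lemma \ref{vertexcontribution} rests on Karamata's inequality applied to the convex function $\alpha^x$ and transfers verbatim.

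Next I would replay the decomposition argument of Theorem \ref{thm:upperbds}. Writing the maximal closed sets as $A_1,\ldots,A_k$ with sizes $a_1 \leq \cdots \leq a_k \geq 2$, edges inside $A_i$ contribute at most $H(a_i,\alpha)\alpha^{m-a_i}$; by the generalized Lemma \ref{vertexcontribution}, the total inter-closed-set contribution, after the same majorization-plus-Karamata argument, is bounded by $\alpha^{m-1} \cdot c(\alpha) \sum_{i=1}^k a_i/\alpha^{a_i}$, where $c(\alpha) = \tfrac{2}{\alpha^2-1}$ replaces the $\tfrac{2}{3}$ of the original proof via $\sum_{j \geq 1} \alpha^{-2j} = 1/(\alpha^2-1)$. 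Dividing through by $m\alpha^m/2$ and setting $h(m,\alpha) = 2H(m,\alpha)/(m\alpha^m)$ yields the recursive bound
$$h(m,\alpha) \leq \max\left\{\sum_{i=1}^k \frac{a_i}{m}\left(h(a_i,\alpha) + \frac{c(\alpha)}{\alpha^{a_i}}\right) : a_i \geq 2,\ \sum_i a_i = m\right\}.$$

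The proof concludes by induction on $m$, exactly mirroring the $\varepsilon(m)$ argument of Theorem \ref{thm:upperbds}: one shows inductively that $h(m,\alpha) + c(\alpha)/\alpha^m \leq K_0(\alpha) + \varepsilon(m,\alpha)$, where $K_0(\alpha) = h(2,\alpha) + c(\alpha)/\alpha^2 = \tfrac{\alpha^2+1}{\alpha^2(\alpha^2-1)}$ is the base-case maximum (achieved at $m=2$) and $\varepsilon(m,\alpha) = \sum_{j=7}^m \alpha^{-j}$ is a telescoping tail. Letting $m \to \infty$ and using $\sum_{j \geq 7} \alpha^{-j} = 1/(\alpha^6(\alpha-1))$ yields exactly the stated constant. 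The main obstacle will be the uniform-in-$\alpha$ case analysis in the extended Proposition \ref{prop:sizeatleast2}: substantial numerical juggling is needed to verify that each of its many subcases produces a strict improvement for every $\alpha \geq 2$. A secondary point is checking that the threshold $m_0 = 7$ genuinely works uniformly in $\alpha$, i.e.\ that $h(m,\alpha) + c(\alpha)/\alpha^m$ is maximized at $m=2$ over $m \in \{2,\ldots,6\}$ for every $\alpha \geq 2$, which should follow from a direct computation of the small-$m$ values of $H(m,\alpha)$.
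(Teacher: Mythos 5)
Your proposal is correct and matches the approach the paper intends: the paper does not give a detailed proof of Theorem \ref{generalupper}, stating only that it follows ``by suitably modifying Section \ref{uppersection}'', and your plan spells out exactly that modification, with the right generalised constants ($c(\alpha)=\tfrac{2}{\alpha^2-1}$ from the geometric series $\sum_{j\geq1}\alpha^{-2j}$, base value $K_0(\alpha)=\tfrac{\alpha^2+1}{\alpha^2(\alpha^2-1)}$ attained at $m=2$, and the tail $\sum_{j\geq7}\alpha^{-j}=\tfrac1{\alpha^6(\alpha-1)}$). One caveat worth flagging beyond the two concerns you already raise: the extremal TCM structure for small $m$ is $\alpha$-dependent (for instance at $m=6$ the optimal split switches between $3+3$ and $4+2$ at $\alpha=\sqrt2+1$), so the base-case verification that $h(m,\alpha)+c(\alpha)/\alpha^m\leq K_0(\alpha)$ for $m\in\{2,\ldots,6\}$ must be checked against all competing extremal profiles, not a single one; one also needs $c(\alpha)\leq1$, i.e.\ $\alpha\geq\sqrt3$, in the inductive step, which is where the hypothesis $\alpha\geq2$ is genuinely used.
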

\begin{thm}\label{generallower}
For every $m\geq 3$ except 6, let $k=k(m)$ be the unique integer such that $2^k+2^{k-1}\leq m<2^k+2^{k+1}$. For $m=6$, let $k=k(6)=3$ if $2\leq\alpha<\sqrt{2}+1$, and let $k=k(6)=4$ if $\alpha\geq\sqrt2+1$. Then for all $\alpha\geq 2$, we have $$H_{2}(m,\alpha)=H_{2}(2^k,\alpha)\alpha^{m-2^k}+H_{2}(m-2^k,\alpha)\alpha^{2^k}+2^k(m-2^k).$$
Moreover, if $\lambda(\alpha)=\sum_{j=1}^{\infty}2^{j-1}/\alpha^{2^{j}}$, then $\lim_{m\to\infty}\frac{2H_{2}(m,\alpha)}{m\alpha^m}=\lambda(\alpha)$.
\end{thm}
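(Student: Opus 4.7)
The plan is to reprise the strategy of Section~\ref{h2exact}, replacing the fixed base $\alpha=2$ by a generic $\alpha\geq 2$. The lower bound direction of the recurrence is immediate from the 2-recursive construction $\mathcal{G}(m)$ of Section~\ref{lowerboundconstruction}, which already witnesses
$H_2(m,\alpha)\geq H_2(2^k,\alpha)\alpha^{m-2^k}+H_2(m-2^k,\alpha)\alpha^{2^k}+2^k(m-2^k)$
for $k=k(m)$. The matching upper bound is the substantive content.

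First I would generalise Theorem~\ref{optimalsplit}. The proof in Section~\ref{h2exact} is an induction on $m$ that, for each candidate split $m=a+b$ arising in Lemma~\ref{hrbrecurrence}, uses intermediate optimal splits of $a,b,c,2^k$ to rewrite the desired inequality so that all ``major'' terms $[x]:=xh_2(x,\alpha)$ cancel, leaving only ``minor'' terms $[x,y]:=xy/\alpha^{x+y-1}$. Redefining $[x]$ and $[x,y]$ with $\alpha$ in place of $2$, the same twelve-case combinatorial skeleton applies verbatim: the intermediate split identities listed in Cases 1.1--2.4.2 of that proof are precisely the ones the construction $\mathcal{G}(m)$ supplies. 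What must be re-verified is the residual minor-term inequality in each case. The algebraic engine in the original was that $(2^n/n)_{n\geq 1}$ is non-decreasing; here the replacement $(\alpha^n/n)_{n\geq 1}$ is non-decreasing because $(n+1)/n\leq 2\leq \alpha$ for all $n\geq 1$, with strict monotonicity for $n\geq 2$. Each minor-term inequality becomes a non-negative sum of products of factors of the form $s\alpha^r-r\alpha^s$ (with $s\leq r$), exactly as in Section~\ref{h2exact}.

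Second, the $m=6$ exception is handled by direct computation. Using the initial values $H_2(2,\alpha)=1$, $H_2(3,\alpha)=\alpha+2$, $H_2(4,\alpha)=2\alpha^2+4$, $H_2(5,\alpha)=2\alpha^3+2\alpha^2+6$ (all immediate from Lemma~\ref{hrbrecurrence}), one compares the three splits $6=1+5=2+4=3+3$. The difference of the $3+3$ and $2+4$ weights equals
\[
(2\alpha^4+4\alpha^3+9)-(3\alpha^4+4\alpha^2+8)=-\bigl(\alpha^4-4\alpha^3+4\alpha^2-1\bigr)=-(\alpha^2-2\alpha-1)(\alpha^2-2\alpha+1),
\]
whose sign changes at $\alpha^2-2\alpha-1=0$, i.e.\ $\alpha=1+\sqrt 2$. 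This pinpoints the critical threshold; one also checks that the $1+5$ split is never optimal in the range $\alpha\geq 2$.

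Finally, for the limit I would generalise Lemma~\ref{fleqg}. By induction on $m$, using the two cases $2^{k-1}+2^k\leq m<2^{k+1}$ and $2^{k+1}\leq m<2^k+2^{k+1}$ and the just-proved recurrence, one shows $h_2(m,\alpha)\leq g(\lfloor \log_2 m\rfloor,\alpha)$ for all but finitely many $m$, where $g(k,\alpha)=\sum_{j=1}^k 2^{j-1}/\alpha^{2^j}$. This gives $\limsup_{m\to\infty}2H_2(m,\alpha)/(m\alpha^m)\leq \lambda(\alpha)$, matching the $\liminf$ bound $\lambda(\alpha)$ supplied by Lemma~\ref{tildeasymptotic} (proved there for all $\alpha>1$). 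Hence the limit equals $\lambda(\alpha)$.

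The main obstacle will be the uniform management of the twelve-case minor-term induction: in the original argument several estimates invoked $s=2^{k-1}\geq 16$ to absorb lower-order terms into a fixed constant, and analogous thresholds must now hold uniformly for all $\alpha\geq 2$. In practice this will force one to verify a somewhat larger initial range of $m$ by direct computation than the $m\leq 48$ range used for $\alpha=2$, and to keep track of the $\alpha$-dependence in the discarded terms. Beyond this bookkeeping, however, no new idea is needed: the monotonicity of $(\alpha^n/n)$ and the optimal-split substitution scheme of Section~\ref{h2exact} propagate through the argument essentially unchanged.
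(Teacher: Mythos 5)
Your proposal is correct and is, in effect, the proof the paper leaves to the reader: Section~\ref{closingremarks} states Theorem~\ref{generallower} without proof, saying only that it follows by "suitably modifying" Sections~\ref{uppersection} and \ref{lowersection}, and your plan (rerun the optimal-split induction of Theorem~\ref{optimalsplit} with $2^n/n$ replaced by $\alpha^n/n$, which stays non-decreasing since $\alpha\geq2\geq(n+1)/n$, then transplant the $\limsup$ argument of Lemma~\ref{fleqg} and combine with the $\liminf$ bound from Lemma~\ref{tildeasymptotic}) is exactly that modification. One useful by-product of your $m=6$ computation is that it exposes a notational abuse in the stated theorem: the intended optimal split of $6$ is $3+3$ when $2\leq\alpha<1+\sqrt2$ and $2+4$ when $\alpha\geq1+\sqrt2$, so ``$k(6)=3$'' and ``$k(6)=4$'' must be read as the size of the first part of the split rather than as exponents, since substituting $2^{k(6)}$ into the displayed recurrence would be meaningless.
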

We also conjecture that the following generalised version of Conjecture \ref{conj} holds as well. 
\begin{conj}
$H(m,\alpha)=H_{2}(m,\alpha)$ for all $m\geq 3$ and $\alpha\geq 2$. In particular,
$$\lim_{m\to\infty}\frac{2H(m,\alpha)}{m\alpha^m}=\lim_{m\to\infty}\frac{2H_{2}(m,\alpha)}{m\alpha^m}=\lambda(\alpha).$$
\end{conj}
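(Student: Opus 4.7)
My plan is to prove the conjecture by induction on $m$, with base cases verified by exhaustive computation of $H(m,\alpha)$ for small $m$. The reverse inequality $H_2(m,\alpha) \leq H(m,\alpha)$ is immediate since every 2-recursive TCM is a TCM, so the content is in showing $H(m,\alpha) \leq H_2(m,\alpha)$. Fix an extremal TCM $\mathcal{G}$ achieving $w(\mathcal{G},\alpha) = H(m,\alpha)$. First I would extend Proposition \ref{prop:sizeatleast2} from $\alpha = 2$ to all $\alpha \geq 2$; each swap lemma in Section \ref{atleast2} depends only on convexity of $x \mapsto \alpha^x$ and on $a/\alpha^a$ being decreasing for $a \geq 2$, so the argument transfers with only numerical adjustments (this is essentially already needed for Theorem \ref{generalupper}). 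We may therefore assume every maximal closed set of $\mathcal{G}$ has size at least $2$.

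Let $A_1, \ldots, A_k$ be the maximal closed sets of $\mathcal{G}$, with $|A_i| = a_i \geq 2$. The key reduction is to the case $k = 2$. When $k = 2$, the observation that cross-edges between $A_1$ and $A_2$ have multiplicity $0$ and that every internal edge of $A_i$ has its multiplicity shifted by exactly $m - a_i$ yields
$$w(\mathcal{G},\alpha) = w(\mathcal{G}[A_1],\alpha)\alpha^{a_2} + w(\mathcal{G}[A_2],\alpha)\alpha^{a_1} + a_1 a_2.$$
Extremality of $\mathcal{G}$ forces $\mathcal{G}[A_i]$ to be extremal among TCMs on $A_i$, so the inductive hypothesis gives $w(\mathcal{G}[A_i],\alpha) = H_2(a_i,\alpha)$, and then Lemma \ref{hrbrecurrence} implies $w(\mathcal{G},\alpha) \leq H_2(m,\alpha)$, as required.

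The main obstacle is handling $k \geq 3$. The most direct route is a merging argument: pick a pair $A_i, A_j$ (for instance one minimising $a_i + a_j$) and construct $\mathcal{G}'$ from $\mathcal{G}$ by redirecting, for every triple $\{u,v,w\}$ with $u \in A_i$, $v \in A_j$ and $w \in A_\ell$ ($\ell \neq i,j$), the chosen edge to $uv$. This forces $A_i \cup A_j$ to be closed in $\mathcal{G}'$, and iterating would reduce the number of maximal closed sets down to $2$. The difficulty is proving $w(\mathcal{G}',\alpha) \geq w(\mathcal{G},\alpha)$: by Lemma \ref{lemma:unique-choice} each individual swap strictly decreases $w$, so the argument must exploit the simultaneous aggregate gain on the edges $uv \in A_i \times A_j$ against the losses on the reassigned edges into $A_\ell$. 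An alternative route is to iterate Lemma \ref{hrbrecurrence} to obtain
$$H_2(m,\alpha) \;\geq\; \sum_{i=1}^{k} H_2(a_i,\alpha)\,\alpha^{m-a_i} + \sum_{i<j}\alpha^{s_{i-1}} a_i a_j,$$
with $s_i = a_1 + \cdots + a_i$, and then bound the cross-edge contribution $W_{\mathrm{cross}} := \sum_{i<j}\sum_{u \in A_i,\,v \in A_j}\alpha^{m_{uv}}$ from above by $\sum_{i<j}\alpha^{s_{i-1}}a_i a_j$ for some ordering of the $A_i$s. In either route the decisive input is a sharp understanding of how the single $+1$ from each triple spanning three distinct classes is distributed among the three candidate edges; the crude bound $m_{uv} \leq m - a_i - a_j$ is already too weak at $k = 3$. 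I expect this will require a Karamata-style majorisation argument in the spirit of Section \ref{upperproof}, combined with the extremality-enforced structure given by Lemma \ref{lemma:unique-choice}. I would first attempt the $\alpha = 2$ case (i.e.\ Conjecture \ref{conj}), both for numerical tractability and because the structural results of Section \ref{uppersection} are sharpest there, and only then seek the extension to general $\alpha \geq 2$.
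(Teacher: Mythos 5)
This is a \emph{conjecture}, not a theorem: the paper offers no proof of it, and Section \ref{closingremarks} explicitly frames both Conjecture \ref{conj} and its $\alpha\ge 2$ generalisation as open. So there is no ``paper's own proof'' for you to match; the question is only whether your proposal closes the gap. It does not, and you say so yourself.

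Your plan is sound up to a point, and it tracks the very route the authors suggest in the last paragraph of the paper: show that some extremal TCM has exactly two maximal closed sets, hence is $2$-recursive, hence $H(m,\alpha)\le H_2(m,\alpha)$. Your extension of Proposition \ref{prop:sizeatleast2} to $\alpha\ge 2$ is plausible --- Lemmas \ref{lemma:unique-choice} and \ref{prop:swap} do generalise for $\alpha>1$ with the same algebra, and this is implicitly what Theorem \ref{generalupper} already relies on --- and your $k=2$ reduction (closed-set decomposition plus extremality forcing $\mathcal{G}[A_i]$ to be extremal, then Lemma \ref{hrbrecurrence}) is correct.

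The genuine gap is the step from ``all maximal closed sets have size $\ge 2$'' to ``there are exactly two maximal closed sets,'' i.e.\ your Case $k\ge 3$. Neither of your two candidate routes is carried out. For the merging route, you correctly observe that Lemma \ref{lemma:unique-choice} cuts against you (each individual redirection onto a cross-edge of multiplicity $0$ is weakly \emph{decreasing}), so the inequality $w(\mathcal{G}')\ge w(\mathcal{G})$ must come from an aggregate gain on the $a_ia_j$ newly-closed cross-edges outweighing the losses on all the reassigned edges into other classes --- and you give no estimate for either side. For the second route, you note that the crude bound $m_{uv}\le m-a_i-a_j$ on cross-edge multiplicities is already too weak at $k=3$; Lemma \ref{vertexcontribution} gives a sharper Karamata-type bound on $\sum_{u\notin A_i}\alpha^{m_{uv_i}}$, but that bound is tailored to the upper-bound computation in Theorem \ref{thm:upperbds} and does not, as stated, line up term-by-term with the iterated $H_2$ recurrence you write down. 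Absent one of these estimates, the proposal is an outline of a strategy with the decisive inequality missing, not a proof.

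To be clear about status: the paper proves only the $2$-recursive side ($H_2$) and an upper bound on $H$ whose leading constant $\tfrac{\alpha^6+\alpha^4+\alpha+1}{\alpha^6(\alpha^2-1)}$ in Theorem \ref{generalupper} does not match $\lambda(\alpha)$ exactly; the conjectured equality $H=H_2$ is precisely what would close that gap. Your write-up is a reasonable plan of attack consistent with the authors' own proposed approach, but it should be presented as that --- a plan with an identified open step --- not as a proof.
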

Observe that $\frac{\alpha^6+\alpha^4+\alpha+1}{\alpha^6(\alpha^2-1)}\sim\alpha^{-2}$ and $\lambda(\alpha)\sim\alpha^{-2}$ as $\alpha\to\infty$, which lends some support for our conjecture. One possible approach towards proving this conjecture is to show that for every $[m]$, there exists an extremal TCM on $[m]$ with exactly two maximal closed sets, as that would imply that there is always an extremal TCM that is 2-recursive. To this end, it seems possible to use similar arguments to those in the proof of Proposition \ref{prop:sizeatleast2}, to either show that every vertex in an extremal TCM must be contained in a closed set of increasingly larger size, or show that if an extremal TCM is partitioned into at least three closed sets, at least two of them can be combined into a larger closed set.  

In the cases when $r\geq 4$ and $1<\alpha<2$, the arguments proving Theorem \ref{generalupper} fail at some point and we do not have a good conjecture on when $\forb(m,r,M),H(m,\alpha)$ are attained, although part of the argument proving Theorem \ref{generallower} still works for sufficiently large $m$, so it is plausible that we still have $\lim_{m\to\infty}2H_2(m,\alpha)/m\alpha^m=\lambda(\alpha)$. The values of $H(m,\alpha)$ and $H_2(m,\alpha)$ when $\alpha<1$ are also of independent interest, but we haven't investigated them much as they do not correspond to forbidden matrix problems. 

\bibliographystyle{abbrv}
\bibliography{Citations.bib}

\begin{thebibliography}{10}

\bibitem{ALON}
N.~Alon.
\newblock On the density of sets of vectors.
\newblock {\em Discrete Mathematics}, 46(2):199--202, 1983.

\bibitem{survey}
R.~Anstee.
\newblock A survey of forbidden configuration results.
\newblock {\em The Electronic Journal of Combinatorics}, 20(1), 2013.

\bibitem{ANSTEE2023113492}
R.~Anstee, J.~Dawson, L.~Lu, and A.~Sali.
\newblock Multivalued matrices and forbidden configurations.
\newblock {\em Discrete Mathematics}, 346(9):113492, 2023.

\bibitem{DillonSali2021_ExponentialMultivalued}
T.~Dillon and A.~Sali.
\newblock Exponential multivalued forbidden configurations.
\newblock {\em Discrete Mathematics {\&} Theoretical Computer Science}, 23(1), 2021.

\bibitem{ELLIS202024}
K.~Ellis, B.~Liu, and A.~Sali.
\newblock Multi-symbol forbidden configurations.
\newblock {\em Discrete Applied Mathematics}, 276:24--36, 2020.

\bibitem{FS}
Z.~F{\"u}redi and A.~Sali.
\newblock Optimal multivalued shattering.
\newblock {\em SIAM Journal on Discrete Mathematics}, 26:737--744, 2012.

\bibitem{JovanKaramata1932}
J.~Karamata.
\newblock Sur une inegalite relative aux fonctions convexes.
\newblock {\em Publications de l'Institut Mathématique}, 1(1):145--148, 1932.

\bibitem{Sauer}
N.~Sauer.
\newblock On the density of families of sets.
\newblock {\em Journal of Combinatorial Theory, Series A}, 13:145--147, 1972.

\bibitem{Shelah}
S.~Shelah.
\newblock A combinatorial problem: Stability and order for models and theories in infinitary language.
\newblock {\em Pacific Journal of Mathematics}, 41:247--261, 1972.

\bibitem{VC}
V.~Vapnik and A.~Chervonenkis.
\newblock On the uniform convergence of relative frequencies of events to their probabilities.
\newblock {\em Theory of Probability and Its Applications}, 16:264--280, 1971.

\end{thebibliography}
\end{document}